\documentclass[reqno,11pt]{amsart}
\usepackage{color}

 \usepackage{mathtools}

\newtheorem{theorem}{Theorem}[section]
\newtheorem{lemma}[theorem]{Lemma}
\newtheorem{corollary}[theorem]{Corollary}

\theoremstyle{definition}
\newtheorem{assumption}[theorem]{Assumption}

\theoremstyle{remark}
\newtheorem{remark}[theorem]{Remark}

 \makeatletter 
 \def\dashint{\operatorname{\,\,\,\mathclap{\int} \kern-.23em\text{\bf--}\!\!}}

\def\dashnorm{\,\,\text{\bf--}\kern-.5em\|}
\def\ninf{\qopname\relax\@empty{inf\phantom{p}\!\!\!}}

 \makeatother

\def\sft{{\sf t}}

\newcommand\bC{\mathbb{C}}

\newcommand\bR{\mathbb{R}}
\newcommand\bS{\mathbb{S}}

\newcommand\cB{\mathcal{B}}

\newcommand\cF{\mathcal{F}}

\newcommand \cL{\mathcal{L}}

\newcommand\cN{\mathcal{N}}
 
\newcommand\cS{\mathcal{S}}

\newcommand\frT{\mathfrak{T}}

\newcommand{\tr}{{\rm tr}\,}

\newcommand{\loc}{{\rm loc}\,}

 \newcommand{\mysection}[1]{\section{#1}
 \setcounter{equation}{0}}

\newcommand{\esssup}{\operatornamewithlimits{ {ess\,sup}}}

\def\bLpqloc{$b\in L_{(p,q),\loc}$}

\begin{document}

\title{Diffusion processes with \protect\bLpqloc}
\author[]{N.V. Krylov}
\address{School of Mathematics, University of Minnesota, Minneapolis, MN, 55455}
\email{nkrylov@umn.edu}

\subjclass{60J60, 60H10}

\keywords{Singular drift, strong Markov,
diffusion processes, time-inhomogeneous
processes, measurable coefficients.}

\begin{abstract}
 We present some conditions in case
$b\in L_{(p,q),\loc}$ sufficient to
guarantee the existence of a 
strong Markov diffusion process
with drift $b$ and uniformly nondegenerate
bounded matrix-valued diffusion. An example
is given to illustrate how close these conditions are to be necessary.
\end{abstract}

\maketitle

\mysection{Introduction}

Let $\bR^{d}$ be a Euclidean space of
points $x=(x^{1},...,x^{d})$ and set
$\bR^{d+1}=\{(t,x):t\in\bR,x\in\bR^{d}\}$,
$$
D_{i}=\frac{\partial}{\partial x^{i}},\quad
D_{ij}=D_{i}D_{j},\quad \partial_{t}=
\frac{\partial}{\partial t}.
$$
The goal of this article is to present some new
results guaranteeing the existence of a strong
Markov diffusion process corresponding to the  
parabolic operator
\begin{equation}
                           \label{6.10.1}
\cL u(t,x)=\partial_{t}u(t,x)+(1/2)a^{ij}(t,x)D_{ij}u(t,x)+b^{i}(t,x)D_{i}u(t,x), 
\end{equation}
where 

(i) $a(t,x)=(a^{ij}(t,x))$ is a Borel
function on $\bR^{d+1}$ 
with values in the set  $\bS_{\delta}$, which is the set of  $d\times d$ symmetric
matrices, whose eigenvalues are in $[\delta,
\delta^{-1}]$ and $\delta\in(0,1]$ is a number
fixed throughout the paper,

(ii) $b(t,x)=(b^{i}(t,x))$ is a Borel
$\bR^{d}$-valued 
function on $\bR^{d+1}$. 

The connection between the theory of
elliptic and parabolic second-order
equations and the theory of Markov diffusion
processes is very well established and revealed
by many authors like Kolmogorov, Gikhman, Dynkin, It\^o to name a few. It\^o invented
a powerful tool of making this connection even deeper by introducing It\^o stochastic equations allowing one to view the trajectories of
the Markov diffusion processes as solutions
of It\^o's equations. He himself considered
equations only with Lipschitz coefficients.
Skorokhod made a big leap forward coming up
with a general idea which allowed him to treat
It\^o's equations with only continuous coefficients without any nondegeneracy condition on $a$. The author used his method
and proved the solvability of It\^o's equations
for measurable bounded $a,b$ when $a$ is uniformly nondegenerate. Generally, for such coefficients there is no uniqueness of
distributions of  solutions coming from the same initial point.  Still, it turns out that
one can choose solutions starting from any
initial point in such a way that the resulting
process becomes strong Markov, see, for instance, \cite{Kr_73_1},  \cite{AP_77}, \cite{GM_01}, \cite{Kr_25}.
A different approach to constructing
strong Markov diffusion processes is suggested
in \cite{Ba_98}. The widest class of coefficients
for which the strong Markov diffusion processes were constructed
so far is found in \cite{Kr_25}, \cite{Kr_26}
(although in \cite{AP_77} the authors treat
the operators $\cL$ containing integro-differential part, so  that the corresponding
Markov process has jumps).

It is  probably  worth noting the area of research in
the theory of It\^o's equation when all solutions (perhaps distinct)
starting from the same point have the same
distribution. In that case the solutions are
automatically Markov processes. The classical
source of such results is \cite{SV_79}
in case $a,b$ bounded and $a$ is uniformly nondegenerate and is continuous
in $x$ uniformly with respect to $t$. In
case $a=(\delta^{ij})$ this result was recently essentially covered in \cite{RZ_20}
where   $b$ is allowed to satisfy the Ladyzhenskaya-Prodi-Serrin condition
instead of just being bounded as in \cite{SV_79}. This condition
is substantially weekend in
 the series \cite{Ki_24}, \cite{Ki_25},
\cite{KM_22} where $b$ is in a Morrey class. Further results in this direction, which
is tangential to the main subject of the
present paper, can be found
in \cite{Kr_26}, \cite{XXZZ_20}.

For  $p,q\in(1,\infty)$ denote by $L_{(p,q)}$ the space of functions $f(t,x)$
on $\bR^{d+1}$ such that
$$
\|f\|_{L_{(p,q)}}=\begin{cases}
\Big(\int_{\bR}
\Big(\int_{\bR^{d}}|f(t,x)|^{p}\,dx\Big)^{q/p}
\,dt\Big)^{1/q}<\infty\quad\text{if}\quad p\geq q;\\
\Big(\int_{\bR^{d}}
\Big(\int_{\bR}|f(t,x)|^{q}\,dt\Big)^{p/q}
\,dx\Big)^{1/p}<\infty\quad\text{if}\quad p\leq q.
\end{cases}
$$
By $L_{(p,q)}(\Gamma)$ we mean the set
of $f$ on $\Gamma$ such that $fI_{\Gamma}\in  L_{(p,q)}$.
 
Everywhere below we suppose that fixed $p,q\in(1,\infty)$ satisfy
$$
\frac{1}{p}+\frac{1}{q}\leq 1.
$$

Let $\Omega$ be the set of $\bR^{d+1}$-valued
 continuous function $(t_{0}+t,x_{t})$, $t_{0}\in \bR$,
defined for $t\in[0,\infty)$.
For $\omega=\{(t_{0}+t,x_{t}),t\geq0 \}$, define
$\sft_{t}(\omega)=t_{0}+t$, $x_{t}(\omega)=x_{t}$,
and set $ \cN_{t}=\sigma((\sft_{s},x_{s}),s\leq t)$,
$  \cN_{\infty}= \sigma((\sft_{s},x_{s}),s< \infty)$. Denote by $\frT$ the set of {\em bounded\/} stopping times
relative to $\{\cN_t\}$.  

Assume that for each $(t,x)\in\bR^{d+1}$ we are
given a probability measure $P_{t,x}$ on
$(\Omega,\cN_{\infty})$. Recall that  the set
$X=((\sft_{\cdot},x_{\cdot}),\cN_{t},P_{t,x})$
is 
a strong Markov process if

(i) for each $A\in \cN_{\infty}$ the function
$P_{t,x}(A)$ is Borel, and $P_{t,x}\big((\sft_{0},x_{0})=(t,x)\big)=1$.

(ii) for each $\tau\in\frT$, $A\in\cN_{\tau}$,
$s\geq0$, Borel $\Gamma\subset \bR^{d+1}$,
$(t,x)\in\bR^{d+1}$ we have
$$
P_{t,x}(A,(\sft_{\tau+s},x_{\tau+s})\in\Gamma)
=E_{t,x}I_{A}P_{\sft_{\tau },x_{\tau }}(
(\sft_{s},x_{s}\in \Gamma),
$$
where $E_{t,x}$ is the symbol of the expectation with respect to $P_{t,x}$.

We call $X$ a {\em strong Markov $\bR^{d+1}$-valued diffusion process
corresponding to $\cL$ (or to $a,b$)\/} if,
for any $(t,x)\in \bR^{d+1}$
\begin{equation}
                                                     \label{4.27.5}
P_{t,x}\Big(
 \int_{0}^{T}|b(\sft_{s},x_{s})|\,ds<\infty,\quad\forall T<\infty\Big)=1
\end{equation}
and the process
$$
\eta_{t}(u)=u(\sft_{t},x_{t})-\int_{0}^{t}\cL u(\sft_{s},x_{s})\,ds-u(\sft_{0},x_{0})
$$
is a local martingale relative to $\{\cN_{t}\}$ for all $u\in C^{\infty}
_{0}(\bR^{d+1})$. In the terminology of
Stroock-Varadhan $X$ is a solution of a martingale
problem.

\begin{remark}
                         \label{remark 8.5.1}

 Owing to 
Lemma 3.4.1 of \cite{Kr_25},  if $X$ is a  strong Markov $\bR^{d+1}$-valued diffusion process
corresponding to $\cL$, then
for any $(t_{0},x_{0})\in\bR^{d+1}$
there exists a $d$-dimensional Wiener process $w_{t}$, $t\geq0$,
which is a Wiener process relative to $\bar \cN_{t}$,
where $\bar \cN_{t}$ is the completion
 of $\cN_{t}$
with respect to all $P_{s,y}$, and such that with 
$P_{t_{0},x_{0}}$-probability one, for
all $s\geq 0$  
\begin{equation} 
                             \label{4.27.10}
x_{s}=x_{0}+\int_{0}^{s}\sigma(t_{0}+u,x_{u})\,dw_{u}
+\int_{0}^{s}b(t_{0}+u,x_{u})\,du,\quad \sft_{s}=t_{0}+s,
\end{equation}
where $\sigma=\sqrt a$.
\end{remark}
Here is part of
Theorem 1.9.1 of \cite{Kr_26}. 

\begin{theorem}
                    \label{theorem 6.19.10}
Suppose that $b\in L_{(p,q)}$. Then there exists a strong Markov $\bR^{d+1}$-valued diffusion process
$X =((\sft_{t},x_{t}),\cN_{t},P _{t,x})$  corresponding to $a,b$. 
\end{theorem}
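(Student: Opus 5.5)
The plan is to follow the standard route from \cite{Kr_73_1}, \cite{Kr_25}: approximate $b$ by smooth bounded drifts, obtain estimates that are uniform in the approximation, and then pass to the limit. The strong Markov selection is done along the way by the Krylov--Nisio procedure.

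\emph{Step 1: approximation.} Take $b_n=b I_{|b|\le n}$, mollified, so that $b_n\to b$ in $L_{(p,q)}$ and a.e. Smooth $a$ as well, obtaining $a_n\in\bS_\delta$ with $a_n\to a$ a.e. For these smooth bounded coefficients there is a unique strong Markov diffusion $P^n_{t,x}$.

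\emph{Step 2: the uniform Krylov estimate, which is the main obstacle.} I would aim to show that, for $f\ge0$ and any $T$,
\begin{equation*}
E^n_{t,x}\int_0^T f(\sft_s,x_s)\,ds\le N\|f\|_{L_{(p,q)}},
\end{equation*}
with $N$ independent of $n,t,x$ and depending only on $d,\delta,p,q,T$ and the function $\|b\|_{L_{(p,q)}}$. The condition $1/p+1/q\le1$ is exactly what is needed for this. Note that this is not the subcritical Ladyzhenskaya--Prodi--Serrin condition $d/p+2/q<1$, so smallness of $b$ on small parabolic cylinders is not available. What I would use instead is that $\|b I_{[s,s+h]}\|_{L_{(p,q)}}\to0$ (or the analogous tail/cut-off in $x$ when $p\le q$), so that $b$ has small norm on slabs of the appropriate type. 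I would first prove the estimate for $b=0$ from parabolic $W^{1,2}_{(p,q)}$-type (Aleksandrov/Krylov) estimates with measurable $a$. I would then treat the drift via Girsanov-free iteration: on a random time interval where $\int|b_n|\,ds$ along the path is small, estimate $\int|b_n(\sft_s,x_s)|\,ds$ by the $b=0$ estimate applied to $|b_n|$, and concatenate finitely many such intervals using the Markov property. The number of intervals is controlled by absolute continuity of the $L_{(p,q)}$ norm. I expect this to be the hardest part, and I would rely on the machinery of \cite{Kr_25} wherever possible.

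\emph{Step 3: tightness and limit.} From Step 2 with $f=|b_n|$ we get $E^n\int_0^T|b_n|\,ds\le N$. Together with boundedness of $a_n$, this gives tightness of $\{P^n_{t,x}\}$ on $\Omega$ by standard moment bounds on the increments of $\int b_n\,ds$ and of the martingale part. The estimate passes to limit points for continuous $f$, and then for all Borel $f$ by monotone class, giving (\ref{4.27.5}). To identify $\eta_t(u)$ as a local martingale in the limit, I would replace $b_n,a_n$ by fixed continuous approximations $\tilde b,\tilde a$, with the error controlled uniformly in $n$ through the Krylov estimate applied to $|b_n-\tilde b|$ and $|a_n-\tilde a|$ (the latter bounded, localized).

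\emph{Step 4: strong Markov selection.} The set $K(t,x)$ of such limit solutions that satisfy the Krylov estimate with the fixed constant is compact and convex, measurable in $(t,x)$, and stable under conditioning and concatenation at stopping times, using the estimate together with the Markov property. The Krylov--Nisio maximization of $E\int_0^\infty e^{-\lambda s}f(\sft_s,x_s)\,ds$ over a countable dense family then selects a strong Markov family, exactly as in \cite{Kr_25}.
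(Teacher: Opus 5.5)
The paper does not prove this theorem; it quotes it from \cite{Kr_26}. What it shows of that proof (Section~\ref{section 8.2.1}, Theorem~\ref{theorem 9.6.4}, Section~\ref{section 9.12.1}) has three ingredients. First, conditional a priori estimates valid for \emph{every} solution of \eqref{4.27.10} on any probability space. Second, a Skorokhod-type compactness theorem for arbitrary solutions, in the sense of finite-dimensional distributions. Third, the selection procedure of \cite{Kr_73_1}. Your Steps 1--3 fit this picture, but Step~4 has a genuine gap, exactly where that structure is needed.

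You take $K(t,x)$ to be limit solutions satisfying the Krylov estimate with a fixed constant $N$, and you assert stability under conditioning and concatenation ``using the estimate together with the Markov property''. Neither property follows.
\begin{itemize}
\item \emph{Conditioning.} An unconditional bound $E^{P}\int_0^T f(\sft_s,x_s)\,ds\le N\|f\|_{L_{(p,q)}}$ says nothing about the regular conditional distributions of the shifted path given $\cN_\tau$. These may violate the bound on a set of positive probability.
\item \emph{Pasting.} Paste $P\in K(t,x)$ at $\tau$ with $Q_{\sft_\tau,x_\tau}\in K(\sft_\tau,x_\tau)$. Splitting $\int_0^T=\int_0^{\tau\wedge T}+\int_{\tau\wedge T}^T$ only gives the bound $2N\|f\|_{L_{(p,q)}}$, so the defining constant is not reproduced.
\item \emph{Markov property.} The Markov property you used in Step~2 belongs to the smooth approximations $P^n_{t,x}$, not to elements of $K(t,x)$. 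Also, if ``limit solutions'' means limit points of $P^n_{t,x}$, that set is not even convex.
\end{itemize}

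The missing idea is that the estimates must be a priori and conditional for an \emph{arbitrary} solution. For any solution on any filtered probability space and any stopping time $\tau$, you need $E_{\cF_\tau}\int_0^T|b(\sft_{\tau+s},x_{\tau+s})|\,ds\le N$ and the corresponding conditional Krylov estimate, with $N$ independent of the solution.

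You can get this by running your Step~2 bootstrap with conditional expectations at stopping times instead of the Markov property. Start from Krylov's estimate for general It\^o processes with the discount $\exp\big(-c\int_0^t|b(\sft_s,x_s)|\,ds\big)$; compare the proof of Lemma~\ref{lemma 7.27.2}. The driftless estimate by itself does not control a process with drift $b_n$, and $W^{1,2}$-theory for measurable $a$ is not available to absorb the drift via It\^o's formula.

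With such estimates, $K(t,x)$ can be the set of \emph{all} solutions satisfying \eqref{4.27.5}. That set is automatically convex and stable under conditioning and pasting, and its compactness comes from the a priori estimates, as in Theorem~\ref{theorem 9.6.4}(ii).

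Two smaller points.
\begin{itemize}
\item A bound on $E\int_0^T|b_n|\,ds$ with only a non-quantitative modulus in $h$ does not give path-space tightness by moment bounds. You need an Aldous-type argument built on the same conditional estimates. Note that \cite{Kr_26} asserts tightness only for $p\ge q$, and Theorem~\ref{theorem 9.6.4} is stated for finite-dimensional distributions.
\item The $L_{(p,q)}$ Krylov estimate requires $d/p+1/q\le1$, as in \eqref{7.29.2} and the factor $\hat b_{\rho}^{d/(p-d)}$ in Lemma~\ref{lemma 7.27.2}. Under $1/p+1/q\le 1$ alone it already fails for Brownian motion with $p=q=2$, $d\ge2$. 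So your claim that this condition is ``exactly what is needed'' is false as written; the displayed condition should be read as $d/p+1/q\le1$.
\end{itemize}
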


Our goal in this article is to localize
the condition that $b\in L_{(p,q)}$.
Note that this condition does not allow $b$
to be bounded, but admits bounded $b$ with compact support, which however is enough for many
applications.

If $\Gamma$ is a Borel subset of $\bR^{d}$
or $\bR$ by $|\Gamma|$ we mean its Lebesgue
measure and for suitable $f$ denote
$$
\dashint_{\Gamma}f\,dx=\frac{1}{|\Gamma|}
\int_{\Gamma}f\,dx.
$$
If $\Gamma=(S,T)\times B$, where $B$ is a ball
in $\bR^{d}$, and $p\geq q$ we set
$$
\dashnorm f\|_{L_{(p,q)}(\Gamma)}=
\Big(\dashint_{(S,T)}\Big(\dashint_{B}|f|^{p}
\,dx\Big)^{q/p}\,dt\Big)^{1/q}.
$$
If $p<q$,  the order of integration is,
naturally, reversed in this definition.

For $ \rho>0$ and $(t,x)\in\bR^{d+1}$
define $C_{ \rho}(t,x)=[t,t+\rho^{2})\times
B_{\rho}(x)$, where $B_{\rho}(x)=\{y\in\bR^{d}:
|y-x|<\rho\}$, $C_{\rho}=C_{\rho}(0,0)$,
$B_{\rho}=B_{\rho}(0)$. Let $\bC_{ \rho}$ be
 the collection of $C_{ \rho}(t,x)$, $\bC=\{\bC_{ \rho},\rho>0\}$, and
$$
\hat b_{\rho}:=\sup_{C\in \bC_{ \rho}}
\dashnorm b\|_{L_{(p,q)}(C)}.
$$

Here is our first main result
proved in Section \ref{section 9.12.1}. 
\begin{theorem}
                  \label{theorem 6.19.1}
There exists
$\hat b>0$, depending only on $d,\delta,p,q$,
such that if, for some  $R_{0}\in[0,\infty)$,
\begin{equation}
                             \label{7.18.1}
\hat b_{ R_{0}}\leq 
\hat b  R_{0}^{ -1},
\end{equation}
 then
there exists a strong Markov $\bR^{d+1}$-valued diffusion process
$X=((\sft_{t},x_{t}),\\ \cN_{t},P_{t,x})$   corresponding to $a,b$.
\end{theorem}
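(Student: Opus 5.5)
Note first that $R_0=0$ makes \eqref{7.18.1} vacuous as written, so I read it with $R_{0}>0$. By parabolic scaling $(t,x)\to(R_0^2t,R_0x)$, under which $b\to R_0 b$ and $\rho\hat b_\rho$ is invariant, it suffices to take $R_0=1$. The condition then reads $\hat b_1\le \hat b$: the averaged $L_{(p,q)}$ norm of $b$ over every unit cylinder is at most $\hat b$.

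The plan is to localize Theorem \ref{theorem 6.19.10} and glue. For $n=1,2,\dots$ set $b_n=bI_{C^n}$, where $C^n=(-n,n)\times B_n$. Each $b_n$ lies in $L_{(p,q)}$, since it is a finite sum of unit-cylinder pieces, each with norm at most $\hat b$. Theorem \ref{theorem 6.19.10} then gives strong Markov processes $X^n$ corresponding to $a,b_n$. We cannot simply take a limit, because Markov selections need not be consistent across $n$. Instead I would rerun the construction behind Theorem \ref{theorem 6.19.10} (Krylov's Markov selection scheme from \cite{Kr_25}, \cite{Kr_26}) directly for $b$. That scheme needs three inputs. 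First, the set of solutions of the martingale problem from every $(t,x)$ is nonempty. Second, this set is compact. Third, it is stable under conditioning and pasting at stopping times. The last property is formal and does not depend on integrability of $b$. So the task reduces to existence together with tightness/compactness estimates for solutions with the locally controlled drift $b$.

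The key analytic input is a Krylov-type estimate uniform over unit cylinders. For any solution $x_\cdot$ with drift $\tilde b$ satisfying $|\tilde b|\le |b|$ and any $f$ supported in a unit cylinder $C$, I want
\[
E\int_0^{\tau}|f(\sft_s,x_s)|\,ds\le N\|f\|_{L_{(p,q)}(C)},
\]
where $\tau$ is the exit time from a slightly larger cylinder and $N=N(d,\delta,p,q)$. This should follow from the estimates underlying Theorem \ref{theorem 6.19.10} (which hold when $\|b\|_{L_{(p,q)}}$ is small) applied to $bI_{C'}$, where $C'$ is a cylinder of size comparable to $1$. Smallness comes from $\hat b$ being small, since $\|bI_{C'}\|_{L_{(p,q)}}\le N(d)\hat b$. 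Here $p,q$ lie in the critical or subcritical range $1/p+1/q\le 1$, and the smallness on unit scale is exactly what makes the perturbation argument (iterating $E\int|b||Du|$-type bounds) close. The main obstacle is that smallness only at scale $1$ must give control at all smaller scales. The relevant quantity is $\rho\hat b_\rho$ for $\rho\le1$, and by Hölder, $\rho\hat b_\rho\le N\rho^{1-d/p-2/q}\hat b_1$ is not small when $d/p+2/q>1$. I would therefore hope that the cited estimates only need smallness of the global norm on unit-size pieces, i.e.\ in the exact normalization of \eqref{7.18.1}. If that fails, I would try requiring $\hat b$ small enough that a Morrey-type bound propagates down scales.

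From the local estimate I would derive two consequences. Chaining over the successive unit time-space cells the path visits gives $E\int_0^T|b(\sft_s,x_s)|\,ds\le N(T)$ uniformly in the starting point. To see this, cover by cells and use the strong Markov/pasting property. Boundedness of $\sigma$, together with the exponential moments of $\int|b|$ obtained via Khasminskii's lemma, controls the number of cells crossed. This uniform bound yields tightness and \eqref{4.27.5}. Existence then follows by solving with truncated drifts $b_n$ and passing to the limit along subsequences, identifying the martingale problem via the local estimate, which lets one replace $b$ by continuous approximations. The same argument shows closedness of the solution sets. The selection procedure then produces the strong Markov family $P_{t,x}$.
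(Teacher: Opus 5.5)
Your plan is essentially the paper's proof. The paper rescales to $R_0=1$ via $b^R(t,x)=Rb(R^2t,Rx)$, then reruns the Markov-selection argument of Theorem~1.9.1 of \cite{Kr_26} with the existence/compactness step replaced by Theorem~\ref{theorem 9.6.4}, which rests on local Krylov estimates, a non-explosion (exit-time) bound and Khasminskii's lemma, i.e.\ the ingredients you list.

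Your ``main obstacle'' does not arise, and your fallback would not have worked. The local estimates (Theorem~1.1.12 of \cite{Kr_26}, used in Lemma~\ref{lemma 7.27.2} and Theorem~\ref{theorem 7.29.1}) need only $\hat b_\rho<\infty$, with constants depending on $\rho$ and $\hat b_\rho$, so no control of $\rho\hat b_\rho$ at small scales is required. When $d/p+2/q>1$, no choice of $\hat b$ could give such control, so a Morrey-type propagation down scales is not available. The smallness of $\hat b$ is used only for the uniform exit-time bound $E_{\cF_\tau}e^{-\lambda_0\tau_1}\le\nu_0$ of Lemma~\ref{lemma 7.18.1}, and that bound is exactly what makes your ``number of cells crossed'' step work.
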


\begin{remark}
                         \label{remark 8.5.2}
If $b$ is bounded, to satisfy \eqref{7.18.1}
with any given $\hat b$  
it suffices to take  $R_{0}$ small enough.
In the time-homogeneous case 
with bounded $b$ Theorem
\ref{theorem 6.19.1}  was obtained in \cite{Kr_73_1}.  
If $b$ is bounded, but there also
jumps, the result is found in \cite{AP_77}
and if $p=q$ in \cite{GM_01}.

Also observe that, if $b\in L_{(p,q)}$
(as in Theorem \ref{theorem 6.19.10}), to satisfy \eqref{7.18.1}
with any given $\hat b$  
observe that for $C\in \bC_{\rho}$
$$
\dashnorm b\|_{L_{(p,q)}(C)}=N(d,p,q)\|
b\|_{L_{(p,q)}(C)}\rho^{-d/p-2/q}.
$$
If $d/p+2/q>1$, it suffices to take
$R_{0}$ big enough, whereas, if $d/p+2/q\leq1$, it suffices to take
$R_{0}$ small enough when, even if
$d/p+2/q=1$, \eqref{7.18.1} is valid because
$$
\lim_{\rho\downarrow0}\sup_{C\in\bC_{\rho}} \|b\|_{L_{(p,q)}(C)}= 0.
$$
\end{remark}

\begin{remark}
                        \label{remark 9.12.1}
Generally, the process $X$ is not unique
or Feller
even if $a$ is the unit matrix (see, for instance, \cite{Kr_26_1}).

\end{remark}

Next, we relax condition \eqref{7.18.1}
to allow $|b|$ to grow ``linearly'' as
$|x|\to\infty$. 

Fix a constant $K_{0}\in(0,1]$ and set
$$
\hat\rho_{0}(|x|)=\begin{cases}K_{0}\quad\text{for}
\quad |x|\leq e,\\
K_{0}(|x|\ln |x|)^{-1} \quad\text{for}
\quad |x|> e.
\end{cases}
$$
\begin{theorem}
                  \label{theorem 6.19.01}
There exists
$\hat b>0$, depending only on $d,\delta,p,q$,
such that if,  for all  $(t,x)\in\bR^{d+1}$
and a finite function $\hat \rho=\hat\rho(|x|)$, satisfying
\begin{equation}
                           \label{8.15.6}
  \hat\rho(|x|)\geq \hat\rho_{0}(|x|),
\end{equation}
we have
\begin{equation}
                           \label{8.13.1}
\dashnorm b\|_{L_{(p,q)}(C_{  
\hat\rho}(t,x))}\leq \hat b \hat\rho^{-1},
\end{equation}
where $\hat\rho =\hat\rho(|x|)$, 
 then
there exists a strong Markov $\bR^{d+1}$-valued diffusion process
$X=((\sft_{t},x_{t}),\cN_{t},P_{t,x})$   corresponding to $a,b$.
\end{theorem}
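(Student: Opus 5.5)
Our plan is to reduce Theorem \ref{theorem 6.19.01} to Theorem \ref{theorem 6.19.1} by localization in space, followed by a non-explosion estimate. For $n=1,2,\dots$ we set $b_{n}=bI_{|x|<n}$. For $|x|\le n+1$ the radius $\hat\rho(|x|)\ge\hat\rho_{0}(n+1)=:r_{n}>0$, and we choose $R_{n}\le r_{n}$, say $R_{n}=r_{n}$. The first step is to check that $b_{n}$ satisfies \eqref{7.18.1} with $R_{0}=R_{n}$, possibly with $\hat b$ replaced by $N\hat b$. The point is that any cylinder $C\in\bC_{R_{n}}$ meeting $\{|x|<n\}$ has its base point $y$ with $|y|\le n+1$. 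Comparing $C$ with the larger cylinder $C_{\hat\rho(|y|)}(y)$ alone is not enough, because averages over the smaller set carry a factor $(\hat\rho/R_{n})^{d/p+2/q}$. The first thing to try is therefore to use \eqref{8.13.1} at points $z$ near $y$ with cylinders of radius $\hat\rho(|z|)$. If $\hat\rho(|z|)$ is comparable to $R_{n}$ we cover $C$ by boundedly many of them. In general we would instead restrict the allowed $\hat\rho$: we may replace $\hat\rho$ by $\hat\rho_{0}$, provided the averaged condition passes from larger to smaller cylinders. This step needs care, and we expect the paper to handle it via the lower bound \eqref{8.15.6} together with finiteness of $\hat\rho$, so that $R_{n}$ may be taken as $\inf_{|x|\le n+1}\hat\rho$, up to constants. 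Theorem \ref{theorem 6.19.1} then yields strong Markov processes $X^{n}=((\sft_{t},x_{t}),\cN_{t},P^{n}_{t,x})$ corresponding to $a,b_{n}$.

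The second step is a uniform non-explosion estimate, which we expect to be the main obstacle. Let $\tau_{m}$ be the first exit time of $x_{t}$ from $B_{m}$. We need
\[
\lim_{m\to\infty}\sup_{n\geq m}P^{n}_{t,x}(\tau_{m}\le T)=0
\]
for all $T$ and all $(t,x)$. We would use the Lyapunov function $V(x)=\ln\ln(|x|^{2}+e^{2})$. The choice $\hat\rho_{0}\sim(|x|\ln|x|)^{-1}$ is designed so that, on a cylinder of radius $\hat\rho$ at $x$, the drift acts like a bounded-by-$\hat b\hat\rho^{-1}\sim |x|\ln|x|$ quantity in an averaged sense, and $|DV|\sim(|x|\ln|x|)^{-1}$. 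Thus $b\cdot DV$ is controlled, though only in the averaged sense. To handle this we would use the estimates from the proof of Theorem \ref{theorem 6.19.1}, namely Krylov-type estimates for the processes $X^{n}$ localized to cylinders $C_{\hat\rho}(t,x)$:
\[
E\int_{0}^{\gamma}|b(\sft_{s},x_{s})|\,ds\le N\hat b\hat\rho^{-1}\hat\rho^{2}=N\hat b\hat\rho ,
\]
where $\gamma$ is the exit time from $C_{\hat\rho}(t,x)$. These show that during each exit from a small cylinder, $V$ changes by at most a bounded amount in expectation, with time spent of order $\hat\rho^{2}$. Summing over successive cylinder exits, via the strong Markov property, gives
\[
E\,V(x_{t\wedge\tau_{m}})\le V(x)+NT ,
\]
uniformly in $n$. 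This in turn gives $P^{n}(\tau_{m}\le T)\le (V(x)+NT)/\ln\ln m$.

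In the third step we pass to the limit. The laws $P^{n}_{t,x}$ agree with a solution for $b$ before $\tau_{n}$. Following the construction of \cite{Kr_25}, we would either take limits of the $P^{n}$, using tightness from the estimate above together with the fact that the martingale problem up to $\tau_{m}$ is satisfied for $n\ge m$, or better, build $X$ directly by the strong Markov selection procedure of \cite{Kr_73_1}, \cite{Kr_25} applied to the family of solutions, which is nonempty and compact by steps 1 and 2. Property \eqref{4.27.5} follows from non-explosion and the local integrability bound above. The hardest point is making the averaged control of $b\cdot DV$ rigorous uniformly in $n$, which is why the logarithmic factor in $\hat\rho_{0}$ is needed.
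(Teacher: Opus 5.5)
Your plan has two genuine gaps, in Steps~1 and~3.

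\textbf{Step~1 fails in the generality of the theorem.} If $r\le\hat\rho=\hat\rho(|y|)$, then $C_r(t,y)\subset C_{\hat\rho}(t,y)$, and \eqref{8.13.1} only gives
\[
r\,\dashnorm b\|_{L_{(p,q)}(C_r(t,y))}\le \hat b\,(\hat\rho/r)^{d/p+2/q-1}.
\]
When $d/p+2/q\le 1$ this is $\le\hat b$. In that case your choice $R_n=\hat\rho_0(n+1)$ does give \eqref{7.18.1} for $bI_{|x|<n}$ with the same $\hat b$.

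The paper, however, only assumes $1/p+1/q\le1$ and explicitly allows $d/p+2/q>1$ (Remark~\ref{remark 8.5.2}). Then the factor above is unbounded, because only $\hat\rho\ge\hat\rho_0$ and finiteness of $\hat\rho$ are assumed. Covering a cylinder by cylinders of the varying radii $\hat\rho(|z|)$ gives constants depending on the ratios of those radii, i.e.\ on $n$ and $\hat\rho$. Such a constant is useless, since the threshold $\hat b$ must depend only on $d,\delta,p,q$.

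The paper never reduces to Theorem~\ref{theorem 6.19.1}. Smallness enters only through Lemma~\ref{lemma 7.18.1}, whose hypothesis \eqref{7.18.010} involves just the cylinder at the current point $(\sft_\tau,x_\tau)$. Scaling therefore gives the exit bound at the variable radius $\hat\rho(|x_\tau|)$ (Lemma~\ref{lemma 8.13.2}). The Krylov estimates need only local finiteness (Corollary~\ref{corollary 8.16.1}). The approximating drifts are cut-offs of $b$ lying in $L_{(p,q)}$; they are handled by Theorem~\ref{theorem 6.19.10} and inherit \eqref{8.13.1}.

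\textbf{Step~3 does not show the solution sets are closed.} Krylov's selection procedure needs that, for $(t^{(n)}_0,x^{(n)}_0)\to(t_0,x_0)$, limits of solutions of \eqref{4.27.10} are again solutions. This is Theorem~\ref{theorem 8.17.1}, the analog of Theorem~\ref{theorem 9.6.4}(ii), which the paper calls the crucial step. It does not follow from your Steps~1 and~2.

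To pass to the limit in $\int_0^t b(\sft_s,x^{(n)}_s)\,ds$, and in the stochastic integral with merely measurable $\sigma$, you need two further ingredients:
\begin{enumerate}
\item[(a)] estimates $E\int_0^\infty e^{-\lambda_0 t}f(t,x_t)\,dt\le N\|f\|_{L_{(p,q)}}$ for $f$ supported in large balls, uniform over all solutions. This is Lemma~\ref{lemma 8.17.1}, proved by summing over excursions between two spheres using the variable-scale exit bound.
\item[(b)] the non-explosion bound carried over to the limit process, which a priori is defined only for a.e.\ $t$. This is the role of Lemma~\ref{lemma 8.17.3}.
\end{enumerate}
Your cylinder-wise bound $E\int_0^\gamma|b|\,ds\le N\hat b\hat\rho$ supplies neither. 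Simply taking limits of the families $P^n_{t,x}$ would not in general give a measurable strong Markov family: the subsequence depends on $(t,x)$, and the Markov property does not pass to limits without uniqueness.

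Your Lyapunov route to non-explosion is a plausible alternative to the paper's chaining over shells $R_{k+1}=R_k+\hat\rho(R_k)$, which yields the bound $\nu_0^{\sum\rho_k^2}$ with $\sum\rho_k^2\ge K_0\ln\ln R$. It also rests on the variable-scale exit estimate (needed to get time of order $\hat\rho^{2}$ per cylinder), and you should state that explicitly.
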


This theorem is proved in Section
\ref{section 8.16.1}.
 
\begin{remark}
                          \label{remark 8.13.3}
Observe that if
$|b(t,x)|\leq K(1+|x|)$, then,
for $\hat\rho(|x|)= eK_{0}/(|x|\vee e)$
$$
\dashnorm b\|_{L_{(p,q)}(C_{ 
\rho }(t,x))}\leq K(1+|x|+\hat\rho)    
\leq  \hat\rho^{-1}A,
$$
where
$$
A=\frac{eK_{0}}{|x|\vee e}(K+|x|+K_{0})\leq K_{0}e
(K+2),
$$
which can be made less than $\hat b$ by choosing $K_{0}$ small enough.

Condition \eqref{8.13.1} coincides with \eqref{7.18.1} if $\hat\rho\equiv1$.
 
\end{remark}

\begin{remark}
                          \label{remark 8.20.1}
It is easy to see from the proof of
Theorem \ref{theorem 6.19.01} that on can
take $\hat\rho_{0}(|x|)=K_{0}(|x|(\ln |x|)
\ln\ln |x|)^{-1}$ for $|x|\geq 40$.
\end{remark}

\begin{remark}
                       \label{remark 8.30.1}
It is almost obvious that it suffices
to assume that \eqref{8.13.1}
holds only for $|x|\geq R$ with any fixed $R$.

\end{remark}

\mysection{Some properties of solutions
of \eqref{4.27.10}}
                     \label{section 8.2.1}

The estimates in this section are needed
to show that the solutions of \eqref{4.27.10}
do not blow up in finite time.

Let $(\Omega,\cF,P)$ be a complete probability
space, $\{\cF_{t},t\geq0\}$ be an increasing
filtration of complete $\sigma$-fields
$\cF_{t}\subset \cF$.
Let $(w_{t},\cF_{t}), t\geq0$, be a $d$-dimensional
Wiener process and $x_{t},t\geq 0$, be a
$d$-dimensional continuous $\cF_{t}$-adapted
process. We fix $(t_{0},x_{0})\in\bR^{d+1}$  
and we are going to investigate   equation
\eqref{4.27.10}
assuming that $x_{t}$ is its solution.
Set $\sft_{s}=t_{0}+s$.

\begin{lemma}
                         \label{lemma 7.27.2}
Suppose that, for some $ \rho$, we have $\hat b_{ \rho}<\infty$. Let $\lambda>0$,
$\tau$ be a stopping time,   $y=y(\omega)$  be  $\bR^{d}$-valued
$\cF_{\tau}$-measurable. Then on the set
$\{\tau<\infty\}$ we have
\begin{equation}
                             \label{7.27.3}
E_{\cF_{\tau}}\int_{0}^{\tau_{\rho,y}}e^{-\lambda s}|b(\sft_{\tau+s},x_{\tau+s})|\,ds 
\leq N(d,\delta,p,q,\lambda, \rho, \hat b_{ \rho}),
\end{equation}
with probability one,
where $\tau_{\rho}$ is the first exit time
of $ x_{\tau+s} $ from $ B_{\rho}(y)$ and
$$
E_{\cF_{\tau}}\Big(...\Big)=
E \Big\{\Big(...\Big)\mid \cF_{\tau}\Big\}.
$$
\end{lemma}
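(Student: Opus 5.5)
We plan to reduce the statement to a Krylov-type estimate for a process with bounded drift, obtained by killing the drift outside the ball. Fix $\omega$-dependence through the conditioning: by the strong Markov-type shift of the Wiener process, the process $\tilde x_{s}=x_{\tau+s}$ satisfies, conditionally on $\cF_{\tau}$, an equation of the same type driven by $\tilde w_{s}=w_{\tau+s}-w_{\tau}$ with starting point $x_{\tau}$ and time shift $\sft_{\tau}$. Since $y$ and $\sft_\tau$ are $\cF_{\tau}$-measurable, it suffices (after a standard regular conditional probability argument, or by freezing $\cF_\tau$-measurable data via a monotone class argument) to prove
$$
E\int_{0}^{\tau_{\rho,y}}e^{-\lambda s}|b(t_{0}+s,x_{s})|\,ds\leq N
$$
for the solution of \eqref{4.27.10} and deterministic $y$, $t_0$, $x_0$, with $N$ independent of $t_{0},x_{0},y$. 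If $x_{0}\notin B_{\rho}(y)$ the left side is zero, so assume $x_0\in B_\rho(y)$.

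On $[0,\tau_{\rho,y})$ the process is an It\^o process with diffusion $\sigma$, $\sigma\sigma^*\in\bS_\delta$, and drift $b$. Covering $B_{\rho}(y)\times[t_{0},t_{0}+k+1)$ by cylinders from $\bC_{\rho}$ we get $\|bI_{[t_0+k,t_0+k+1)\times B_\rho(y)}\|_{L_{(p,q)}}\le N(d,p,q,\rho)\hat b_{\rho}$ for each $k$. Splitting the time integral over $[k,k+1)$ and using $e^{-\lambda s}\le e^{-\lambda k}$, we reduce to bounding
$$
E\int_{k\wedge\tau_{\rho,y}}^{(k+1)\wedge\tau_{\rho,y}}|b(t_0+s,x_s)|\,ds ,
$$
then summing a geometric series in $k$. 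Apply Krylov's estimate in mixed norms for It\^o processes stopped at the exit from a ball: for nonnegative $f$,
$$
E\int_{0}^{T\wedge\tau}f(t_0+s,x_s)\,ds\le N\|f\|_{L_{(p,q)}}\Bigl(1+E\int_0^{T\wedge\tau}|b|\,ds\Bigr)^{\theta}
$$
with the dependence on the drift entering only through $E\int|b|$ (this is the form valid when $d/p+2/q$ may exceed... no; here we have $1/p+1/q\le1$, which is what the known estimates in $L_{(p,q)}$ require, e.g. those behind Theorem \ref{theorem 6.19.10} in \cite{Kr_26}).

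The main obstacle is that the estimate's constant depends on $E\int|b|$, which is the very quantity we bound, producing a circular inequality $I\le N\hat b_\rho(1+I)^{\theta}$. First we would localize further so that $I$ is a priori finite: replace $\tau_{\rho,y}$ by $\tau_{\rho,y}\wedge\gamma_n$ with $\gamma_n=\inf\{s:\int_0^s|b|\,du\ge n\}$; then $I_n<\infty$. If $\theta<1$ the inequality gives a bound independent of $n$ directly. If the available estimate only has $\theta=1$ (linear), we would instead split $[0,1]$ into short intervals of length $h$, on which the estimate constant behaves like $h^{1-d/(2p)-1/q}$-type small factors times $\hat b_\rho$ (or use the parabolic scaling to cylinders of size $r\le\rho$ with $r$ small) so that the coefficient in front of $I$ is less than $1/2$; absorbing and iterating over the $N(h)$ subintervals via the strong Markov/conditional form at the intermediate stopping times gives the bound. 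Letting $n\to\infty$ by monotone convergence finishes the proof, with $N$ depending only on $d,\delta,p,q,\lambda,\rho,\hat b_\rho$.
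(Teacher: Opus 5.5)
Your outer argument is the paper's. Summing $\sum_k e^{-\lambda k}$ over unit time slabs is the same renewal step as the paper's $M_T\le N_1+e^{-\lambda\rho^2}M_T$: the paper restarts at $\tau+\rho^2$ and takes $\sup_\tau\esssup$ over stopping times, where you pass to regular conditional probabilities. Your $\gamma_n$-localization plays the role of the paper's truncation at $T$.

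So everything rests on the bound for one slab. The paper does not prove it; it quotes Theorem 1.1.12 of \cite{Kr_26}, which gives $N(1+\hat b_\rho^{d/(p-d)})\hat b_\rho$. You leave this step conditional on an unspecified exponent $\theta$, and that is the gap: only your $\theta<1$ branch is viable, and you do not supply the estimate it needs. Note that $N(1+\hat b_\rho^{d/(p-d)})\hat b_\rho$ is exactly what solving $I\le N\hat b_\rho(1+I)^{d/p}$ produces. What you must actually cite or prove is therefore a Krylov estimate whose dependence on $E\int|b|\,ds$ is sublinear, presumably with $\theta=d/p$.

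The $\theta=1$ fallback does not work. Under parabolic scaling to size $r=\sqrt h$, the drift enters a Krylov estimate only through $r^{-1}E\int|b|\,ds$. With $J=r^{-1}E\int_0^{\tau_r\wedge r^2}|b|\,ds$ the inequality to be absorbed reads $J\le N\,r\hat b_r(1+J)$, and
$r\hat b_r\le(\rho/r)^{d/p+2/q-1}\rho\,\hat b_\rho$.
The factor $h^{1-d/(2p)-1/q}\hat b_\rho$ you quote is the Brownian one and ignores this normalization of the drift. The coefficient $r\hat b_r$ becomes small as $r\downarrow0$ only if $d/p+2/q<1$. The paper explicitly allows $d/p+2/q\ge1$ (Remark \ref{remark 8.5.2}), and this always holds in the borderline case $d/p+1/q=1$. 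For a general $b$ with merely $\hat b_\rho<\infty$, no scale gives a small coefficient.

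Your parenthetical about exponents is also off. A Krylov estimate for arbitrary $f$ with exponents $(p,q)$ requires $d/p+1/q\le1$, as in \eqref{7.29.2}, not merely $1/p+1/q\le1$. For Brownian motion with $d\ge2$ and $p=q=2$ it already fails, since $\int_0^1\!\int_{\bR^d}p_s^2(x)\,dx\,ds=\infty$ for the heat kernel $p_s$. The exponent $d/(p-d)$ in the cited bound likewise presupposes $p>d$, so your route is meaningful only in that regime.
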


Proof. It is not hard to see that, when $y$
is an independent deterministic variable,
$$
\int_{0}^{\tau_{\rho,y}}e^{-\lambda s}|b(\sft_{\tau+s},x_{\tau+s})|\,ds
$$
 is jointly measurable with respect to $(\omega,y)$. It follows that, it suffices
to prove \eqref{7.27.3} for each fixed $y$.
For $y$ fixed
it suffices to prove that \eqref{7.27.3}
holds for any $T\in(0,\infty)$ with
$$
u_{\tau,T}:=E_{\cF_{\tau}}\int_{0}^{\tau_{\rho,y}\wedge T}e^{-\lambda s}|b(\sft_{\tau+s},x_{\tau+s})|\,ds 
$$
in place of its left-hand side. Owing to 
Theorem 1.1.12 of \cite{Kr_26}, for each $T$,
$u_{T}$ is bounded by a constant independent
of $\tau$. Let
$$
M_{T}=\sup_{\tau}\esssup u_{\tau,T}.
$$
Clearly, $M_{T}$ is an increasing function and  for $T>\rho^{2}$
$$
u_{T} = E_{\cF_{\tau}}I_{\tau_{\rho,y}\leq \rho^{2}}\int_{0}^{\tau_{\rho,y}\wedge \rho^{2}}e^{-\lambda s}|b(\sft_{\tau+s},x_{\tau+s})|\,ds
$$
\begin{equation}
                        \label{8.1.2}
+e^{-\lambda \rho^{2}}E_{\cF_{\tau}}I_{\tau_{\rho,y}>\rho^{2}}
E_{ _{\cF_{\tau +\rho^{2}}}}\int_{0}^{(\tau_{\rho,y}-\rho^{2})\wedge (T-\rho^{2})}e^{-\lambda s}|b(\sft_{\tau+\rho^{2}+s},x_{\tau+\rho^{2}+s})|\,ds.
\end{equation}
Here on the set $\{\tau_{\rho,y}>\rho^{2}\}$ ($\in
\cF_{\tau+\rho^{2}}$), $\tau_{\rho,y}-\rho^{2} $
is the first exit time of $x_{\tau+\rho^{2}+s}$
from $B_{\rho}(y)$ and $T-\rho^{2}<T$. Therefore,
by definition the interior conditional expectation is less than $M_{T}$.

By Theorem 1.1.12 of \cite{Kr_26} the first term on the right in \eqref{8.1.2} is less than
$$
N(d,\delta,p,q, \rho)\big(1+\hat b_{ \rho}
^{d/(p-d)}\big)\hat b_{ \rho}:=N_{1}.
$$
It follows that $M_{T}\leq 
N_{1}+e^{-\lambda \tau}M_{T}$, which implies
\eqref{7.27.3} with $u_{\tau,T}$ in place of
its left-hand side. \qed

\begin{remark}
                          \label{remark 8.1.2}
In the above proof we saw that some parameters
can by made $\cF_{\tau}$-measurable.
This argument tacitly hidden under our
other arguments.
\end{remark}

The following theorem is a straightforward
consequence of Theorem 1.1.6 of \cite{Kr_26}
and Lemma \ref{lemma 7.27.2}.

\begin{theorem}
                      \label{theorem 7.29.1}
Let
\begin{equation}
                            \label{7.29.2}
\hat p,\hat q\in[1,\infty],\quad
\frac{d}{\hat p}+\frac{1}{\hat q}\leq 1
\end{equation}
and let the assumptions of Lemma \ref{lemma 7.27.2} be satisfied. Then for any Borel
$f \geq0$ on the set $\{\tau<\infty\}$ we have
\begin{equation}
                              \label{7.29.3}
 E_{\cF_{\tau}}\int_{0}^{\tau_{\rho,y}}e^{-\lambda s}
f(\sft_{s},x_{s})\,dt\leq N \|f\|_{L_{(\hat p,\hat q)}}
\end{equation}
with probability one,
where $N$ depends only on $d,\delta,p,q,
\lambda, \rho,\hat p,\hat q, \hat b_{ \rho}$.
\end{theorem}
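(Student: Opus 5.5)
The proof will follow the structure of the proof of Lemma \ref{lemma 7.27.2}, with Theorem 1.1.6 of \cite{Kr_26} used in place of Theorem 1.1.12. That earlier result gives a Krylov-type estimate on a short time interval for solutions of \eqref{4.27.10} with drift controlled through $\hat b_{\rho}$. For $\hat p,\hat q$ satisfying \eqref{7.29.2}, it yields
$$
E_{\cF_{\tau}}\int_{0}^{\tau_{\rho,y}\wedge\rho^{2}}f(\sft_{\tau+s},x_{\tau+s})\,ds\leq N_{1}\|f\|_{L_{(\hat p,\hat q)}},
$$
where $N_{1}$ depends only on $d,\delta,p,q,\rho,\hat p,\hat q,\hat b_{\rho}$. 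The key point is that $N_{1}$ is uniform in the stopping time $\tau$ and in the $\cF_{\tau}$-measurable center $y$. As in the lemma, we first reduce to deterministic $y$ by joint measurability in $(\omega,y)$ (cf. Remark \ref{remark 8.1.2}). We also assume $f$ is bounded with $\|f\|_{L_{(\hat p,\hat q)}}<\infty$ and cap the time at $T$, so that every quantity below is finite. The general case then follows by monotone convergence.

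Next, set
$$
u_{\tau,T}=E_{\cF_{\tau}}\int_{0}^{\tau_{\rho,y}\wedge T}e^{-\lambda s}f(\sft_{\tau+s},x_{\tau+s})\,ds,
\qquad
M_{T}=\sup_{\tau}\esssup u_{\tau,T}.
$$
Since $f$ is bounded, $M_{T}\le T\sup f<\infty$. For $T>\rho^{2}$, split the integral at $\rho^{2}$, exactly as in \eqref{8.1.2}. The piece over $[0,\rho^{2}\wedge\tau_{\rho,y}]$ is bounded by $N_{1}\|f\|$. On $\{\tau_{\rho,y}>\rho^{2}\}$, the remainder equals $e^{-\lambda\rho^{2}}$ times a conditional expectation given $\cF_{\tau+\rho^{2}}$ of the same functional. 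That functional is started at the stopping time $\tau+\rho^{2}$ and runs up to the exit time $\tau_{\rho,y}-\rho^{2}$ from the same ball, with horizon $T-\rho^{2}<T$. Hence it is at most $M_{T}$. This gives
$$
M_{T}\le N_{1}\|f\|+e^{-\lambda\rho^{2}}M_{T},
$$
and therefore $M_{T}\le N_{1}(1-e^{-\lambda\rho^{2}})^{-1}\|f\|$. This bound is uniform in $T$, so letting $T\to\infty$ gives \eqref{7.29.3}.

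The main obstacle is the first step. We must check that Theorem 1.1.6 of \cite{Kr_26} applies in conditional form, starting at an arbitrary stopping time $\tau$, for the shifted process $x_{\tau+s}$ driven by the Wiener process $w_{\tau+s}-w_{\tau}$. We must also check that its constant depends on $b$ only through $\hat b_{\rho}$, or through the bound for $|b|$ supplied by Lemma \ref{lemma 7.27.2}. The time shift is harmless because $\hat b_{\rho}$ is a supremum over all cylinders. The conditioning is handled by applying the unconditional estimate on each set $A\in\cF_{\tau}$, using the regular conditional distribution. If Theorem 1.1.6 instead requires a priori finiteness of $E\int|b|$ up to the exit time, this is exactly what Lemma \ref{lemma 7.27.2} provides. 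That dependence is why the theorem is stated as a consequence of both results.
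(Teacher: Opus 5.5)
Your argument is sound, but your main line is organized differently from the paper, and the fallback in your last paragraph is in fact the paper's whole proof.

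\textbf{The paper's route.} The paper does the $M_T$ iteration once, on $|b|$. Lemma \ref{lemma 7.27.2} bootstraps the short-horizon drift bound of Theorem 1.1.12 of \cite{Kr_26} into \eqref{7.27.3}. That estimate is the analogue of \eqref{7.29.3} with $|b|$ in place of $f$ and a constant on the right. It holds on the whole interval $[0,\tau_{\rho,y}]$, uniformly in the stopping time $\tau$ and in the $\cF_{\tau}$-measurable center $y$. This uniform drift control is the input that Theorem 1.1.6 of \cite{Kr_26} needs. That theorem then yields \eqref{7.29.3} directly for arbitrary Borel $f\geq0$ and $(\hat p,\hat q)$ satisfying \eqref{7.29.2}, with no further iteration.

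\textbf{Your route.} You put the iteration on $f$ instead. This forces you to start from a short-horizon estimate whose constant is controlled by $\hat b_{\rho}$, and you attribute that estimate to Theorem 1.1.6 alone. That attribution is the weak point: the paper's reliance on Lemma \ref{lemma 7.27.2} indicates that Theorem 1.1.6 does not control the drift by itself.

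Once you feed Theorem 1.1.6 the bound from Lemma \ref{lemma 7.27.2}, as in your last paragraph, you already have the full-interval estimate. Your short-horizon bound then follows trivially from it by multiplying by $e^{\lambda\rho^{2}}$, so your iteration becomes redundant. Also, what Theorem 1.1.6 needs is not ``a priori finiteness'' of the drift integral. It needs a quantitative bound uniform in $\tau$ and $y$; otherwise $N$ would not depend only on $d,\delta,p,q,\lambda,\rho,\hat p,\hat q,\hat b_{\rho}$.

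\textbf{A genuinely independent version.} Your route could be made independent of Lemma \ref{lemma 7.27.2} as follows: feed the short-horizon bound of Theorem 1.1.12 into a version of Theorem 1.1.6 for exit times from the cylinder $[\sft_{\tau},\sft_{\tau}+\rho^{2})\times B_{\rho}(y)$, then iterate on $f$. This has two costs. You would have to verify that such a form of Theorem 1.1.6 is available. You would also repeat the iteration for every $f$, rather than once for the single function $|b|$ while treating the passage to general $f$ as a black box.

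Your iteration itself is correct, including the contraction factor $e^{-\lambda\rho^{2}}$. The proof of Lemma \ref{lemma 7.27.2} misprints this factor as $e^{-\lambda\tau}$.
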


\begin{lemma}
                    \label{lemma 7.18.1}
 Let $\tau$ be a stopping time 
and
 $$\lambda_{0}=
(\delta^{-1}d)\vee \ln(1/6) ,\quad\nu_{0}=11/12.
$$
 We claim that there exists
$\hat b>0$, depending only on $d,\delta,p,q$,
such that if 
\begin{equation}
                             \label{7.18.010}
I_{\tau<\infty}\hat b(\tau):=
 I_{\tau<\infty}\dashnorm b\|_{L_{(p,q)}(C_{1}(\sft_\tau,x_{\tau}))}\leq \hat b,
\end{equation}
then  
on the set $\{\tau<\infty\}$ we have
\begin{equation}
                          \label{7.6.10}
I:=E_{\cF_{\tau}}e^{-\lambda_{0}\tau_{1}}\leq\nu_{0}
\end{equation}
with probability one,
where $\tau_{1}$ is the first exit time of
$ x_{\tau+s}  $ from $B_{1}(x_{\tau})$.
 
\end{lemma}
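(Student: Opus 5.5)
We want to show that, after time $\tau$, the process leaves the unit ball around $x_{\tau}$ quickly enough in the Laplace-transform sense. The plan is to apply Itô's formula to the quadratic function $u(x)=|x-x_{\tau}|^{2}$ (which is $\cF_{\tau}$-measurable in its parameter; cf. Remark \ref{remark 8.1.2}), weighted with $e^{-\lambda_{0}s}$, on the interval $[0,\tau_{1}]$. We set $y_{s}=x_{\tau+s}-x_{\tau}$. On $\{\tau<\infty\}$ we have $|y_{\tau_{1}}|=1$ whenever $\tau_{1}<\infty$. By Itô's formula,
$$
e^{-\lambda_{0}(\tau_{1}\wedge T)}|y_{\tau_{1}\wedge T}|^{2}
=\int_{0}^{\tau_{1}\wedge T}e^{-\lambda_{0}s}\big(\tr a-\lambda_{0}|y_{s}|^{2}+2y_{s}\cdot b\big)(\sft_{\tau+s},x_{\tau+s})\,ds+\text{mart}.
$$
Since $\tr a\geq d\delta$ and $|y_{s}|\leq1$ before $\tau_{1}$, we get after taking $E_{\cF_{\tau}}$ and letting $T\to\infty$ (the local-martingale term is a bounded stochastic integral up to $\tau_{1}$, so it has zero conditional mean):
$$
E_{\cF_{\tau}}e^{-\lambda_{0}\tau_{1}}\geq E_{\cF_{\tau}}\int_{0}^{\tau_{1}}e^{-\lambda_{0}s}(d\delta-\lambda_{0})\,ds-2E_{\cF_{\tau}}\int_{0}^{\tau_{1}}|b(\sft_{\tau+s},x_{\tau+s})|\,ds .
$$
This lower bound has the wrong sign for what we want. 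So we instead use the other natural test function, $\phi(y)=1-|y|^{2}$ or rather $e^{-\lambda_{0}s}\cosh$-type functions. The cleanest choice is to apply Itô to $v(s,y)=e^{-\lambda_{0}s}(2-|y|^{2})$ or similar and aim for
$$
E_{\cF_{\tau}}e^{-\lambda_{0}\tau_{1}}\leq 1-c\,E_{\cF_{\tau}}\int_{0}^{\tau_{1}}e^{-\lambda_{0}s}\,ds+2E_{\cF_{\tau}}\int_{0}^{\tau_{1}\wedge1}|b|\,ds+(\text{tail beyond }s=1),
$$
where the constant $c>0$ comes from the choice $\lambda_{0}\geq d/\delta\geq\tr a$. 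With $\phi(s,y)=e^{-\lambda_{0}s}|y|^{2}$ one has $\partial_{s}\phi+\tfrac12 a^{ij}D_{ij}\phi=e^{-\lambda_{0}s}(\tr a-\lambda_{0}|y|^{2})$. Combining $\phi$ with the function $e^{-\lambda_{0}s}$ itself, i.e. using $\psi=e^{-\lambda_{0}s}(1+\mu(|y|^{2}-1))$, makes the generator $\le$ a negative multiple of $e^{-\lambda_{0}s}$ plus the drift term. Hence $E e^{-\lambda_{0}\tau_{1}}=E\psi(\tau_{1},y_{\tau_{1}})$ is at most $\psi(0,0)=1-\mu$ plus the contribution of $b$.

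The second step is to bound the drift term $E_{\cF_{\tau}}\int_{0}^{\tau_{1}\wedge1}|b(\sft_{\tau+s},x_{\tau+s})|\,ds$. For $s\leq1$ and $s<\tau_{1}$ the point $(\sft_{\tau+s},x_{\tau+s})$ lies in $C_{1}(\sft_{\tau},x_{\tau})$, so we may replace $b$ by $bI_{C_{1}(\sft_\tau,x_\tau)}$. Its $L_{(p,q)}$ norm is $N\hat b(\tau)\leq N\hat b$ by \eqref{7.18.010}. We then apply the conditional estimate of Theorem 1.1.12 of \cite{Kr_26} (as in the proof of Lemma \ref{lemma 7.27.2}), or Theorem \ref{theorem 7.29.1} with $\hat p=p$, $\hat q=q$. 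The key point is that the constant must not depend on $b$ in a way that blows up. This works because for $\hat b\le1$ the constant $N(1+\hat b^{d/(p-d)})\hat b$ from Lemma \ref{lemma 7.27.2} is $\leq N\hat b$, which is small. For $s>1$ we use $e^{-\lambda_{0}s}\leq e^{-\lambda_{0}}\le 1/6$ together with the contribution of the time after $1$. Simplest is to cap: $\tau_{1}\wedge1$ is used in the Itô formula, and on $\{\tau_{1}>1\}$ we bound $e^{-\lambda_{0}\tau_{1}}\leq e^{-\lambda_{0}}$. This is where the specific numbers $\lambda_{0}\ge\ln 6$-type and $\nu_{0}=11/12$ come from.

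The main obstacle is bookkeeping the constants so that the result is a universal bound $\nu_{0}=11/12$ rather than merely something $<1$. The plan is as follows. First, without drift, we get $E e^{-\lambda_{0}(\tau_{1}\wedge1)}$ bounded away from $1$ by an explicit amount: since $\lambda_{0}\ge d/\delta$, the term $\tr a-\lambda_{0}|y|^{2}$ controls $|y|^{2}$, and on $\{\tau_{1}>1\}$ the factor is $e^{-\lambda_{0}}\le 1/6$. Second, we choose $\hat b$ so small that the drift term $N\hat b\leq1/12$. The case split $\{\tau_{1}\le1\}$ versus $\{\tau_{1}>1\}$ and the quadratic test function should then give $I\leq 5/6+1/12=11/12$.
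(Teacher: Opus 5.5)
Your argument is correct and is essentially the paper's proof. The paper applies It\^o's formula to $e^{-\lambda_{0}s}\cosh|x_{\tau+s}-x_{\tau}|$ on $[0,\tau_{1}\wedge1]$, uses $e^{-\lambda_{0}}\leq 1/6$ on $\{\tau_{1}\geq1\}$, and bounds the drift term by Theorem 1.1.12 of \cite{Kr_26} as $N(1+\hat b^{d/(p-d)})\hat b$, arriving at $(\cosh1)^{-1}+1/6+N\hat b$. Your quadratic barrier $\psi=e^{-\lambda_{0}s}(1-\mu+\mu|y|^{2})$ does the same job with $1-\mu$ in place of $(\cosh 1)^{-1}$, provided you fix $\mu\leq1/2$; that choice is what gives $\mu\,\tr a\leq\lambda_{0}(1-\mu)$ from $\tr a\leq d/\delta\leq\lambda_{0}$.

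Two things to tidy up:
\begin{itemize}
\item For the drift term, use only the Theorem 1.1.12 route and not Theorem \ref{theorem 7.29.1}. The constant there depends on the global quantity $\hat b_{\rho}$ (and that theorem needs $d/\hat p+1/\hat q\leq1$), so it would not give a $\hat b$ depending only on $d,\delta,p,q$.
\item Drop the preliminary candidates $|y|^{2}$ and $2-|y|^{2}$, since neither gives a useful upper bound.
\end{itemize}
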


Proof. Note that, if $\tau<\infty $,
$$
I\leq E_{\cF_{\tau}}e^{-\lambda_{0}\tau_{1}}I_{\tau_{1}<1}
+e^{-\lambda_{0} }\leq (\cosh 1)^{-1}J+1/6,
$$
where
$$
J:=E_{\cF_{\tau}}e^{-\lambda_{0}(\tau_{1}\wedge1)} p( x_{\tau+ \tau_{1}\wedge1 }-x_{\tau}) ,
$$
and $p( x)=\cosh |x|$.

Simple manipulations
show that
 the function $p (x)$ 
satisfies
$$
D_{r}p= \frac{x^{r}}{|x|}\sinh  |x|,\quad
 D_{rs}p=\frac{x^{r}x^{s}}{|x|^{2}}
\Big( \cosh  |x|-\frac{\sinh  |x|}{ |x|}\Big)
+\delta^{rs} 
\frac{\sinh  |x|}{ |x|}
$$
and, since $\sinh |x|\leq |x| \cosh |x|$, for any symmetric nonnegative $d\times d$-matrix $a$, 
$$
a^{rs}D_{rs}p-p \tr a
\leq0.
$$
 
In our case $\tr a\leq \lambda_{0} $.
By using It\^o's formula and observing
that $|Dp|\leq   p\leq  \cosh 1$ in $B_{1}$
 and  
$$
|b^{r}(\sft_\tau+s,x_{\tau+s})D_{r} p(x_{\tau+s} - x_{\tau})| 
\leq |b(\sft_{\tau+s},x_{\tau+s})| \cosh 1
$$ 
for $s\leq\tau_{1}$, 
 we see that  
$$
J\leq 1+ (\cosh 1)E _{\cF_{\tau}} \int_{0}^{\tau_{1}\wedge1}
e^{-\lambda_{0} s}|b(\sft_{\tau+s},x_{\tau+s})|\,ds.
$$
By Theorem 1.1.12 of \cite{Kr_26} the last
expectation is dominated by
$$
N(d,\delta,p,q)\big(1+\hat b^{d/(p-d)}(\tau)
\big)\hat b (\tau)
\leq N(d,\delta,p,q)\big(1+\hat  b 
^{d/(p-d)}\big)\hat  b .
$$
Thus,
$$
I\leq (\cosh1)^{-1} 
\big(1+N \big(1+\hat  b 
^{d/(p-d)}\big)\hat  b\cosh 1 \big)+1/6
$$
$$
\leq (\cosh1)^{-1}+1/6+N \big(1+\hat  b 
^{d/(p-d)}\big)\hat  b \big).
$$
Since $(\cosh1)^{-1}<2/3=4/6$, we see that
 there exists $\hat b$ such that \eqref{7.18.010}
implies \eqref{7.6.10}. \qed

Everywhere {\em below in this section}
we impose the following.

\begin{assumption}
                     \label{assumption 7.28.1}
With $\hat b$ from Lemma \ref{lemma 7.18.1}
we have $\hat b_{1}\leq\hat b$.
 
\end{assumption}

\begin{remark}
                       \label{remark 7.28.1}
As is easy to see, for any $ \rho>0$,
$\hat b_{ \rho}\leq N(d, \rho )\hat b$.
\end{remark}

\begin{corollary}
                     \label{corollary 8.30.1}
By iterating \eqref{7.6.10} we get that for
any $\rho>0$, $E_{\cF_{\tau}}e^{-\lambda_{0}\tau_{\rho}}\leq\nu^{\lfloor \rho\rfloor}_{0}$,
where $\tau_{\rho}$ is the first exit time of
$ x_{\tau+s}  $ from $B_{\rho}(x_{\tau})$
\end{corollary}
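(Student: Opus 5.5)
We prove Corollary \ref{corollary 8.30.1} by iterating Lemma \ref{lemma 7.18.1} along a chain of successive exit times and using the strong Markov-type conditioning given by the tower property. Set $n=\lfloor\rho\rfloor$; if $n=0$ the claim is trivial, since $e^{-\lambda_{0}\tau_{\rho}}\leq1$. For $n\geq1$ define stopping times $\gamma_{0}=\tau$ and
$$
\gamma_{k+1}=\inf\{t\geq\gamma_{k}:|x_{t}-x_{\gamma_{k}}|\geq1\},\qquad k\geq0,
$$
which are the successive exit times from unit balls centered at the current positions. Then $\gamma_{k+1}-\gamma_{k}$ is exactly the $\tau_{1}$ of Lemma \ref{lemma 7.18.1} applied with $\gamma_{k}$ in place of $\tau$.

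The first step is geometric. Before time $\gamma_{n}$ the path moved less than $1$ in each of the first $n$ segments, so for $t<\gamma_{n}$ we have $|x_{t}-x_{\tau}|<n\leq\rho$. Hence $\tau+\tau_{\rho}\geq\gamma_{n}$, i.e. $\tau_{\rho}\geq\gamma_{n}-\tau=\sum_{k<n}(\gamma_{k+1}-\gamma_{k})$, and therefore $e^{-\lambda_{0}\tau_{\rho}}\leq\prod_{k<n}e^{-\lambda_{0}(\gamma_{k+1}-\gamma_{k})}$. This holds on $\{\tau<\infty\}$. On sets where some $\gamma_{k}=\infty$, the corresponding factors are zero, which is consistent with the convention $e^{-\infty}=0$.

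The second step verifies the hypothesis. Under Assumption \ref{assumption 7.28.1}, $\hat b(\gamma_{k})=\dashnorm b\|_{L_{(p,q)}(C_{1}(\sft_{\gamma_{k}},x_{\gamma_{k}}))}\leq\hat b_{1}\leq\hat b$ on $\{\gamma_{k}<\infty\}$, since $C_{1}(\sft_{\gamma_k},x_{\gamma_k})\in\bC_{1}$. So \eqref{7.18.010} holds for every $\gamma_{k}$, and Lemma \ref{lemma 7.18.1} gives
$$
E_{\cF_{\gamma_{k}}}e^{-\lambda_{0}(\gamma_{k+1}-\gamma_{k})}\leq\nu_{0}\quad\text{on }\{\gamma_{k}<\infty\}.
$$
On $\{\gamma_{k}=\infty\}$ the left side is $0$ with the convention above.

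The third step is the induction, conditioning backwards. Let $\Pi_{m}=\prod_{k<m}e^{-\lambda_{0}(\gamma_{k+1}-\gamma_{k})}$. Then $\Pi_{m}$ is $\cF_{\gamma_{m}}$-measurable, $\cF_{\tau}\subset\cF_{\gamma_{m-1}}$, and
$$
E_{\cF_{\tau}}\Pi_{m}=E_{\cF_{\tau}}\bigl[\Pi_{m-1}E_{\cF_{\gamma_{m-1}}}e^{-\lambda_{0}(\gamma_{m}-\gamma_{m-1})}\bigr]\leq\nu_{0}E_{\cF_{\tau}}\Pi_{m-1}.
$$
Thus $E_{\cF_{\tau}}\Pi_{n}\leq\nu_{0}^{n}$, which combined with the first step gives the claim.

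The main obstacle is bookkeeping rather than analysis. One must check that the $\gamma_{k}$ are stopping times of $\{\cF_{t}\}$, that the conditional bound of Lemma \ref{lemma 7.18.1} applies at random times $\gamma_{k}$ that may be infinite, and that the products make sense there. This is handled by working on $\{\gamma_{k}<\infty\}$ and using the convention $e^{-\infty}=0$.
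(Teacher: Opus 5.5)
Your proposal is correct and is exactly the iteration the paper means by ``by iterating \eqref{7.6.10}'': successive exit times $\gamma_k$ from unit balls centered at the previous exit points, Lemma \ref{lemma 7.18.1} applied at each $\gamma_k$ (legitimate because Assumption \ref{assumption 7.28.1} gives \eqref{7.18.010} at every stopping time), and the tower property, with $|x_t-x_\tau|<\lfloor\rho\rfloor$ for $t<\gamma_{\lfloor\rho\rfloor}$ giving $\tau+\tau_\rho\geq\gamma_{\lfloor\rho\rfloor}$. One cosmetic improvement: setting $\Pi_m=e^{-\lambda_0(\gamma_m-\tau)}$ directly (it telescopes) removes the undefined $\infty-\infty$ in your individual factors on $\{\gamma_k=\infty\}$.
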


\begin{lemma}
                    \label{lemma 7.28.1}
For any 
$\hat p,\hat q $ satisfying
\eqref{7.29.2}, $y\in\bR^{d}$, Borel function $f(t,x)\geq0$, 
such that $f(t,x)=0$ if $x$ is outside $B_{1/2}(y)$,  and stopping time $\tau$,
on $\{\tau<\infty\}$ we have  
\begin{equation}
                          \label{7.28.2}
u_{\tau}:=E_{\cF_{\tau}}\int_{0}^{\infty}e^{-\lambda_{0}s}
f(\sft_{\tau+s},x_{\tau+s}) \,ds\leq N\|f \|_{L_{(\hat p,\hat q)}} 
\end{equation}
with probability one,
where $N$ depends only on $d,\delta,p,q, \hat p,\hat q$.  
\end{lemma}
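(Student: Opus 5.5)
We decompose the time axis into successive visits to $B_{1/2}(y)$ and departures to the sphere $\partial B_{1}(y)$, and control each visit with Theorem \ref{theorem 7.29.1} while the number of visits is controlled geometrically by Lemma \ref{lemma 7.18.1}.

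Fix $y$ and $\tau$. Define stopping times $\gamma_{0}=\inf\{s\geq0:x_{\tau+s}\in \bar B_{1/2}(y)\}$. Given $\gamma_{k}$, let $\sigma_{k}$ be the first time after $\gamma_{k}$ at which $x_{\tau+s}$ exits $B_{1}(y)$. Then let $\gamma_{k+1}$ be the first time after $\sigma_{k}$ at which $x_{\tau+s}$ returns to $\bar B_{1/2}(y)$. Since $f$ vanishes outside $B_{1/2}(y)$, we have
$$
u_{\tau}=\sum_{k\geq0}E_{\cF_{\tau}}e^{-\lambda_{0}\gamma_{k}}E_{\cF_{\tau+\gamma_{k}}}\int_{0}^{\sigma_{k}-\gamma_{k}}e^{-\lambda_{0}s}f(\sft_{\tau+\gamma_{k}+s},x_{\tau+\gamma_{k}+s})\,ds .
$$
Here $\sigma_{k}-\gamma_{k}$ is the exit time of $x_{\tau+\gamma_{k}+s}$ from $B_{1}(y)$.

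We bound each inner conditional expectation by Theorem \ref{theorem 7.29.1}, applied with the stopping time $\tau+\gamma_{k}$, with $y$ in place of the $\cF_{\tau}$-measurable center, and with $\rho=1$. By Remark \ref{remark 7.28.1} and Assumption \ref{assumption 7.28.1}, $\hat b_{1}\leq\hat b$ is a constant depending only on $d,\delta,p,q$. Hence each term is at most $N\|f\|_{L_{(\hat p,\hat q)}}$ with $N=N(d,\delta,p,q,\hat p,\hat q)$.

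It remains to show
$$
E_{\cF_{\tau}}e^{-\lambda_{0}\gamma_{k}}\leq \nu^{k}
$$
for some $\nu<1$ depending only on the basic constants. Between $\gamma_{k}$ and $\gamma_{k+1}$ the process starts at a point $x_{\tau+\gamma_{k}}\in\bar B_{1/2}(y)$ and must leave $B_{1}(y)$. It therefore leaves $B_{1/2}(x_{\tau+\gamma_{k}})$, so $\gamma_{k+1}-\gamma_{k}\geq\tau'_{1/2}$, the exit time from the ball of radius $1/2$ around the starting point. Conditioning successively on $\cF_{\tau+\gamma_{k}}$ gives
$$
E_{\cF_{\tau}}e^{-\lambda_{0}\gamma_{k+1}}\leq E_{\cF_{\tau}}\Big[e^{-\lambda_{0}\gamma_{k}}E_{\cF_{\tau+\gamma_{k}}}e^{-\lambda_{0}\tau'_{1/2}}\Big].
$$

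The main obstacle is a uniform bound $E_{\cF_{\tau+\gamma_{k}}}e^{-\lambda_{0}\tau'_{1/2}}\leq\nu<1$. Lemma \ref{lemma 7.18.1} controls radius $1$, not $1/2$, and Corollary \ref{corollary 8.30.1} is trivial for $\rho<1$. I would handle this in one of two ways.

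\emph{Rescaling.} Redo the proof of Lemma \ref{lemma 7.18.1} with radius $1/2$, using $p(2x)$ and the time cutoff $1/4$. The drift contribution is bounded via Theorem 1.1.12 of \cite{Kr_26} by $N\hat b_{1/2}$. Since $\hat b_{1/2}\leq N(d)\hat b$ by Remark \ref{remark 7.28.1}, the drift term is small. The deterministic part becomes $(\cosh 1)^{-1}+e^{-\lambda_{0}/4}$, which is less than $1$ because $\lambda_{0}\geq\ln 6$, possibly after using the freedom in $\lambda_{0}$.

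\emph{Avoiding the small radius.} Alternatively, replace $B_{1}(y)$ by $B_{3/2}(y)$ in the definition of $\sigma_{k}$. Then each excursion contains an exit from $B_{1}(x_{\tau+\gamma_{k}})$, and Lemma \ref{lemma 7.18.1} applies directly with $\nu=\nu_{0}$. The cost is that Theorem \ref{theorem 7.29.1} is then used with $\rho=3/2$, which is harmless since $\hat b_{3/2}\leq N\hat b$.

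I would take the second route. Summing the geometric series then gives \eqref{7.28.2} with $N/(1-\nu_{0})$.
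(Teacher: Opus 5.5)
Your chosen (second) route is correct and is essentially the paper's proof. The paper likewise waits for the first hit of $\bar B_{1/2}(y)$ and runs until exit from a larger ball ($B_{2}(y)$ instead of your $B_{3/2}(y)$), so that Theorem \ref{theorem 7.29.1} bounds that stretch and Lemma \ref{lemma 7.18.1} gives the factor $\nu_{0}$. It differs only in closing the recursion via $M=\sup_{\tau}\esssup u_{\tau}$, after first reducing to bounded $f$ so that $M<\infty$, rather than summing your geometric series explicitly.

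The justification you sketch for the discarded rescaling route does not hold as written, since e.g.\ $(\cosh 1)^{-1}+6^{-1/4}>1$, so it is good that you do not rely on it.
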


Proof. We may assume that $f$ is bounded. In this case 
$$
M:=\sup_{\tau}\esssup u_{\tau}<\infty.
$$
 Define
$\tau'$ as the first time $x_{\tau +s}$ hits $\bar B_{1/2}(y) $
 and $\tau''$ as the first time
after $\tau'$ when $x_{\tau+s}$ exits from
$B_{2}(y) $.
Notice that 
by Lemma \ref{lemma 7.18.1},
$$
I_{\tau+\tau'<\infty}
E_{\cF_{\tau+\tau'}}e^{-\lambda_{0}(\tau''-\tau')}\leq \nu_{0}.
$$
Furthermore, owing to Theorem \ref{theorem 7.29.1} 
$$
v_{\tau}:=E_{\cF_{\tau}}\int_{0}^{\tau''}e^{-\lambda_{0}s}
f(\sft_{\tau+s},x_{\tau+s})\,ds
\leq N_{1} \|f\|_{L_{(\hat p,\hat q)}}.
$$

Now observe that  
$$
u_{\tau}=v_{\tau}+E_{\cF_{\tau}}e^{-\lambda_{0}\tau''}
E_{\cF_{\tau+\tau''}}\int_{0}^{\infty}e^{-\lambda_{0}s}
f(\sft_{\tau+\tau''+s},x_{\tau+\tau''+s})
 \,ds
$$
$$
\leq N_{1}\|f\|_{L_{(\hat p,\hat q)}}+\nu_{0}M.
$$
It follows that $M\leq (1-\nu_{0})^{-1}N_{1}
\|f\|_{L_{(\hat p,\hat q)}}$, which proves the lemma.
\qed

\begin{corollary}
                     \label{corollary 7.29.1}
Let a Borel $f \geq0$ be such that
$f(t,x)=0$ for $x\not\in B_{1/2}(y)$ for some $y$ and let $\hat p,\hat q $ satisfy 
\eqref{7.29.2}. Then
\begin{equation}
                            \label{7.29.1}
I:=E_{\cF_{\tau}}\int_{0}^{\infty}e^{-\lambda_{0}s}
f(\sft_{\tau+s},x_{\tau+s})\,ds\leq N 
\Phi(|y-x_{\tau}|)\|f\|_{L_{(\hat p,\hat q)}} 
\end{equation}
with probability one, 
 where $N$ depends only on $d,\delta,p,q, \hat p,\hat q$ and
$$
\Phi(z)=e^{z \ln\nu_{0}}\quad (\ln \nu_{0}<0).
$$

\end{corollary}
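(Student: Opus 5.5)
The plan is to combine Lemma \ref{lemma 7.28.1} with the exit-time bound of Corollary \ref{corollary 8.30.1}, using the strong Markov structure encoded in conditional expectations at stopping times.

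First, the case $|y-x_{\tau}|\leq 2$ (more precisely, $|y-x_\tau|$ below a fixed constant). Here $\Phi(|y-x_{\tau}|)$ is bounded below by $\nu_0^{2}>0$, so \eqref{7.29.1} follows directly from \eqref{7.28.2} after enlarging $N$. In general we may assume $|y-x_\tau|>1$ and set $\rho=|y-x_{\tau}|-1/2$, which is $\cF_{\tau}$-measurable. As in Remark \ref{remark 8.1.2}, parameters that are $\cF_\tau$-measurable may be treated as constants, for example by partitioning according to the values of $\lfloor |y-x_\tau|\rfloor$ or by a joint measurability argument as in the proof of Lemma \ref{lemma 7.27.2}.

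Next, let $\sigma$ be the first exit time of $x_{\tau+s}$ from $B_{\rho}(x_{\tau})$. Before $\sigma$ the process stays in $B_\rho(x_\tau)$, which is disjoint from $B_{1/2}(y)$, so $f(\sft_{\tau+s},x_{\tau+s})=0$ for $s<\sigma$. Hence
$$
I=E_{\cF_{\tau}}e^{-\lambda_{0}\sigma}E_{\cF_{\tau+\sigma}}\int_{0}^{\infty}e^{-\lambda_{0}s}f(\sft_{\tau+\sigma+s},x_{\tau+\sigma+s})\,ds .
$$
Applying Lemma \ref{lemma 7.28.1} with the stopping time $\tau+\sigma$ bounds the inner conditional expectation by $N\|f\|_{L_{(\hat p,\hat q)}}$ on $\{\tau+\sigma<\infty\}$. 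On the complementary set the exponential factor vanishes, since $\lambda_0>0$. Corollary \ref{corollary 8.30.1} then gives $E_{\cF_\tau}e^{-\lambda_0\sigma}\leq \nu_0^{\lfloor\rho\rfloor}\leq \nu_0^{-2}\nu_0^{|y-x_\tau|}$, and $\nu_0^{|y-x_\tau|}=\Phi(|y-x_\tau|)$. Combining these yields \eqref{7.29.1}.

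The main obstacle is the rigorous handling of the random radius $\rho$ in Corollary \ref{corollary 8.30.1}. That corollary is obtained by iterating Lemma \ref{lemma 7.18.1} $\lfloor\rho\rfloor$ times, using successive exit times from unit balls centered at the current position. Each such exit increases the distance from $x_\tau$ by at most $1$, so the exit from $B_\rho(x_\tau)$ comes after at least $\lfloor\rho\rfloor$ unit-ball exits. I would verify that this iteration works with an $\cF_\tau$-measurable integer $n=\lfloor\rho\rfloor$. The approach is to apply the argument on each event $\{n=k\}\in\cF_\tau$ separately and then sum over $k$; this is legitimate because Assumption \ref{assumption 7.28.1} gives \eqref{7.18.010} at every stopping time.
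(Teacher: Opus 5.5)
Your proposal is correct and is essentially the paper's argument: Lemma \ref{lemma 7.28.1} handles $|y-x_\tau|\le 2$, and otherwise you condition at a stopping time before which $f(\sft_{\tau+s},x_{\tau+s})$ vanishes, bounding the inner conditional expectation by Lemma \ref{lemma 7.28.1} and the exponential factor by Corollary \ref{corollary 8.30.1}. The differences are cosmetic: the paper conditions at the hitting time of $\partial B_{1/2}(y)$ rather than at your exit time from $B_{|y-x_\tau|-1/2}(x_\tau)$, which is exactly what Corollary \ref{corollary 8.30.1} controls, and your factor $\nu_0^{-2}\Phi(|y-x_\tau|)$ is the correct one where the paper writes $\nu_0^{2}\Phi(\rho)$.
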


Indeed, if $\rho:=|y-x_{\tau}|\leq 2$, then  \eqref{7.29.1} follows from Lemma \ref{lemma 7.28.1}. However, if $\rho>2$, then   for $\gamma$ defined
as the first time $x_{\tau+s}$ hits $\partial
B_{1/2}(y)$, we have
$$
I=E_{\cF_{\tau}}e^{-\lambda_{0}\gamma}
E_{\cF_{\tau+\gamma} }\int_{0}^{\infty}e^{-\lambda_{0}s}
f(\sft_{\tau+\gamma+s},x_{\tau+\gamma+s})\,ds\leq N\|f\|_{L_{(\hat p,\hat q)}}
E_{\cF_{\tau}}e^{-\lambda_{0}\gamma},
$$
where $N$ is from \eqref{7.28.2}.
Then it only remains to notice that
the last expectation is dominated by $\nu_{0}^{2}\Phi(\rho)$ in light of Corollary \ref{corollary 8.30.1}.\qed

\begin{lemma}
                       \label{lemma 7.27.1}
Let
$\hat p,\hat q $ satisfy
\eqref{7.29.2}, $f$ be a  nonnegative Borel function   on $\bR^{d+1}$, and $\tau$
be a stopping time. Then on the set $\{\tau<\infty\}$ we have
\begin{equation}
                          \label{7.27.2}
I:=E_{\cF_{\tau}}\int_{0}^{\infty}e^{-\lambda_{0}s}
f(\sft_{\tau+s},x_{\tau+s})\,ds\leq N\|\Phi^{1/4}(\cdot-x_{\tau})f\|_{L_{(\hat p,\hat q)}} 
\end{equation}
with probability one, 
where $N$ depends only on $d,\delta,p,q, \hat p,\hat q$.
\end{lemma}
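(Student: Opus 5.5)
The plan is to decompose $f$ into pieces supported in small balls and apply Corollary \ref{corollary 7.29.1} to each piece. The resulting series is then summed with the help of the exponential decay of $\Phi$.

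Fix a lattice of points $y_{k}\in \bR^{d}$, $k\in\mathbb{Z}^{d}$, for instance $y_{k}=k/(2\sqrt d)$, such that the balls $B_{1/2}(y_{k})$ cover $\bR^{d}$. Choose a Borel partition $\{\Gamma_{k}\}$ of $\bR^{d}$ with $\Gamma_{k}\subset B_{1/2}(y_{k})$, for example small cubes centered at the $y_{k}$. Set $f_{k}(t,x)=f(t,x)I_{\Gamma_{k}}(x)$, so that $f=\sum_{k}f_{k}$. By monotone convergence for conditional expectations,
$$
I=\sum_{k}E_{\cF_{\tau}}\int_{0}^{\infty}e^{-\lambda_{0}s}f_{k}(\sft_{\tau+s},x_{\tau+s})\,ds
$$
with probability one on $\{\tau<\infty\}$. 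Corollary \ref{corollary 7.29.1} bounds each term by $N\Phi(|y_{k}-x_{\tau}|)\|f_{k}\|_{L_{(\hat p,\hat q)}}$. The corollary is stated for deterministic $y$, and $x_{\tau}$ is only $\cF_{\tau}$-measurable. This causes no difficulty: $y_{k}$ is deterministic and $x_{\tau}$ enters only through $\Phi$, which is the situation Remark \ref{remark 8.1.2} covers. Since only countably many $k$ are involved, the exceptional null sets can be combined.

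Next, convert each factor $\Phi(|y_{k}-x_{\tau}|)\|f_{k}\|$ into an expression containing $\Phi^{1/4}(\cdot-x_{\tau})f$. For $x\in\Gamma_{k}$ we have $|x-x_{\tau}|\leq |y_{k}-x_{\tau}|+1/2$. Since $\Phi(z)=\nu_{0}^{z}$ is decreasing, this gives $\Phi(|x-x_{\tau}|)\geq \nu_{0}^{1/2}\Phi(|y_{k}-x_{\tau}|)$, and hence
$$
\Phi^{1/4}(|y_{k}-x_{\tau}|)\,\|f_{k}\|_{L_{(\hat p,\hat q)}}\leq \nu_{0}^{-1/8}\|\Phi^{1/4}(\cdot-x_{\tau})f_{k}\|_{L_{(\hat p,\hat q)}}\leq N\|\Phi^{1/4}(\cdot-x_{\tau})f\|_{L_{(\hat p,\hat q)}}.
$$
The last inequality holds because $|f_{k}|\leq |f|$ pointwise and the mixed norm is monotone. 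Therefore
$$
I\leq N\|\Phi^{1/4}(\cdot-x_{\tau})f\|_{L_{(\hat p,\hat q)}}\sum_{k}\Phi^{3/4}(|y_{k}-x_{\tau}|).
$$
The last sum is bounded by a constant depending only on $d$, uniformly in $x_{\tau}$. The number of lattice points in a shell $\{m\leq|y_{k}-x_{\tau}|<m+1\}$ is at most $N(d)(m+1)^{d-1}$, while $\Phi^{3/4}$ decays like $\nu_{0}^{3m/4}$. This completes the argument.

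None of these steps is a serious obstacle. The only points needing care are the measurability of the $\cF_{\tau}$-dependent center $x_{\tau}$, which is handled as in Lemma \ref{lemma 7.27.2} by freezing it as a parameter, and the summation over $k$. The summation is not done by the triangle inequality in the norm of $f$ itself, which would lose the factor $\Phi^{1/4}$. Instead, three quarters of the exponential decay of $\Phi$ are spent on summability and one quarter is kept inside the norm. Any fixed power of $\Phi$ strictly between $0$ and $1$ would work equally well, and $1/4$ is simply the choice made in the statement.
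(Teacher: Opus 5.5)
Your proof is correct, but it takes a different and more elementary route than the paper.

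The paper uses a continuous decomposition. With $\zeta=|B_{1/2}|^{-1}I_{B_{1/2}}$ and $f_{y}=f\zeta(\cdot-y)$, one has $f=\int_{\bR^{d}}f_{y}\,dy$. Corollary~\ref{corollary 7.29.1} under the integral then gives $I\leq N\int_{\bR^{d}}\Phi(|y-x_{\tau}|)\|f_{y}\|_{L_{(\hat p,\hat q)}}\,dy$. The paper next applies H\"older's inequality twice in $y$, each time splitting off a power of $\Phi$, and moves the $y$-integral inside the mixed norm. This requires separate computations for the cases $\infty>\hat p\geq\hat q$, $\infty>\hat q>\hat p$, $\hat q=\infty$, and $\hat p=\infty$.

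You instead use a lattice partition and the crude bound $\Phi^{1/4}(|y_{k}-x_{\tau}|)\|f_{k}\|\leq N\|\Phi^{1/4}(\cdot-x_{\tau})f\|$ for every single $k$. This is a sup-type control of the pieces, paid for by the summable factor $\Phi^{3/4}$. It uses nothing about the mixed norm beyond its monotonicity, so it treats all $\hat p,\hat q\in[1,\infty]$ at once with no case analysis.

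The paper's averaging over translates avoids discretization and is the same device it reuses in the proof of Theorem~\ref{theorem 7.31.1}. For the inequality as stated, however, the H\"older steps give nothing your cruder bound does not; the same sup trick would work directly on the paper's integral over $y$.

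Your worry about the randomness of $x_{\tau}$ is unnecessary. Corollary~\ref{corollary 7.29.1} is already stated with a deterministic center $y$ and the random $x_{\tau}$ inside $\Phi$, which is exactly how you apply it.
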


Proof. Set $\zeta(x)=cI_{B_{1/2}}(x)$, where
$c^{-1}=|B_{1/2}|$ and for $y\in \bR^{d}$ set
$$
f_{y}(t,x)=f(t,x)\zeta(x-y).
$$
Then by Corollary \ref{corollary 7.29.1}
$$
I=\int_{\bR^{d}}E_{\cF_{\tau}}\int_{0}^{\infty}e^{-\lambda_{0}s}
f_{y}(\sft_{\tau+s},x_{\tau+s})\,dsdy
$$
$$
\leq N\int_{\bR^{d}}\Phi(y-x_{\tau})\|f_{y}\|_{L_{(\hat p,\hat q)}(\bR^{d+1} )}\,dy.
$$

First let $\infty>\hat p\geq \hat q$. 
Introduce
$$
M_{1}^{\hat q/(\hat q-1)}=\int_{\bR^{d}}
\Phi^{\hat q/(2\hat q-2)}\,dy,\quad \hat q>1,
\quad M_{1}=1, \quad \hat q=1,
$$
$$
M_{2}^{\hat p\hat q/(\hat p-\hat q)}=\int_{\bR^{d}}
\Phi^{\hat p\hat q/(4\hat p-4 \hat q)}\,dy,
\quad \hat p\ne \hat q,\quad M_{2}=1,
\quad \hat p=\hat q.
$$
By observing that $\Phi^{\alpha}...=\Phi^{\alpha/2}(\Phi^{\alpha/2}...)$ and applying 
H\"older's inequality  twice we obtain
$$
I\leq NM_{1}\Big(\int_{\bR} \,dt\int_{\bR^{d}}\Phi^{\hat q/2}(y-x_{\tau})
\Big(\int_{\bR^{d}}f^{\hat p}_{y}(t,x)\,dx\Big)^{\hat q/\hat p}\,dy\Big)^{1/\hat q}
$$
$$
\leq NM_{1}M_{2}
\Big(\int_{\bR} \,dt\Big(\int_{\bR^{d}}\,dx\int_{\bR^{d}}
\,dy\,\Phi^{\hat p/4}(y-x_{\tau})
 f^{\hat p}_{y}(t,x) \Big)^{\hat q/\hat p} \Big)^{1/\hat q}.
$$
Here on account of changing $N$ one can
replace $\Phi^{\hat p/4}(y-x_{\tau})$ with
$\Phi^{\hat p/4}(x-x_{\tau})$ since they are comparable
if $\zeta(x-y)>0$. Then, since
$$
\int_{\bR^{d}}
\,\Phi^{\hat p/4}(x-x_{\tau})
 f^{\hat p}_{y}(t,x)\,dy=N\Phi^{\hat p/4}(x-x_{\tau})
 f^{\hat p} (t,x),
$$
we get \eqref{7.27.2}.

The case that $\infty>\hat q>\hat p$ 
 is quite similar to the previous one.
In case $\hat p\leq \hat q=\infty$
$$
I\leq N\int_{\bR^{d}}\Phi(y-x_{\tau})\Big(
\int_{\bR^{d}}\sup_{t\geq0}|f_{y}(t,x)|^{\hat p}\,dx\Big)^{1/\hat p}\,dy
$$
$$
\leq N\Big(\int_{\bR^{d}}\int_{\bR^{d}}
\Phi^{\hat p/2}(y-x_{\tau})\sup_{t\geq0}|f_{y}(t,x)|^{\hat p}\,dxdy\Big)^{1/\hat p}
$$
$$
\leq N\Big(\int_{\bR^{d}}\int_{\bR^{d}}
 \zeta^{\hat p}(x-y)\sup_{t }|\Phi^{1/2}(x-x_{\tau})f (t,x)|^{\hat p}\,dxdy\Big)^{1/\hat p}
$$
$$
=N\|\Phi^{1/2}(\cdot-x_{\tau})f\|_{L_{(\hat p,\hat q)}}.
$$

In the remaining  case   $\hat p=\infty>\hat q$  we have
$$
I\leq N\int_{\bR}\Phi(y-x_{\tau})\Big(\int_{\bR}\sup_{x\in\bR^{d}}|f_{y}(t,x)|^{\hat q}\,dt\Big)^{1/\hat q}\,dy
$$
$$
\leq N\Big(\int_{\bR}\int_{\bR^{d}}\Phi^{\hat q/4}(y-x_{\tau})
\sup_{x\in\bR^{d}}|\Phi^{1/4}(x-x_{\tau})f (t,x)|^{\hat q}\,dydt\Big)^{1/\hat q}
$$
$$
=N\|\Phi^{1/4}(\cdot-x_{\tau})f\|_{L_{(\hat p,\hat q)}}.
$$

 \qed

\begin{theorem}
                       \label{theorem 7.31.1}
Under the assumption of Lemma \ref{lemma 7.27.1}  on the set $\{\tau<\infty\}$ we have
 (notice $2\lambda_{0}$)
\begin{equation}
                          \label{7.27.20}
 I:=E_{\cF_{\tau}}\int_{0}^{\infty}e^{-2\lambda_{0} s}
f(\sft_{\tau+s},x_{\tau+s})\,ds\leq N_{0}\sup_{C\in\bC_{1}}
\| \Psi(\cdot-\sft_{\tau},\cdot-x_{\tau})f \|_{L_{(\hat p,\hat q)}(C)} 
\end{equation}
with probability one,
where $N_{0}=N_{0}( d,\delta,p,q, \hat p,\hat q) 
  $ and 
  $$
  \Psi(t,x)=e^{-\lambda_{0}t/2}\Phi^{1/8}(x)I_{t>0}.
  $$
Furthermore, if
$$
M:=\sup_{C\in\bC_{1}}
\|  f \|_{L_{(\hat p,\hat q)}(C)}<\infty,  
$$
then
\begin{equation}
                          \label{8.2.1}
  E_{\cF_{\tau}}\exp\Big(\frac{1}{2MN_{0}}\int_{0}^{\infty}e^{-2\lambda_{0} s}
f(\sft_{\tau+s},x_{\tau+s})\,ds\Big)\leq 2
\end{equation}
with probability one.
\end{theorem}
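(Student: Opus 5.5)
Two steps: first the localized bound \eqref{7.27.20}, by slicing $f$ in time and applying Lemma \ref{lemma 7.27.1} to each slice; then \eqref{8.2.1}, by the standard Khas'minskii-type argument.

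\emph{Step 1: proof of \eqref{7.27.20}.} Decompose $f=\sum_{k\geq0}f_{k}$, where
$$
f_{k}(t,x)=f(t,x)I_{\sft_{\tau}+k\leq t<\sft_{\tau}+k+1}.
$$
Only $t\geq \sft_{\tau}$ matters, since we evaluate at $\sft_{\tau+s}=\sft_{\tau}+s$, $s\geq0$.

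For the $k$-th piece, apply the strong Markov property at the time $\tau+k$ (on $\{\tau<\infty\}$; times before $\tau+k$ contribute nothing). Then use Lemma \ref{lemma 7.27.1} at the stopping time $\tau+k$. This gives
$$
E_{\cF_{\tau}}\int_{0}^{\infty}e^{-2\lambda_{0}s}f_{k}(\sft_{\tau+s},x_{\tau+s})\,ds
\leq Ne^{-2\lambda_{0}k}E_{\cF_{\tau}}\|\Phi^{1/4}(\cdot-x_{\tau+k})f_{k}\|_{L_{(\hat p,\hat q)}}.
$$

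The main obstacle is that the weight is centered at the random point $x_{\tau+k}$, not at $x_{\tau}$. Since $\Phi(z)=\nu_{0}^{|z|}$, the triangle inequality gives
$$
\Phi^{1/4}(x-x_{\tau+k})\leq \Phi^{1/8}(x-x_{\tau})\,\nu_{0}^{-|x_{\tau+k}-x_{\tau}|/8}.
$$
It therefore suffices to bound $E_{\cF_{\tau}}\nu_{0}^{-|x_{\tau+k}-x_{\tau}|/4}$ (the square, via Cauchy--Schwarz) by $Ne^{\lambda_{0}k/2}$. This is an exponential moment of the displacement. Corollary \ref{corollary 8.30.1} gives $E_{\cF_\tau}e^{-\lambda_0\tau_{n}}\leq\nu_{0}^{n}$, which controls large displacements in short time. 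Hence
$$
P_{\cF_\tau}\big(\sup_{s\leq k}|x_{\tau+s}-x_\tau|\geq n\big)\leq e^{\lambda_0 k}\nu_0^{n}.
$$
Summing over $n$, with the small power $\nu_0^{-n/4}$ against $\nu_0^{n}$, yields $\leq Ne^{\lambda_0 k}$.

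This growth $e^{\lambda_{0}k}$ is why the statement uses $2\lambda_{0}$ rather than $\lambda_{0}$. After absorbing it, a factor $e^{-\lambda_{0}k}$ remains, i.e. $e^{-\lambda_0 k/2}$ to spare beyond what $\Psi$ needs. If the constants refuse to cooperate, I would instead weaken the power of $\Phi$, which is why only $\Phi^{1/8}$ appears in $\Psi$.

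Next, bound the $L_{(\hat p,\hat q)}$ norm over the strip $[\sft_\tau+k,\sft_\tau+k+1)\times\bR^{d}$ by a supremum over cylinders. Cover the strip by the cylinders $C_{1}(\sft_\tau+k+j,z)$ with $j=0$ and $z\in\bZ^{d}$ scaled (time length $1$). On each cylinder $\Phi^{1/8}(x-x_\tau)$ is comparable to $\Phi^{1/8}(z-x_\tau)$. Using Minkowski and $\ell_{\hat p}\subset\ell_1$-type summation with a leftover factor $\Phi^{1/16}$, which is summable over $z$, the strip norm is bounded by $N\sup_{C\in\bC_1}\|\Phi^{1/8}(\cdot-x_\tau)f_k\|_{L_{(\hat p,\hat q)}(C)}$. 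Actually, to keep a spare decay in space, one should use $\Phi^{1/4}$ up to $\Phi^{1/8}$ in the splitting above. The spare $e^{-\lambda_0 k/2}$ provides the time factor $e^{-\lambda_0 t/2}$ in $\Psi$, up to a constant, and the remaining geometric sum over $k$ is finite. This gives \eqref{7.27.20}.

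\emph{Step 2: proof of \eqref{8.2.1}.} Since $\Psi\leq1$, \eqref{7.27.20} gives $E_{\cF_{\sigma}}\int_{0}^{\infty}e^{-2\lambda_0 s}f(\sft_{\sigma+s},x_{\sigma+s})\,ds\leq N_0M$ for every stopping time $\sigma\geq\tau$. Set $A_t=\int_0^t e^{-2\lambda_0 s}f(\sft_{\tau+s},x_{\tau+s})\,ds$. Since $e^{-2\lambda_0(\sigma+s)}\leq e^{-2\lambda_0 s}$, this implies $E_{\cF_{\tau+\sigma}}(A_\infty-A_\sigma)\leq N_0M$. Khas'minskii's lemma then gives $E_{\cF_\tau}A_\infty^n\leq n!(N_0M)^n$, via $A_\infty^n=n\int_0^\infty(A_\infty-A_t)^{n-1}dA_t$ and induction. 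Therefore
$$
E_{\cF_\tau}\exp\big(A_\infty/(2N_0M)\big)\leq\sum_{n}2^{-n}=2.
$$
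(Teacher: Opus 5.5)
Your argument is correct, but it takes a detour that the paper avoids.

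\textbf{Your route.} You slice $f$ in time and restart at $\tau+k$. This really uses only the tower property and Lemma~\ref{lemma 7.27.1} at the stopping time $\tau+k$, with the $\cF_{\tau+k}$-measurable parameter $\sft_\tau$; no Markov property is available in Section~\ref{section 8.2.1}, and none is needed. You then pay for the displaced center $x_{\tau+k}$ with the exponential moment $E_{\cF_\tau}\nu_0^{-|x_{\tau+k}-x_\tau|/4}\leq Ne^{\lambda_0 k}$, taken from Corollary~\ref{corollary 8.30.1}. The extra $\lambda_0$ in $2\lambda_0$ absorbs this cost.

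\textbf{The paper's route.} The paper never moves the center. Since $\sft_{\tau+s}=\sft_\tau+s$, it writes $e^{-2\lambda_0 s}f(\sft_{\tau+s},x_{\tau+s})=e^{-\lambda_0 s}g(\sft_{\tau+s},x_{\tau+s})$ with $g(t,x)=e^{-\lambda_0(t-\sft_\tau)}I_{t\geq\sft_\tau}f(t,x)$. This $g$ depends on $\omega$ only through the $\cF_\tau$-measurable $\sft_\tau$, so Lemma~\ref{lemma 7.27.1} can be applied once, at $\tau$. That directly gives the weight $e^{\lambda_0(\sft_\tau-t)}\Phi^{1/4}(x-x_\tau)$. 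The paper then passes to unit cylinders by writing $g$ as an average over shifted copies of $C_1$ and using Minkowski's inequality. It splits the weight into $\Psi$ times $e^{\lambda_0(\sft_\tau-t)/2}\Phi^{1/8}(x-x_\tau)$; the second factor is integrable over $\bR^{d+1}_{\sft_\tau}$, which plays the role of your sums over $k$ and over spatial cells.

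\textbf{What each buys.} In the paper the doubled rate only supplies the time decay in $\Psi$ plus a summable spare factor, and Corollary~\ref{corollary 8.30.1} is never used. Your version needs that extra input but is otherwise sound.

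\textbf{Bookkeeping.} The clean choice is the one you eventually reach:
\begin{itemize}
\item keep $\Phi^{1/4}(\cdot-x_\tau)$ in the triangle inequality;
\item pull the $\cF_\tau$-measurable norm $\|\Phi^{1/4}(\cdot-x_\tau)f_k\|_{L_{(\hat p,\hat q)}}$ out of $E_{\cF_\tau}$, so no Cauchy--Schwarz is needed;
\item on the $k$-th slice use $e^{-\lambda_0 k}\leq e^{\lambda_0/2}e^{-\lambda_0 k/2}e^{-\lambda_0(t-\sft_\tau)/2}$;
\item spend the spare factor $\Phi^{1/8}$ on summing over the spatial cells.
\end{itemize}

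Your Khas'minskii step coincides with the paper's.
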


Proof. Note that
$$
e^{-2\lambda_{0} s}
f(\sft_{\tau+s},x_{\tau+s})=e^{-\lambda_{0} s}
\big(e^{-\lambda_{0}\sft_{\tau+ s}}
f(\sft_{\tau+s},x_{\tau+s})I_{\sft_{\tau+s}
\geq \sft_{\tau}}e^{\lambda \sft_{\tau}}\big).
$$
Then by Lemma \ref{lemma 7.27.1}  
$$
I\leq N \|fe^{\lambda_{0}(\sft_{\tau}-\cdot)}\Phi^{1/4}(\cdot-
x_{\tau})\|
_{L_{(\hat p,\hat q)}(\bR^{d+1}_{\sft_{\tau}} )},
$$
where $\bR^{d+1}_{t}=[t,\infty)\times\bR^{d}$.
 
Next, let $\zeta=c^{-1}I_{C_{1}}$, where $c=|C_{1}|$
and $g(t,x)=f(t,x)e^{ \lambda_{0}(\sft_{\tau}-t)}\Phi^{1/4}(x-x_{\tau})$.
Then for $t\geq \sft_{\tau}$
$$
g(t,x)=\int_{\bR^{d+1}_{\sft_{\tau}}}\zeta(s-t,y-x)g(t,x)
\,dyds
$$
$$
\leq N\int_{\bR^{d+1}_{\sft_{\tau}}}\big(e^{\lambda_{0}(\sft_{\tau}-s)/2}\Phi^{1/8}(y
-x_{\tau})\big)\zeta(s-t,y-x)\Psi (t-\sft_{\tau},x-x_{\tau})f(t,x)\,dyds.
$$
It follows by Minkowski's inequality that
$$
I\leq N\int_{\bR^{d+1}_{\sft_{\tau}}}
e^{ \lambda_{0}(\sft_{\tau}-s)/2}\Phi^{1/8}(y-x_{\tau})
\|\zeta(s-\cdot,y-\cdot) \Psi(\cdot-\sft_{\tau},\cdot-x_{\tau})f\|
_{L_{(\hat p,\hat q)} }\,dyds.
$$
Now, to get \eqref{7.27.20}, it only remains to observe that the last
norm is less than $1/c$ times $\|  \Psi(\cdot-\sft_{\tau},\cdot-x_{\tau})f\|
_{L_{(\hat p,\hat q)}(C_{1}(s-1,y)) }$. 

Since the right-hand side of \eqref{7.27.20}
is less than $N_{0}M$ and $\tau$ is arbitrary,
\eqref{8.2.1} follows from \eqref{7.27.2}
by Khasminskii's lemma.
\qed

\mysection{Solvability of \eqref{4.27.10}}

We  need the
  following two
  results  due to A.V. Skorokhod
(see Ch.~1, \S6 and Ch.~2, \S3 in \cite{Sk_61}).  
\begin{lemma}
                           \label{lemma 4.23.1}
  Suppose that $d_1$-dimensional random
 processes $\xi^{(n)}_{t}$  $(t\geq 0, n =  
1,2, . . .)$ are defined
 on some probability spaces equipped with probability measures $P^{n}$. Assume that for each $T> 0$ and
$\varepsilon > 0$
\begin{equation}
                                                  \label{5.10.5}
\lim_{c\to\infty} \sup_{n} \sup_{t\leq T} P^{n} (|\xi^{(n)}_{t}|>c) = 0,
\end{equation}
\begin{equation}
                                                  \label{5.10.6}
\lim_{h\downarrow 0} \sup_{n} \sup_{\substack{t_{1},t_{2}\leq T\\
|t_{1}-t_{2}|\leq h}} P^{n}(|\xi^{(n)}_{t_{1}}-\xi^{(n)}_{t_{2}}|>
\varepsilon)=0.
\end{equation}
Then  one can find a sequence of integers $n'\to\infty$,
 a probability space equipped with a probability measure
 $P$, and
random processes $\tilde \xi_{t},\tilde \xi_{t}^{(n')}$
 defined on this probability space such that all finite-dimensional
distributions of $\tilde \xi_{t}^{(n')}$ coincide with 
the corresponding finite-dimensional
distributions of $\xi_{t}^{(n')}$ and 
$$
P (|\tilde \xi_{t}-\tilde \xi_{t}^{(n')}|>\varepsilon) \to 0   
$$
as $n'\to\infty$ for any $\varepsilon>0$ and $t\geq0$.
\end{lemma}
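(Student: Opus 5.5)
This is Skorokhod's lemma, and I would prove it by combining Prokhorov-type tightness of the finite-dimensional distributions with a representation of weak convergence by almost sure convergence on a common probability space. The first step is to reduce the problem to a countable dense set of times. Let $Q=\{r_{1},r_{2},\dots\}$ be the nonnegative rationals. By \eqref{5.10.5}, for each fixed $r_{k}$ the laws of $\xi^{(n)}_{r_{k}}$ are tight in $\bR^{d_{1}}$. Hence the laws of the sequence $(\xi^{(n)}_{r_{k}})_{k\geq1}$, viewed as random elements of $(\bR^{d_{1}})^{\infty}$ with the product topology, are tight. That space is Polish, so Prokhorov's theorem gives a subsequence $n'$ along which these laws converge weakly to some measure $\mu$ on $(\bR^{d_{1}})^{\infty}$.

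The second step is to realize all of these objects on one probability space. Skorokhod's representation theorem, or the direct construction on $([0,1],\text{Lebesgue})$ from Skorokhod's book, provides random sequences $(\tilde\xi^{(n')}_{r_{k}})_{k}$ and $(\eta_{r_{k}})_{k}$ on a single space. Each $(\tilde\xi^{(n')}_{r_{k}})_{k}$ has the law of $(\xi^{(n')}_{r_{k}})_{k}$, $(\eta_{r_{k}})_{k}$ has law $\mu$, and $\tilde\xi^{(n')}_{r_{k}}\to\eta_{r_{k}}$ almost surely for every $k$. To extend $\tilde\xi^{(n')}$ to irrational $t$ while keeping every finite-dimensional distribution equal to that of $\xi^{(n')}$, I would use conditional distributions. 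Enlarge the space by a product with $[0,1]^{\infty}$, and for each $n'$ draw $\tilde\xi^{(n')}_{t}$, $t\notin Q$, from a regular conditional law of $(\xi^{(n')}_{t})_{t\notin Q}$ given $(\xi^{(n')}_{r})_{r\in Q}$. The Kolmogorov extension theorem, applied on products of Polish spaces, makes this work for finite-dimensional distributions; it could also be done one finite set at a time, but only the finite-dimensional distributions matter, so this is enough.

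The third step defines the limit process and proves convergence in probability. On $Q$ the limit is simply $\tilde\xi_{r}=\eta_{r}$. For general $t$ I would show that $\eta_{r}$ is Cauchy in probability as $r\to t$ with $r\in Q$. For $r,s\in Q$ with $|r-s|\leq h$, since $\{|x-y|>\varepsilon\}$ is open, the portmanteau theorem together with \eqref{5.10.6} gives
\begin{equation*}
P(|\eta_{r}-\eta_{s}|>\varepsilon)\leq\liminf_{n'\to\infty} P^{n'}(|\xi^{(n')}_{r}-\xi^{(n')}_{s}|>\varepsilon),
\end{equation*}
and the right-hand side is small uniformly once $h$ is small. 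So $\eta_{r}$ converges in probability as $r\to t$, and I define $\tilde\xi_{t}$ as this limit. Then, for $r\in Q$ close to $t$,
\begin{equation*}
P(|\tilde\xi_{t}-\tilde\xi^{(n')}_{t}|>3\varepsilon)\leq P(|\tilde\xi_{t}-\eta_{r}|>\varepsilon)+P(|\eta_{r}-\tilde\xi^{(n')}_{r}|>\varepsilon)+P^{n'}(|\xi^{(n')}_{r}-\xi^{(n')}_{t}|>\varepsilon).
\end{equation*}
The first and third terms are small uniformly in $n'$, by the construction of $\tilde\xi_{t}$ and by \eqref{5.10.6} respectively. The middle term tends to zero as $n'\to\infty$ because of the almost sure convergence on $Q$.

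I expect the main obstacle to be the second step: constructing $\tilde\xi^{(n')}_{t}$ at non-rational times on the common space so that every finite-dimensional distribution agrees exactly, and not merely the distributions at rational times. An alternative that avoids conditional laws is to use only a countable set of times and to note that the subsequent applications need only countably many $t$; I would fall back on that if the measure-theoretic construction gets heavy.
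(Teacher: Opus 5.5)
\textbf{There is a genuine gap in your Step 2, exactly where you suspected.} Your Steps 1 and 3 are correct. The overall scheme is the standard one behind the result the paper cites from Skorokhod; the paper itself gives no proof. That scheme is: tightness at rational times from \eqref{5.10.5}, Prokhorov plus Skorokhod's representation in $(\bR^{d_1})^{\infty}$, then passing to arbitrary $t$ via \eqref{5.10.6}.

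The Step 2 problem is that a regular conditional law of the uncountable family $(\xi^{(n')}_t)_{t\notin Q}$ given $(\xi^{(n')}_r)_{r\in Q}$ does not come out of the Kolmogorov extension theorem the way you describe. For each finite $F\subset[0,\infty)\setminus Q$ you get a kernel $K_F(y,\cdot)$. However, the projection of $K_F(y,\cdot)$ onto the $F'$-coordinates equals $K_{F'}(y,\cdot)$ only for $y$ outside a null set depending on the pair $F'\subset F$. There are uncountably many such pairs, so you cannot fix one $y$ and extend. The usual existence theorems for regular conditional distributions need a standard Borel target, and $(\bR^{d_1})^{[0,\infty)\setminus Q}$ with the product $\sigma$-field is not one.

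Doing it ``one finite set at a time'' gives different random vectors for different $F$, not one process. You do need one process: the third term in your Step 3 uses the joint law of $(\tilde\xi^{(n')}_r,\tilde\xi^{(n')}_t)$ with $r\in Q$, $t\notin Q$. Your fallback of using only countably many $t$ does not prove the lemma as stated. It would not suffice for the paper either, since Step 2 of Theorem \ref{theorem 9.6.4} needs convergence in probability for (almost) every $t$. That is what gives the $L_1(\Omega\times[0,T])$-Cauchy property and the full-measure set $\cS$ used with Remark \ref{remark 5.11.1}.

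\textbf{The missing idea: no conditional law is needed.} By \eqref{5.10.6}, each fixed $\xi^{(n)}$ is stochastically continuous. So for every $t$ there are rationals $r_k\to t$ (depending on $n$ and $t$) with $\xi^{(n)}_{r_k}\to\xi^{(n)}_t$ almost surely. Hence $\xi^{(n)}_t=g_{n,t}\big((\xi^{(n)}_r)_{r\in Q}\big)$ a.s., where $g_{n,t}$ is the Borel map on $(\bR^{d_1})^{Q}$ given by $g_{n,t}(y)=\lim_k y_{r_k}$ when the limit exists and $0$ otherwise. Define $\tilde\xi^{(n')}_t:=g_{n',t}\big((\tilde\xi^{(n')}_r)_{r\in Q}\big)$.

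Now fix $t_1,\dots,t_m$. The vector $(\tilde\xi^{(n')}_{t_i})_{i\le m}$ is the image of $(\tilde\xi^{(n')}_r)_{r\in Q}$ under a fixed Borel map. That same map sends $(\xi^{(n')}_r)_{r\in Q}$ to $(\xi^{(n')}_{t_i})_{i\le m}$ almost surely. Since the two rational-time sequences have the same law, all finite-dimensional distributions agree, and no enlargement of the probability space is required. In effect, the conditional law you were looking for is a point mass. With this construction, your Step 3 goes through verbatim.
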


\begin{lemma}
                                               \label{lemma 4.23.2}
Suppose that on a complete
 probability space $(\Omega,\cF,P)$
we are given random processes $\xi^{(n)}_{t}$,
$w^{(n)}_{t}$, $n=0,1,2,...$. Suppose that
the assumptions of Lemma  \ref{lemma 4.23.1} 
 are satisfied and
\begin{equation}
                                                     \label{5.14.1}
 \xi^{(n)}_{t}\to \xi^{(0)}_{t},\quad w^{(n)}_{t}\to w^{(0)}_{t}
\end{equation}
in probability as $n\to\infty$
for each $t\geq0$. Finally, assume that   $w^{(n)}_{t}$ are $d_{1}$-dimensional
Wiener processes relative to some
increasing families of complete $\sigma$-fields
$\cF_{t}^{n}\subset\cF$,
$t\geq0$, $n=0,1,2,...$, the functions 
$\xi^{(n)}_{t}(\omega)$ are bounded on $[0,\infty)\times\Omega$
uniformly in $n$, and each of them is
progressively measurable relative to $\cF^{n}_{t}$. 

Then the stochastic integrals 
$$
I^{n}_{t}:=\int_{0}^{t}\xi^{(n)}_{s}\,dw^{(n)}_{s}
$$
 are well defined for $t\geq0$, $n=0,1,2,...$ and
   $I^{n}_{t}\to I^{0}_{t}$
in probability as $n\to\infty$
for each $t\geq0$.
\end{lemma}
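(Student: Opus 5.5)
The plan is to approximate every integrand $\xi^{(n)}$ by a step process built on a fixed dyadic grid, with an error that is small \emph{uniformly in} $n$. For the step processes, convergence of the stochastic integrals reduces to convergence of finite sums, which follows from \eqref{5.14.1}. A standard $3\varepsilon$ argument then gives $I^{n}_{t}\to I^{0}_{t}$.

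\emph{Well-definedness.} Let $K$ be a common bound for $|\xi^{(n)}_{t}(\omega)|$. Each $\xi^{(n)}$ is bounded and progressively measurable relative to $\cF^{n}_{t}$, and $w^{(n)}$ is a Wiener process relative to $\cF^{n}_{t}$. Hence $I^{n}_{t}$ is a well defined It\^o integral, and Itô's isometry
$$
E|I^{n}_{t}|^{2}=E\int_{0}^{t}|\xi^{(n)}_{s}|^{2}\,ds
$$
is available.

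\emph{Uniform approximation.} Fix $t\le T$. For $m\ge1$ let $\kappa_{m}(s)=2^{-m}\lfloor 2^{m}s\rfloor$, and define
$$
I^{n,m}_{t}=\int_{0}^{t}\xi^{(n)}_{\kappa_{m}(s)}\,dw^{(n)}_{s}
=\sum_{k}\xi^{(n)}_{k2^{-m}}\big(w^{(n)}_{(k+1)2^{-m}\wedge t}-w^{(n)}_{k2^{-m}\wedge t}\big).
$$
This is a legitimate integral, because $\xi^{(n)}_{k2^{-m}}$ is $\cF^{n}_{k2^{-m}}$-measurable. By Itô's isometry and the bound $|\xi^{(n)}|\le K$, for any $\varepsilon>0$,
$$
E|I^{n}_{t}-I^{n,m}_{t}|^{2}=E\int_{0}^{t}|\xi^{(n)}_{s}-\xi^{(n)}_{\kappa_{m}(s)}|^{2}\,ds
\le T\varepsilon^{2}+4K^{2}T\sup_{\substack{t_{1},t_{2}\le T\\|t_{1}-t_{2}|\le 2^{-m}}}P(|\xi^{(n)}_{t_{1}}-\xi^{(n)}_{t_{2}}|>\varepsilon).
$$

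By \eqref{5.10.6}, the supremum over $n\ge1$ of the right-hand side becomes smaller than $2T\varepsilon^{2}$ once $m$ is large. For $n=0$ the same bound holds: $\xi^{(n)}_{t_i}\to\xi^{(0)}_{t_i}$ in probability, so we may pass to the limit in $P(|\xi^{(n)}_{t_{1}}-\xi^{(n)}_{t_{2}}|>\varepsilon)$ after replacing $\varepsilon$ by $2\varepsilon$. Consequently
$$
\sup_{n\ge0}E|I^{n}_{t}-I^{n,m}_{t}|^{2}\to0\quad\text{as } m\to\infty.
$$
This uniformity in $n$ is the main point of the proof. It is exactly where \eqref{5.10.6} and the uniform bound $K$ are used. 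Note that no assumption of continuity of the paths of $\xi^{(n)}$ is needed, only continuity in probability.

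\emph{Passage to the limit and conclusion.} For fixed $m$, $I^{n,m}_{t}$ is a finite sum of products of $\xi^{(n)}_{k2^{-m}}$ and increments of $w^{(n)}$. By \eqref{5.14.1} each factor converges in probability to the corresponding $n=0$ quantity, so $I^{n,m}_{t}\to I^{0,m}_{t}$ in probability as $n\to\infty$. Now write
$$
I^{n}_{t}-I^{0}_{t}=(I^{n}_{t}-I^{n,m}_{t})+(I^{n,m}_{t}-I^{0,m}_{t})+(I^{0,m}_{t}-I^{0}_{t}).
$$
Using Chebyshev's inequality for the first and third terms, we get for any $\gamma>0$
$$
\limsup_{n\to\infty}P(|I^{n}_{t}-I^{0}_{t}|>3\gamma)\le 2\gamma^{-2}\sup_{n\ge0}E|I^{n}_{t}-I^{n,m}_{t}|^{2}.
$$
Letting $m\to\infty$ makes the right-hand side vanish, which proves the claim.
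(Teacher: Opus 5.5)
Your proof is correct, and it is the standard argument: the paper gives no proof of its own and cites Skorokhod, whose proof works the same way. The integrands are replaced by step processes on a fine partition, the error is controlled uniformly in $n$ through the It\^o isometry, the uniform bound and \eqref{5.10.6}, and \eqref{5.14.1} handles the finite sums. For the weaker version in Remark~\ref{remark 5.11.1}, where the hypotheses hold only on a set of full measure, take the partition points in that set rather than on a fixed dyadic grid; your estimates then go through essentially unchanged.
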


\begin{remark}
                                                 \label{remark 5.11.1}
As it follows from the proof of Lemma \ref{lemma 4.23.2}
given in \cite{Sk_61}
  we need conditions
\eqref{5.10.5}, \eqref{5.10.6}, and \eqref{5.14.1} to hold
only for $t,t_{1},t_{2}$ restricted to a set
of full measure in order for the assertion of the lemma to be true.
\end{remark}

 In the following Lemma 3.1.4 of \cite{Kr_25} the function $\sigma_{0}(t,x)$
is a bounded Borel $d\times d_{1}$-matrix valued
function on $\bR^{d+1}_{0}=[0,\infty)\times\bR^{d}$,
$b_{0}(t,x)$ is a Borel $\bR^{d}$-valued function
defined on the same set.

\begin{lemma} 
                                             \label{lemma 5.12.1}
Let $\bR^{d+d_{1}}$-valued processes $(x'_{t},w'_{t})$,  $(x''_{t},w''_{t})$ $t\geq0$,
$i=1,2$, 
defined on perhaps different complete probability spaces, have the same
finite-dimensional distributions. Define 
$\cF'_{t}$ as the  completion of
  $\sigma (x'_{s}, w'_{s}:s\leq t)$
and similarly define $\cF''_{t}$, assume that $w'_{t}$
is a Wiener process with respect to   $\cF'_{t}$ and $w''_{t}$
is a Wiener process with respect to   $\cF''_{t}$. Also suppose that $x'_{t}$ is continuous and
(a.s.) for all $t\geq 0$
\begin{equation}
                                              \label{5.13.1}
\int_{0}^{t}|b_{0}(s,x'_{s})|\,ds<\infty,\quad
x'_{t}=\int_{0}^{t} \sigma_{0}(s,x'_{s})\,dw'_{s}+
\int_{0}^{t}b_{0}(s,x'_{s})\,ds.
\end{equation}
Then $x''_{t},w''_{t}$ have   modifications 
(called again $x''_{t},w''_{t}$) such that $w''_{t}$
is a Wiener process with respect to   $\cF''_{t}$
and (a.s.) for all $t\geq 0$
\begin{equation}
                                              \label{5.13.2}
\int_{0}^{t}|b_{0}(s,x''_{s})|\,ds<\infty,\quad
x''_{t}=\int_{0}^{t} \sigma_{0}(s,x''_{s})\,dw''_{s}+
\int_{0}^{t}b_{0}(s,x''_{s})\,ds.
\end{equation}
\end{lemma}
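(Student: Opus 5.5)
The plan is to pass to the limit through the two stochastic equations at once, using that equality of finite-dimensional distributions makes every functional built from the trajectory $(x_{t},w_{t})$ identically distributed on the two spaces. Fix $t$ and form the random variable
$$
\zeta'_{t}:=x'_{t}-\int_{0}^{t}\sigma_{0}(s,x'_{s})\,dw'_{s}-\int_{0}^{t}b_{0}(s,x'_{s})\,ds,
$$
which vanishes a.s. by \eqref{5.13.1}. The goal is to show that the corresponding $\zeta''_{t}$ built from $(x''_{t},w''_{t})$ is well defined and has the same distribution, so it also vanishes a.s. A continuity argument then gives the identity simultaneously for all $t$.

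First I will choose modifications. Since $w''$ has the same finite-dimensional distributions as the Wiener process $w'$, it has a continuous modification, and this modification is still a Wiener process with respect to $\cF''_{t}$: the completed filtration is unchanged, and independence of increments from $\cF''_{t}$ is a statement about finite-dimensional distributions. For $x''$ I will use the same criterion applied to $x'$. Because $x'$ is continuous, Kolmogorov/Skorokhod-type tightness of finite-dimensional distributions shows that $x''$ has a continuous modification. Concretely, $x'$ restricted to the dyadic rationals is a.s. uniformly continuous on bounded intervals. This is an event determined by countably many coordinates, so the same holds for $x''$, and I extend $x''$ by continuity. These modifications also have the same finite-dimensional distributions.

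Next comes the approximation step. I will approximate $\sigma_{0},b_{0}$ by functionals that depend on finitely many coordinates.
\begin{itemize}
\item For the drift integral: truncate $b_{0}$ to $b_{0}^{M}=b_{0}I_{|b_{0}|\le M}$. Then approximate $b_{0}^{M}$ in $L_{1}$ on $[0,T]\times B_{R}$, with respect to a suitable finite measure, by continuous bounded functions $g_{k}$, and approximate $\int_{0}^{t}g_{k}(s,x_{s})\,ds$ by Riemann sums.
\item For the stochastic integral: approximate $\sigma_{0}$ by continuous $h_{k}$, and $\int h_{k}(s,x_{s})\,dw_{s}$ by the sums $\sum h_{k}(t_{i},x_{t_{i}})(w_{t_{i+1}}-w_{t_{i}})$.
\end{itemize}
Each approximant is a measurable function of finitely many values of $(x,w)$, so it has the same law on both spaces. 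The real difficulty, which I expect to be the main obstacle, is that the approximation of a merely measurable $b_{0},\sigma_{0}$ by continuous ones must converge along the trajectories on both spaces. On the primed space this needs a bound such as $E\int_{0}^{T}|g(s,x'_{s})|\,ds\le N\|g\|$ for some norm, and such a bound is not assumed here. To get around this I will pass the approximation through the occupation measure itself. Define
$$
\mu(\Gamma)=E\int_{0}^{T}I_{\Gamma}(s,x'_{s})\,ds .
$$
This measure is identical for $x''$, because it is determined by one-dimensional distributions via Fubini. I then approximate $b_{0}^{M}$ and $\sigma_{0}$ in $L_{1}(\mu)$ and $L_{2}(\mu)$ respectively by continuous functions. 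This gives
$$
\int_{0}^{t}g_{k}(s,x_{s})\,ds\to\int_{0}^{t}b_{0}^{M}(s,x_{s})\,ds
$$
in $L_{1}$ on both spaces. By the It\^o isometry, the stochastic integrals of $h_{k}$ converge in $L_{2}$ on both spaces, using that $w''$ is an $\cF''_{t}$-Wiener process and $x''$ is adapted.

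To conclude, I take the joint law of the finitely many quantities
$$
\Big(x_{t},\ \int_{0}^{t}h_{k}\,dw,\ \int_{0}^{t}g_{k}\,ds,\ \int_{0}^{t}|b_{0}|\wedge M\,ds\Big)
$$
and let the approximation parameters go to their limits. This shows that $\big(x''_{t},\int\sigma_{0}\,dw'',\int b_{0}^{M}\,ds,\int|b_{0}|\wedge M\,ds\big)$ has the same law as its primed counterpart. Letting $M\to\infty$ by monotone convergence gives $\int_{0}^{t}|b_{0}(s,x''_{s})|\,ds<\infty$ a.s., because the corresponding law is that of a finite variable. With this, $\zeta''_{t}\stackrel{d}{=}\zeta'_{t}=0$. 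Finally, \eqref{5.13.2} holds for each rational $t$ a.s., and both sides are continuous in $t$, so it holds for all $t\ge0$ a.s.
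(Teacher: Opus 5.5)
Your argument is correct. The paper does not prove this lemma: it imports it as Lemma 3.1.4 of \cite{Kr_25}, so there is no in-text proof to compare with. What you give is a sound, self-contained proof of the standard type. After passing to continuous modifications, the occupation measure $\mu(\Gamma)=E\int_0^T I_\Gamma(s,x_s)\,ds$ depends only on one-dimensional laws, so it is the same on both spaces. Approximating $b_0 I_{|b_0|\le M}$, $|b_0|\wedge M$ and $\sigma_0$ by continuous functions in $L_1(\mu)$ and $L_2(\mu)$ therefore controls the error simultaneously on both spaces. Riemann sums then reduce everything to Borel functionals of finitely many coordinates, whose laws coincide.

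The one step you leave implicit is that the continuous extension $\hat x''$ of $x''$ from dyadic times really is a modification. For dyadic $r\to t$ one has $x'_r\to x'_t$ in probability. This is a property of the two-dimensional laws, so also $x''_r\to x''_t$ in probability, and hence $\hat x''_t=x''_t$ a.s. Completeness of $\cF''_t$ then keeps $\hat x''$ and $\hat w''$ adapted. It also makes Fubini applicable to $\mu$ for $\hat x''$ and gives progressive measurability of $\sigma_0(s,\hat x''_s)$, which the It\^o isometry step needs.

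An equivalent alternative is to note that the two continuous pairs induce the same law on path space and to build the stochastic integral on the canonical space. Your occupation-measure route avoids having to construct that integral there and to identify it with the original ones.
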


To move forward we need one more
assumption.

\begin{assumption}
                      \label{assumption 8.16.1}

We are given $(t^{(n)}_{0},x^{(n)}_{0})\in\bR^{d+1} $, $n= 1,2,...$,   such that 
$$
(t^{(n)}_{(0)},x^{(n)}_{(0)})
\to (t^{(0)}_{(0)},x^{(0)}_{(0)}):=(t_{(0)},x_{(0)})
$$  as $n\to\infty$. 
We are also given   $b^{(n)}(t,x)$, $n= 1,2,...$,   $\bR^{d}$-valued 
Borel functions
on $\bR^{d+1} $, satisfying 
Assumption \ref{assumption 7.28.1}
and such that, for each $n$, there exists a   complete probability space $(\Omega^{n},\cF^{n}, P^{n})$,  
an increasing filtration of  complete $\sigma$-fields $\cF^{n}_{t}\subset \cF^{n}$, $t\geq0$,
a process $w^{(n)}_{t}$, $t\geq0$, which is a $d$-dimensional Wiener process
relative to $\{\cF^{n}_{t}\}$, and an $\cF^{n}_{t}$-adapted continuous
process $x^{(n)}_{t}$ such that $P^{n}$-(a.s.) for all   $t\geq0$
\begin{equation}
                                                 \label{11.29.1}
x^{(n)}_{t}=x^{(n)}_{(0)} +\int_{0}^{t}\sigma (t^{n}_{(0)}+s,x^{(n)}_{s}) \,dw^{(n)}_{s}
+\int_{0}^{t}b^{(n)}(t^{n}_{(0)}+s,x^{(n)}_{s}) \,ds.
\end{equation} 

\end{assumption}

The statement of the following result coincides
with Theorem 1.6.1 of \cite{Kr_26} apart from
the last assertion in the latter that
the distributions of $x^{(n)}_{\cdot}$ are tight provided $b\in L_{(p,q)}$ and $p\geq q$.

\begin{theorem}
              \label{theorem 9.6.4} 
(i) There exists 
a probability space $(\Omega ,\cF ,P )$,
a filtration of $\sigma$-fields $\cF _{t}\subset \cF $, $t\geq0$,
a process $w _{t}$, $t\geq0$, which is a $d$-dimensional Wiener process
relative to $\{\cF _{t}\}$, and an $\cF _{t}$-adapted
process $x_{t}$ such that 
 (a.s.) for all   $t\geq0$ equation \eqref{4.27.10} holds;

(ii) under Assumption \ref{assumption 8.16.1} additionally suppose that  $b^{(n)}\to b^{(0)}:=b $ 
as $n\to\infty$ in  $L_{(p,q)}(C)$ for any $C\in\bC$.
Then  a subsequence of  finite dimensional distributions of 
$ x^{(n)}_{\cdot} $ converges weakly to 
the corresponding distributions of one of the solutions of   
\eqref{4.27.10}  described in (i).  
\end{theorem}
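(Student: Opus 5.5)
The plan is to reduce both assertions to the case $b\in L_{(p,q)}$ of Theorem 1.6.1 of \cite{Kr_26} by truncation, and then to pass to the limit with Skorokhod's method, using the estimates of Section \ref{section 8.2.1} for uniformity. Two earlier results do the bookkeeping. Remark \ref{remark 7.28.1} gives $\hat b_{\rho}\le N(d,\rho)\hat b$ for every $\rho$, uniformly in $n$. Theorem \ref{theorem 7.31.1} and Lemma \ref{lemma 7.27.1} control integrals of $f(\sft_s,x_s)$ whenever $\sup_{C\in\bC_1}\|f\|_{L_{(\hat p,\hat q)}(C)}<\infty$.

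\emph{Part (i).} Set $b_n=bI_{|t|+|x|<n}$. Then $b_n\in L_{(p,q)}$, and since $|b_n|\le|b|$, $b_n$ still satisfies Assumption \ref{assumption 7.28.1}. By Theorem \ref{theorem 6.19.10} (or Theorem 1.6.1 of \cite{Kr_26}) there are solutions $x^{(n)}$ of \eqref{11.29.1} with $b^{(n)}=b_n$ and a fixed starting point. This puts us in the setting of part (ii), where $b_n\to b$ in $L_{(p,q)}(C)$ for every $C\in\bC$ by dominated convergence. So (i) follows from (ii).

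\emph{Part (ii), tightness.} First I would check the hypotheses of Lemma \ref{lemma 4.23.1} for $\xi^{(n)}_t=(x^{(n)}_t,w^{(n)}_t)$.
\begin{itemize}
\item For \eqref{5.10.5}, Corollary \ref{corollary 8.30.1} with $\tau=0$ gives $P^n(\sup_{s\le T}|x^{(n)}_s-x^{(n)}_{(0)}|\ge\rho)\le e^{\lambda_0T}\nu_0^{\lfloor\rho\rfloor}$, uniformly in $n$.
\item For \eqref{5.10.6}, take $\tau=t_1$ and use Lemma \ref{lemma 7.27.2} with its conditional form and a small ball. The drift integral over $[t_1,t_2]$ before exiting $B_\rho(x_{t_1})$ is small: split it into its first portion and use the exponential weight, or simply use Theorem \ref{theorem 7.29.1} with $f=|b^{(n)}|I_{[t_1,t_1+h]}$. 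The martingale part is $O(h^{1/2})$. Together with the exit-time bound this gives the required uniform modulus in probability.
\end{itemize}

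Lemma \ref{lemma 4.23.1} then produces $\tilde x^{(n')}\to\tilde x$ and $\tilde w^{(n')}\to\tilde w$ in probability on one space. Using Lemma \ref{lemma 5.12.1}, the $\tilde x^{(n')}$ still solve their equations with respect to $\tilde w^{(n')}$. Passing to the limit in the stochastic integral is handled by Lemma \ref{lemma 4.23.2}, after approximating $\sigma$ by continuous $\sigma_\varepsilon$ in $L_{(\hat p,\hat q)}$ on cylinders. The error is then controlled uniformly in $n$ by Theorem \ref{theorem 7.29.1}, localized to a big ball using the exit-time bound above.

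\emph{Main obstacle: the drift term.} The key step is showing that $\int_0^t b^{(n)}(\sft^n_s,\tilde x^{(n)}_s)\,ds\to\int_0^t b(\sft_s,\tilde x_s)\,ds$. I would write $b^{(n)}-b=(b^{(n)}-b)+(b-b_\varepsilon)+b_\varepsilon$, with $b_\varepsilon$ continuous and compactly supported, close to $b$ in $L_{(p,q)}(C)$ for the relevant cylinders. The first two differences are estimated by Theorem \ref{theorem 7.29.1} with $\hat p=p$, $\hat q=q$ after stopping at the exit from a large ball. This needs $d/p+1/q\le1$ for \eqref{7.29.2}; if only $1/p+1/q\le1$ holds, I would instead quote the corresponding estimate, Theorem 1.1.6 of \cite{Kr_26}, which is valid in the full range of $(p,q)$. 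Convergence of the $b_\varepsilon$ term is immediate from continuity. Applying the same estimate to the limit process, via Fatou for the drift, also gives $\int_0^T|b(\sft_s,\tilde x_s)|\,ds<\infty$.

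Finally, to identify $\tilde w$ as an $\tilde\cF_t$-Wiener process, I would note that independence of increments from the past passes to the limit. This finishes (ii), and hence (i).
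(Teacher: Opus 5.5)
\textbf{Overall assessment.} Your overall route is the paper's. You reduce (i) to (ii) by truncating $b$; your $bI_{|t|+|x|<n}$ works, since $|b_n|\le|b|$ preserves Assumption~\ref{assumption 7.28.1}. You get \eqref{5.10.5}--\eqref{5.10.6} from the estimates of Section~\ref{section 8.2.1}; your short-interval argument for \eqref{5.10.6} is a workable substitute for the paper's splitting $b^{(n)}=b^{(n)}I_{|b^{(n)}|\le M}+b^{(n)}I_{|b^{(n)}|>M}$. Then you pass to the limit with Lemma~\ref{lemma 4.23.1}, Lemma~\ref{lemma 5.12.1} and Lemma~\ref{lemma 4.23.2}, using continuous approximations of $\sigma$ and $b$.

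\textbf{The gap: the limit object.} Lemma~\ref{lemma 4.23.1} only gives, for each fixed $t$, a random variable $\tilde x_t$ with $\tilde x^{(n')}_t\to\tilde x_t$ in probability. Nothing makes $(\omega,t)\mapsto\tilde x_t$ jointly measurable, let alone continuous, and $\tilde w$ is not yet continuous either. Consequently:
\begin{itemize}
\item $\int_0^t\sigma(\sft_s,\tilde x_s)\,d\tilde w_s$ and $\int_0^t b(\sft_s,\tilde x_s)\,ds$ are undefined as written;
\item Lemma~\ref{lemma 4.23.2} cannot be applied to $\sigma_\varepsilon(\sft_s,\tilde x_s)$, because it requires progressively measurable integrands;
\item exit times of the limit from balls are meaningless;
\item even after passing to the limit at fixed $t$, you still need one continuous process satisfying \eqref{4.27.10} for all $t$ simultaneously.
\end{itemize}
The paper's Step 2 supplies exactly this. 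It takes a continuous modification of $\tilde w$ (Kolmogorov's criterion). It observes that $\phi(\hat x^{(n)}_t)$, with $\phi(x)=x/(1+|x|)$, is Cauchy in $L_1(\Omega\times[0,T])$. The limit gives a jointly measurable adapted $\hat x^{(0)}$ with $\hat x^{(0)}_t=\tilde x_t$ a.s.\ for $t$ in a set $\cS$ of full measure. Remark~\ref{remark 5.11.1} then lets Lemma~\ref{lemma 4.23.2} be run with $t\in\cS$. This yields \eqref{5.12.1} with $n=0$ for $t\in\cS$, and the continuous solution is defined as the right-hand side.

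\textbf{Consequence for your error estimates at $n=0$.} You need an estimate along the limit process for more than $\int_0^T|b(\sft_s,\hat x^{(0)}_s)|\,ds<\infty$. It must also make the $n=0$ error terms $\int(\sigma-\sigma_\varepsilon)(\sft_s,\hat x^{(0)}_s)\,d\hat w^{(0)}_s$ and $\int(b-b_\varepsilon)(\sft_s,\hat x^{(0)}_s)\,ds$ small. That cannot come from Theorem~\ref{theorem 7.29.1} localized by exit times, since that result concerns solutions and stopping times. What survives convergence in probability at fixed times is a bound on $E\int_0^T f(t,\hat x_t)\,dt$. The paper uses the global weighted estimate \eqref{4.26.1} from Lemma~\ref{lemma 7.27.1}. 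It proves this for $n=0$ first for smooth $f$, then for all Borel $f\ge0$ by a measure-theoretic argument. The exponential weight $\bar\Phi$ also controls the unbounded drift far out, with no pathwise localization.

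\textbf{Minor point on exponents.} Your fallback to an estimate ``valid in the full range of $(p,q)$'' is unnecessary and unsupported. The paper itself applies Lemma~\ref{lemma 7.27.1} with $(\hat p,\hat q)=(p,q)$.
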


Proof. First we prove assertion (ii).

{\em Step 1\/}.  
For $M>0$ define 
$$
\xi^{(n)}_{t}=\int_{0}^{t}b^{(n)}(t^{(n)}_{0}+s,x^{(n)}_{s})\,ds,
$$
$$
\xi^{(n)M}_{t}=\int_{0}^{t}b^{(n)}(t^{(n)}_{0}+s,x^{(n)}_{s})
I_{|b^{(n)}( t^{(n)}_{0}+s, x^{(n)}_{s})|\leq M}\,ds.
$$

Since the derivative of $\xi^{(n)M}_{t}$ 
with respect to $t$ is bounded,
both conditions \eqref{5.10.5} and \eqref{5.10.6}
are satisfied for $\xi^{(n)M}_{t}$. Furthermore,
for any $T,R\in(0,\infty)$ by Theorem
\ref{theorem 7.31.1}
$$
E^{n}I_{\sup_{s\leq T}|x^{(n)}_{s}|<R}\int_{0}^{T}|b^{(n)}(t^{(n)}_{0}+s,x^{(n)}_{s})|
I_{|b^{(n)}(t^{(n)}+s,x^{(n)}_{s})|\geq M}\,ds
$$
$$
\leq E^{n} \int_{0}^{T}|b^{(n)}(t^{(n)}_{0}+s,x^{(n)}_{s})|
I_{|b^{(n)}(t^{(n)}_{0}+s,x^{(n)}_{s})|\geq M,|x^{(n)}_{s}|<R}\,ds 
$$
\begin{equation}
                                  \label{8.14.3}
\leq  
N\|b^{(n)}I_{|b^{(n)}|\geq M}\|_{L_{(q,p)}
(C_{2S,R}(-S,0))},
\end{equation}
where $N$ is independent of $n$ and $S<\infty$
is such that $-S\leq t^{(n)}_{0}+T\leq S$ for all $n$. Since $b^{n}\to b$
in the $\|\cdot\|_{L_{(q,p)}
(C_{2S,R}(-S,0))}$-norm, the latter quantity can be made
as small as we like on account of choosing $M$
 large enough.  Hence for any  
  $c,M,R\in[2,\infty)$
$$
 P
(|\xi_{t}^{(n)}|\geq c)\leq
 P
\big(|\xi_{t}^{(n)M}|\geq c-1\big)+\varepsilon(M,R)
+P(
\sup_{s\leq T}|x^{(n)}_{s}|\geq R),
$$
where $\varepsilon(M,R)\to0$ as $M\to\infty$
for each $R$. Taking into account that
\begin{equation}
                                \label{8.16.6}
P(\sup_{s\leq T}|x^{(n)}_{s}|\geq R)\leq e^{\lambda_{0} T}\nu^{R-1}_{0}
\end{equation}
  and recalling what is said
about $\xi_{t}^{(n)M}$ we see without much
difficulty that condition \eqref{5.10.5}
is satisfied. Similar arguments show  that
\eqref{5.10.6} is satisfied as well.  

The fact that \eqref{5.10.5} and \eqref{5.10.6}
are satisfied for
$$
\eta^{(n)}_{t}=x^{(n)}_{0}+\int_{0}^{t}\sigma (t^{(n)}_{0}+s,x^{(n)}_{s}) \,dw^{(n)}_{s}
$$
us almost trivial.

{\em Step 2\/}. By Lemma \ref{lemma 4.23.1}  there is a subsequence, which by common abuse of notation
we identify with the original one, a probability space, and
random $\bR^{2d}$-valued
processes $(\tilde x^{(n)}_{t},\tilde w^{(n)}_{t})$,
$(\tilde x^{(0)}_{t},\tilde w^{(0)}_{t})$  
defined on this probability space such that all finite-dimensional
distributions of $(\tilde x^{(n)}_{t},\tilde w^{(n)}_{t})$ coincide with 
the corresponding finite-dimensional
distributions of $(x_{t}^{(n)},w^{(n)}_{t})$ and 
\begin{equation}
                                                       \label{5.13.6}
P (|(\tilde x^{(n)}_{t},\tilde w^{(n)}_{t})
-(\tilde x^{(0)}_{t},\tilde w^{(0)}_{t})|\geq \varepsilon) \to 0   
\end{equation}
as $n \to\infty$ for any $\varepsilon>0$ and $t\geq0$.
 
Furthermore (as a result of   
\eqref{5.10.5}), for any $T\in(0,\infty)$ as $R\to
\infty$
\begin{equation}
                           \label{8.14.1}
P (|\tilde x^{(n)}_{t}|>R)\to0
\end{equation}
uniformly with respect to $t\leq T$ and $n\geq 1$ and, 
as \eqref{5.13.6}   implies, with respect to
$n\geq 0$.

For $n\geq0$ introduce $\tilde \cF^{n}_{t}$ as the completion
of $\sigma(\tilde x^{(n)}_{s}, \tilde w^{(n)}_{s},s\leq t)$. It is easy to see,
using Kolmogorov's continuity criterion, that
$\tilde w^{(0)}_{t}$ admits a continuous modification
$\hat w^{(0)}_{t}$ such that
$\{\hat w^{(0)}_{t},\tilde \cF^{0}_{t}\}$  is a Wiener
process.

By Lemma \ref{lemma 5.12.1}, for each $n\geq1$, the process
$(\tilde x^{(n)}_{t},\tilde w_{t}^{(n)})$
admits a continuous
modification denoted by   $(\hat x^{(n)}_{t},\hat w_{t}^{(n)})$
such  that 
 $(\hat w_{t}^{(n)},
\tilde \cF_{t}^{n})$ is a Wiener process and  (a.s) for all $t\geq0$
\begin{equation}
                                                     \label{5.12.1}
\hat x^{(n)}_{t}=x^{(n)}_{0}+\int_{0}^{t}\sigma (t^{(n)}_{0}+s,
\hat x^{(n)}_{s})\,d\hat w^{(n)}_{s}+\int_{0}^{t}b^{(n)}(t^{(n)}_{0}+s,
\hat x^{(n)}_{s})\,ds.
\end{equation}

In light of \eqref{5.13.6}   we have
\begin{equation}
                                                       \label{5.17.1}
P (|(\hat x^{(n)}_{t},\hat w^{(n)}_{t})
-(\tilde x^{(0)}_{t},\tilde w^{(0)}_{t})|\geq \varepsilon) \to 0   
\end{equation}
as $n \to\infty$ for each $\varepsilon>0$ and $t\geq0$.

Now the fact that $\tilde x^{(0)}_{t}$ may be not measurable
in $t$ causes some problems. However, 
set $\phi(x)=x/(1+|x|)$  and observe that, owing to \eqref{5.17.1},
 $\phi(\hat x^{(n)}_{t})$ form a Cauchy sequence
in $L_{1}(\Omega\times[0,T])$ and, hence, converges in that space
to $\phi(\hat x^{(0)}_{t})$, where $\hat x^{(0)}_{t}$ is measurable
with respect to $(\omega,t)$. By Fubini's theorem
there is a set $\cS\subset [0,\infty)$ of full measure
such that, for any $t\in\cS$, $\hat x^{(0)}_{t}=\tilde x^{(0)}_{t}$
(a.s.).   We set
$\hat x^{(0)}_{t}=0$ for $t\not\in \cS$ and 
observe that $\hat x^{(0)}_{t}$
is $\tilde \cF^{0}_{t}$-adapted.

Also note that   \eqref{5.17.1}  remains 
valid if we replace $(\tilde x^{(0)}_{t},\tilde w^{(0)}_{t})$
with $(\hat x^{(0)}_{t},\hat w^{(0)}_{t})$ and 
restrict the range  of $t $ to $t \in\cS$.
This is done to accommodate Remark \ref{remark 5.11.1}. Observe in addition that \eqref{8.14.1} implies that ($P (|\hat x^{(0)}_{t}|>R)$ is a measurable function)
\begin{equation}
                         \label{8.16.7}
\int_{0}^{T}P (|\hat x^{(0)}_{t}|>R)\,dt\to0
\end{equation}
as $R\to\infty$ for any $T>0$.

{\em Step 3\/}.
 Now by Lemma
\ref{lemma 4.23.2} for any  $T\geq0$ and 
bounded continuous
$d\times d$ symmetric matrix-valued
 $\alpha(t,x)$ we have 
\begin{equation}
                                                     \label{5.13.7}
 \int_{0}^{T}\alpha(t^{(n)}_{0}+s,\hat x_{s}^{(n)})\,d\hat w^{(n)}_{s}
\to \int_{0}^{T}\alpha(t^{(0)}_{0}+s,\hat x_{s}^{(0)})\,d\hat w^{(0)}_{s}
\end{equation}
as $n\to\infty$ in probability.
We want to use this  to pass to the limit in the stochastic term
in \eqref{5.12.1}.
But first observe that
by Lemma \ref{lemma 7.27.1}  
for any $T\in(0,\infty)$, Borel $f(t,x)\geq0$, and $n\geq1$  
\begin{equation}
                                                  \label{4.26.1}
E\int_{0}^{T}
f(t,\hat x^{n}_{t})\,dt\leq N\|\bar\Phi fI_{(0,T)}\|_{L_{(p,q)}},
\end{equation} 
where $N$ is independent of $f$ and $n$,
$$
\bar\Phi(x)=\sup_{n}\Phi^{1/4}(x-x^{(n)}_{0}).
$$
 The convergence
in probability implies that \eqref{4.26.1} holds for
$n=0$ as well with the same constant $N$, first for 
nonnegative $f\in C^{\infty}_{0}
(\bR^{d+1})$ and then, due to general measure-theoretic arguments,
for any Borel nonnegative $f$.

Then take an $\alpha$ as above with values in $\bS_{\delta}$ and write
$$
P\Big(\Big|\int_{0}^{T}\sigma(t^{(n)}_{0}+s,\hat x_{s}^{(n)})\,d\hat w^{(n)}_{s}
- \int_{0}^{T}\sigma(t^{(0)}_{0}+s,\hat x_{s}^{(0)})\,d\hat w^{(0)}_{s}\Big|\geq 3\varepsilon
\Big)\leq I_{n}+I_{0}+J_{n},
$$
where
$$
I_{n}=P\Big(\Big|\int_{0}^{T}(\sigma-\alpha)(t^{(n)}_{0}+s,\hat x_{s}^{(n)})\,d\hat w^{(n)}_{s}
  \Big|\geq \varepsilon
\Big)
$$
$$
J_{n} =P\Big(\Big|\int_{0}^{T}\alpha(t^{(n)}_{0}+s,\hat x_{s}^{(n)})\,d\hat w^{(n)}_{s}
- \int_{0}^{T}\alpha(t^{(0)}_{0}+s,\hat x_{s}^{(0)})\,d\hat w^{(0)}_{s}\Big|\geq \varepsilon
\Big).
$$
In light of \eqref{5.13.7}, $J_{n}\to0$
as $n\to\infty$. Estimate \eqref{4.26.1}
implies that
$$
I_{n}\leq N\varepsilon^{-1}
\|\bar\Phi (\sigma-\alpha)I_{(-S,S)}\|_{L_{(p,q)}},
$$
which can be made as small as we wish by choosing  appropriate $\alpha$. It follows that
\begin{equation}
                               \label{8.3.1}
\int_{0}^{t}\sigma (t^{(n)}_{0}+s,
\hat x^{(n)}_{s})\,d\hat w^{(n)}_{s}
\to \int_{0}^{t}\sigma (t^{(0)}_{0}+s,
\hat x^{(0)}_{s})\,d\hat w^{(0)}_{s}
\end{equation}
in probability as $n\to\infty$.
 
To deal with the second integral in \eqref{5.12.1},
take a bounded continuous $\bR^{d}$-valued
$\beta$ on $\bR^{d+1}$ and observe that,
thanks to \eqref{4.26.1},   
$$
E\Big|\int_{0}^{T}\big(b^{(n)}(t^{(n)}_{0}+s,
\hat x^{(n)}_{s}) -\beta(t^{(n)}_{0}+s,
\hat x^{(n)}_{s})\big)\,ds\Big|
$$
$$
\leq N\|\bar\Phi (b^{(n)}-\beta)I_{(-S,S)}\|_{L_{(p,q)}}\leq N\|\bar\Phi (b^{(n)}-b^{(0)})I_{(-S,S)}\|_{L_{(p,q)}}
$$
$$
+N\|\bar\Phi (b^{(0)}-\beta)I_{(-S,S)}\|_{L_{(p,q)}}.
$$
Here the first term in the second inequality
tends to zero as $n\to\infty$ by assumption
and the second term can be made as small as we like on account of choosing appropriate $\beta$.
In addition,
$$
\int_{0}^{T}E|\beta(t^{(n)}_{0}+s,
\hat x^{(n)}_{s}) -\beta(t^{(0)}_{0}+s,
\hat x^{(0)}_{s})\big|\,ds\to0
$$
as $n\to\infty$ because $\beta$ is bounded
and continuous and $\hat x^{(n)}_{s}\to
\hat x^{(0)}_{s}$ in probability.
  
Hence,   in probability
$$
\int_{0}^{t}b^{(n)}(t_{n}+s,
\hat x^{(n)}_{s})\,ds\to \int_{0}^{t}b^{(0)}(t_{0}+s,
\hat x^{(0)}_{s})\,ds.
$$

This and \eqref{8.3.1} allow us to pass to the limit
in \eqref{5.12.1} when $t\in \cS$ (and 
$\hat x^{(n)}_{t}\to \hat x^{(0)}_{t}$ in probability) and shows that
\eqref{5.12.1} holds true for $n=0$ and
$t\in \cS$. In turn this implies that
$\hat x^{(n)}_{t}$ is extendible from the 
set of full measure $\cS$ to all $t$ (as the right-hand side of \eqref{5.12.1}) as a continuous
function satisfying \eqref{5.12.1} with $n=0$
for all $t$ at once (a.s.).
This proves assertion (ii) of the theorem.

Assertion (i) follows from assertion (ii)
after we take $t^{(n)}_{0}=t_{0},x^{(n)}_{0}=x_{0}$ and $b^{(n)}(t,x)=\zeta(nt,nx)b(t,x)$,  
where $\zeta\in C^{\infty}_{0}(\bR^{d+1})$
is such that $\zeta(0)=1$. Indeed, in this
case $b^{(n)}\in L_{(p,q)}$ and by Theorem
\ref{theorem 6.19.10} and Remark \ref{remark 8.5.1}
the assumptions stated before Theorem
\ref{theorem 9.6.4} are satisfied.
This proves the theorem. \qed

\mysection{Proof of Theorem \ref{theorem 6.19.1}}
                    \label{section 9.12.1}

In case $R=1$ in \eqref{7.18.1} our theorem
is proved by repeating word for word the
corresponding (rather long) part of the
proof of Theorem 1.9.1 of \cite{Kr_26} 
(originated in \cite{Kr_73_1}) using
Theorem \ref{theorem 9.6.4} in place of Theorem 1.6.1 of \cite{Kr_26}.
 
For $R\ne1$ we use the self-similar transformation. Namely, set
$$
a^{R}(t,x)=a(R^{2}t, Rx),\quad b^{R}(t,x)=R b( R^{2}t, Rx).
$$
Obviously,
$$
\hat b^{R}_{ 1}=R 
\hat b_{ R}\leq \hat b.
$$
Hence, there exists strong Markov diffusion
process $X^{R}=\big((\sft_{t},x_{t}),\cN_{t},
P^{R}_{t,x}\big)$ corresponding to $a^{R},b^{R}$ as described in the Introduction.

In particular, as pointed out in Remark
\ref{remark 8.5.1}, for any $(t,x)\in\bR^{d+1}$
the system
$$
x_{s}=x+\int_{0}^{s}\sigma(R^{2}\sft_{r},Rx
_{r})\,dw_{r}+\int_{0}^{s}Rb(R^{2}\sft_{r},Rx
_{r})\,dr,\quad \sft_{s}=t+s
$$
has a solution whose distribution is given by
$P^{R}_{t,x}$. Then the couple
$$
x^{R}_{t}:=Rx_{R^{-2}t},\quad\sft^{R}_{t}=
R^{2}\sft_{R^{-2}t}
$$
satisfies
$$
x^{R}_{s}=Rx+\int_{0}^{s}\sigma(\sft^{R}_{r},
x^{R}_{r})\,dw^{R}_{r}
+\int_{0}^{s}b(\sft^{R}_{r},
x^{R}_{r})\,dr,\quad \sft^{R}_{s}=R^{2}t+s,
$$
where $w^{R}_{t}=Rw_{R^{-2}t}$ is a Wiener process.
 
Now define new measures $P_{t,x}$ on $(\Omega,\cN_{\infty})$ by the formula
$$
P_{R^{2}t,Rx}(A)=P^{R}_{t,x}\big(({\sft^{R}_{\cdot}},x^{R}_{\cdot})\in A\big).
$$
We leave to the reader the routine checking that so described changes of the scales in $\bR^{d+1}$
yields a strong Markov diffusion process
$\big((\sft_{t},x_{t}),\cN_{t},
P_{t,x}\big)$ corresponding to the original
$a,b$. \qed

\begin{remark}
                      \label{remark 8.13.03}
For the process from Theorem \ref{theorem 6.19.1},
$$\lambda_{0}=  
(\delta^{-1}d)\vee \ln(1/6) ,\quad\nu_{0}=11/12
$$
  we have
\begin{equation}
                          \label{7.6.01}
 E_{0,0}e^{-\lambda_{0}R^{-2}\tau_{R}}\leq\nu_{0},
\end{equation}
where $\tau_{R}$ is the first exit time of
$ x _{ s}  $ from $B_{R} $.
Indeed, the $P_{0,0}$ distribution 
of $R^{-2}\tau_{R}$ is given by the
$P^{R}_{0,0}$ distribution 
of $ \tau_{1}$, so that the result follows from
Lemma \ref{lemma 7.18.1}.

\end{remark}

\mysection{Some properties of solutions
of \eqref{4.27.10} under the condition
of Theorem \ref{theorem 6.19.01}}
                     \label{section 8.16.1}

In the setting of Section \ref{section 8.2.1}
  we suppose that \eqref{8.13.1} {}
holds with $\hat b$ from Lemma \ref{lemma 7.18.1}.  Then
define recursively
$$
R_{0}=0,\quad\rho_{0}=\rho(R_{0}),\quad ,
\quad R_{1}=  \rho_{0},\quad\rho_{1}=
\rho(R_{1}),\quad R_{2}=\rho_{0}+\rho_{1},
$$
$$
 R_{n}=\rho_{0}+...+
\rho_{n-1},\quad \rho_{n}=\rho(R_{n}),
$$
so that $R_{n+1}-R_{n}=\rho(R_{n})$.
Obviously, $R_{n}$ is increasing and $R_{n_{0}}
>e$ for some $n_{0}$. 

\begin{lemma}
                         \label{lemma 8.16.1}
  We have
\begin{equation}
                             \label{8.15.1}
\bar R:=\lim_{n\to\infty}R_{n}=\infty,
\end{equation}
and for $m=0,1,...$, $n_{1}\geq n_{0}$
\begin{equation}
                             \label{8.15.2}
 \mu(n_{1},m):=\rho^{2}_{n_{1}}+...+
\rho^{2}_{n_{1}+m}\geq K_{0}\ln\ln R_{n_{1}+m+1} -K_{0}\ln\ln R_{n_{1}}.
\end{equation}

\end{lemma}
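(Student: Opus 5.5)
Here $\rho$ stands for $\hat\rho$, and the only input is the lower bound \eqref{8.15.6}, $\rho(r)\ge\hat\rho_0(r)$. The plan is to compare the recursion $R_{n+1}-R_n=\rho(R_n)$ with the ODE $r'=\hat\rho_0(r)$, whose solution grows like $\exp\exp(t/K_0)$, and to use the integral $\int dr/\hat\rho_0(r)=K_0^{-1}\ln\ln r$ to handle both assertions.

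\emph{Proof of \eqref{8.15.1}.} I argue by contradiction. Suppose $\bar R<\infty$. Since $R_n$ is increasing and $\hat\rho_0$ is positive on $[0,\bar R]$ and bounded below there by $c:=\min\big(K_0,\,K_0(\bar R\ln\bar R)^{-1}\big)>0$, we get $\rho_n=\rho(R_n)\ge\hat\rho_0(R_n)\ge c$ for every $n$. (If $\bar R\le e$ one simply has $c=K_0$.) Then $R_n\ge nc\to\infty$, which is a contradiction. The same estimate also justifies the existence of $n_0$ with $R_{n_0}>e$.

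\emph{Proof of \eqref{8.15.2}.} For $n\ge n_0$ we have $R_n>e$, so $\rho_n\ge K_0/(R_n\ln R_n)$. I would prove the per-step inequality
$$
\rho_n^2\ge K_0\ln\ln R_{n+1}-K_0\ln\ln R_n
$$
and then sum it over $n=n_1,\dots,n_1+m$; the sum telescopes to exactly \eqref{8.15.2}. To obtain the per-step bound, write
$$
\ln\ln R_{n+1}-\ln\ln R_n=\int_{R_n}^{R_{n+1}}\frac{dr}{r\ln r}\le\frac{R_{n+1}-R_n}{R_n\ln R_n}=\frac{\rho_n}{R_n\ln R_n},
$$
where the inequality holds because $r\ln r$ is increasing for $r>e$. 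Multiplying by $K_0$ gives
$$
K_0(\ln\ln R_{n+1}-\ln\ln R_n)\le\rho_n\cdot\frac{K_0}{R_n\ln R_n}=\rho_n\hat\rho_0(R_n)\le\rho_n\cdot\rho_n=\rho_n^2,
$$
which is the desired inequality.

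The main point is to recognize that one factor of $\rho_n$ supplies the step length and the other factor is bounded below by $\hat\rho_0(R_n)$, which is precisely the density $K_0/(r\ln r)$ in the integral. With that observation there is no real obstacle. The only care needed is at the boundary: $R_n$ must exceed $e$ so that both the formula for $\hat\rho_0$ and the monotonicity of $r\ln r$ apply, and this is exactly why the statement requires $n_1\ge n_0$.
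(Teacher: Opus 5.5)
Your proof is correct and follows the paper's argument for \eqref{8.15.2}: you split $\rho_n^2=\rho_n\cdot\rho_n$, use $\rho_n=R_{n+1}-R_n$ for one factor and $\rho_n\ge K_0/(R_n\ln R_n)$ for the other. Your bound on $\int_{R_n}^{R_{n+1}}dr/(r\ln r)$ does in one step what the paper does by applying $\ln x\le x-1$ twice. For \eqref{8.15.1} you argue slightly differently, noting that if $\bar R<\infty$ the steps $\rho_n$ are bounded below by a positive constant; this is simpler than the paper's passage to the limit in $R_{n+1}^2\ge R_n^2+2K_0(\ln R_n)^{-1}$, and it also justifies the existence of $n_0$, which the paper takes as obvious.
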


Proof. For $n\geq n_{0}$ we have $\rho(R_{n})\geq K_{0}R_{n}^{-1}(\ln R_{n})^{-1}$,
$$
R_{n+1}^{2}=R_{n}^{2}+2R_{n}\rho(R_{n})
+\rho^{2}(R_{n})\geq R_{n}^{2}+2K_{0} (\ln R_{n})^{-1} ,
$$
$\bar R\geq \bar R+2K_{0}(\ln \bar R)^{-1}$, 
and \eqref{8.15.1} follows.
To prove \eqref{8.15.2}, it suffices to observe that for $n\geq n_{0}$ we have 
$$
\rho_{n}^{2}=\rho_{n}\rho_{n}\geq K_{0}(\ln R_{n})^{-1}\Big(\frac{R_{n+1}}{R_{n}}-1
\Big)\geq K_{0}(\ln R_{n})^{-1}\ln\Big (\frac{R_{n+1}}{R_{n}} 
\Big)
$$
$$
=K_{0}\Big(\frac{\ln R_{n+1}}{\ln R_{n}}-1\Big)
\geq K_{0}\ln \Big(\frac{\ln R_{n+1}}{\ln R_{n}}\Big).
$$
  \qed

The fact that $R_{n}\to\infty$ as $n\to
\infty$ and $\dashnorm b\|_{L_{(p,q)}(C_{  
\hat\rho}(t,x))}<\infty$ leads to the following.
 \begin{corollary}
                     \label{corollary 8.16.1}
It holds that $\hat b_{ \rho}<\infty$ for any $\rho>0$.
\end{corollary}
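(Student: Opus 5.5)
The plan is a covering argument: reduce the averaged norm over an arbitrary cylinder $C_{\rho}(t,x)$ to the averaged norms over cylinders $C_{\hat\rho(|y|)}(s,y)$, which \eqref{8.13.1} controls. I should say at the outset that I can only close the argument for the global supremum $\hat b_{\rho}$ when $\hat\rho$ is bounded below on all of $\bR^{d}$. Step 3 explains why I believe this restriction cannot be removed.

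\emph{Step 1 (subadditivity of averaged mixed norms).} Take $C=C_{\rho}(t,x)$ and partition it, up to a null set, into congruent boxes $Q_{k}$ of spatial side $r$ and time length $r^{2}$, with $r\le\rho$. Each $Q_{k}$ is contained in a cylinder $C_{k}\in\bC_{\sqrt d\,r}$. For $p\ge q$, write the inner $L_{p}$-average over $B_{\rho}(x)$ as an average of the $L_{p}$-averages over the spatial pieces. Then use $\big(\sum_j \theta_j a_j\big)^{q/p}\le N\sum_j\theta_j a_j^{q/p}$ when the weights $\theta_j$ are comparable and the number of pieces is controlled, and average in time. This gives
$$
\dashnorm b\|_{L_{(p,q)}(C)}\le N(d,p,q)\,\max_{k}\dashnorm b\|_{L_{(p,q)}(C_{k})} .
$$
The case $p<q$ is symmetric, with the order of integration reversed. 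This step is routine bookkeeping.

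\emph{Step 2 (choice of scale).} Set $r(x,\rho)=\rho\wedge \inf\{\hat\rho(|y|):|y|\le |x|+\rho\}$. Since $\hat\rho\ge\hat\rho_{0}$ and $\hat\rho_{0}$ is bounded below on bounded sets, $r>0$. Each $C_{k}$, after enlargement by a fixed factor, is covered by $N(d)$ cylinders of the form $C_{\hat\rho(|y|)}(s,y)$ with $|y|\le|x|+\rho$. The mismatch between $\sqrt d\,r$ and $\hat\rho(|y|)$ is handled by the same averaging inequality as in Step 1. Applying \eqref{8.13.1} to each of these cylinders gives
$$
\dashnorm b\|_{L_{(p,q)}(C_{\rho}(t,x))}\le N\hat b\, r(x,\rho)^{-1}\Big(\frac{\rho}{r(x,\rho)}\Big)^{d/p+2/q}<\infty .
$$
This bound is uniform in $t$, so every averaged norm entering $\hat b_{\rho}$ is finite. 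If $\inf_{\bR^{d}}\hat\rho>0$, the right-hand side is bounded uniformly in $x$ as well, and $\hat b_{\rho}<\infty$ follows.

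\emph{Step 3 (main obstacle: uniformity in $x$).} The difficulty is removing the dependence on $|x|$ when $\hat\rho(|y|)\to0$ as $|y|\to\infty$, which \eqref{8.15.6} permits. I do not see how to do this, and I believe the global statement is false in that regime. Consider $b(x)=\varepsilon|x|\ln|x|$ for $|x|>e$, with $\hat\rho=\hat\rho_{0}$. The averaged norm of $b$ over $C_{\hat\rho_0}(t,x)$ is about $\varepsilon|x|\ln|x|$, while the right-hand side of \eqref{8.13.1} is $\hat b\,|x|\ln|x|/K_{0}$. So \eqref{8.13.1} holds once $\varepsilon\le\hat b/K_{0}$. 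Yet $\hat b_{\rho}=\infty$ for every $\rho$, since the averaged norm over $C_{\rho}(t,x)$ grows like $\varepsilon|x|\ln|x|$.

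Accordingly, the write-up will consist of Steps 1--2. It will state the conclusion $\hat b_{\rho}<\infty$ under the extra hypothesis $\inf\hat\rho>0$, and it will present the example above to show that without that hypothesis the uniform-in-$x$ version fails.
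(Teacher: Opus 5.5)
Your Step 3 is correct, and it is the main point. With $\hat b_{\rho}$ defined as a supremum over all of $\bC_{\rho}$, the corollary does not follow from \eqref{8.13.1} once $\hat\rho$ may tend to $0$ at infinity. Your $\varepsilon|x|\ln|x|$ example works. The paper's own Remark~\ref{remark 8.13.3} (linear growth, with $\hat\rho=eK_{0}/(|x|\vee e)$) is already a counterexample.

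The paper's justification is one sentence: $R_{n}\to\infty$, plus finiteness of the averages over the cylinders $C_{\hat\rho}(t,x)$. This is the same covering idea as your Steps 1--2, and it yields only a local statement: for every $\rho$ and $L$, the averaged norms over $C_{\rho}(t,x)$ are bounded uniformly in $t\in\bR$ and $|x|\le L$. That local version is what the later application actually uses. In Lemma~\ref{lemma 8.17.1} the process is only followed until it leaves $B_{R_{n_{1}+m+1}}$.

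\textbf{The gap is in the replacement you plan to write up.} Your Step 2 bound, and the claim that $\inf\hat\rho>0$ gives $\hat b_{\rho}<\infty$, fail when $d/p+2/q>1$ (e.g.\ $p=q=2$, which is admissible). The mismatch between $r$ and $\hat\rho(|y|)$ goes the wrong way. From \eqref{8.13.1} on $C_{\hat\rho(|y|)}(s,y)$, a subcylinder of radius $r\le\hat\rho(|y|)$ only gets the average bound $(\hat\rho(|y|)/r)^{d/p+2/q}\,\hat b\,\hat\rho(|y|)^{-1}=\hat b\,r^{-1}(\hat\rho(|y|)/r)^{d/p+2/q-1}$. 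This bound is sharp (by concentration) and unbounded if $\hat\rho$ is, and \eqref{8.15.6} puts no upper bound on $\hat\rho$.

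Here is a counterexample to the global claim under $\inf\hat\rho>0$:
\begin{itemize}
\item Let $p=q=2$ and $\hat\rho(s)=s+1$, so $\hat\rho\ge 1\ge\hat\rho_{0}$.
\item Let $b=\sum_{j\ge1}A_{j}I_{C_{1}(0,y_{j})}$ with $|y_{j}|=4^{j}$, $A_{j}^{2}=\varepsilon 4^{j\gamma}$ and $0<\gamma<d$.
\item $C_{|x|+1}(t,x)$ meets the $j$th bump only if $4^{j}<2|x|+2$. Hence $\int_{C_{|x|+1}(t,x)}|b|^{2}\,dyds\le N\varepsilon(|x|+1)^{\gamma}\le\hat b^{2}|B_{1}|(|x|+1)^{d}$ for small $\varepsilon$, which is exactly \eqref{8.13.1}.
\item Yet the average of $b$ over $C_{\rho}(0,y_{j})$ is at least $\min(1,\rho^{-(d+2)/2})A_{j}\to\infty$.
\end{itemize}
So a global variant needs $\hat\rho$ bounded above as well as below, or $d/p+2/q\le1$.

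The same oversight means your formula does not give uniformity in $x$ even on bounded sets, because $\hat\rho$ is only assumed finite. The fix is to fix the centers once and for all:
\begin{itemize}
\item Let $c=\inf_{[0,L+\rho]}\hat\rho_{0}>0$, and cover $\bar B_{L+\rho}$ by finitely many balls $B_{c/2}(z_{i})$ with $z_{i}\in\bar B_{L+\rho}$.
\item Every set $[s,s+c^{2}/4)\times B_{c/2}(z_{i})$ lies in $C_{\hat\rho(|z_{i}|)}(s,z_{i})$. The unaveraged norm of $b$ there is at most $N\hat b\,\hat\rho(|z_{i}|)^{d/p+2/q-1}$, and there are only finitely many such constants.
\item Each $C_{\rho}(t,x)$ with $|x|\le L$ is covered by a bounded number of these sets, which gives the uniform local bound.
\end{itemize}
Separately, the constant in Step 1 must depend on the number of pieces when $p\ne q$. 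This is harmless for finiteness; the triangle inequality for the unaveraged norm is all you need.
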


\begin{lemma}
                        \label{lemma 8.13.2}
For  each  stopping time $\tau$, $\rho=\rho(x_{\tau})$
and
 $$
\lambda_{0}=
(\delta^{-1}d)\vee \ln(1/6) ,\quad\nu_{0}=11/12
$$
on the set $\{\tau<\infty \}$ we have 
\begin{equation}
                          \label{7.6.1}
 E_{\cF_{\tau}}e^{-\lambda_{0}\rho^{-2}\tau_{\rho}}\leq\nu_{0},\quad
E_{\cF_{\tau}}e^{-\lambda_{0}\tau_{\rho}}\leq\nu^{\rho^{2}}_{0},
\end{equation}
where $\tau_{\rho}$ is the first exit time of
$ x_{\tau+ s}  $ from $B_{\rho}(x_{\tau} )$.

\end{lemma}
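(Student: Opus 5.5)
The plan is to reduce the first inequality in \eqref{7.6.1} to Lemma \ref{lemma 7.18.1} by a self-similar change of scale around the random point $(\sft_\tau,x_\tau)$, exactly as in the proof of Theorem \ref{theorem 6.19.1}. Then I will derive the second inequality from the first by iteration, in the spirit of Corollary \ref{corollary 8.30.1}.

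\emph{First inequality.} Following Remark \ref{remark 8.1.2}, I freeze the $\cF_\tau$-measurable parameters. Thus $\rho=\rho(x_\tau)$ and $(\sft_\tau,x_\tau)$ may be treated as deterministic, say $\rho$ and $(t,x)$, when computing the conditional expectation. Consider the rescaled process
$$
y_s=\rho^{-1}\big(x_{\tau+\rho^2 s}-x_\tau\big),\qquad \tilde w_s=\rho^{-1}(w_{\tau+\rho^2 s}-w_\tau).
$$
It satisfies an equation of type \eqref{4.27.10} driven by the Wiener process $\tilde w$ relative to $\cF_{\tau+\rho^2 s}$, with coefficients
$$
\tilde a(s,y)=a(t+\rho^2 s,x+\rho y),\qquad \tilde b(s,y)=\rho\, b(t+\rho^2 s,x+\rho y).
$$
Its starting point is $(0,0)$. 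A change of variables gives
$$
\dashnorm \tilde b\|_{L_{(p,q)}(C_1(0,0))}=\rho\,\dashnorm b\|_{L_{(p,q)}(C_\rho(t,x))}\le \hat b
$$
by \eqref{8.13.1}, which is exactly the hypothesis \eqref{7.18.010} at the stopping time $0$. Moreover, the first exit time of $y$ from $B_1$ equals $\rho^{-2}\tau_\rho$. Lemma \ref{lemma 7.18.1} then yields $E_{\cF_\tau}e^{-\lambda_0\rho^{-2}\tau_\rho}\le\nu_0$.

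I must check one point here. Lemma \ref{lemma 7.18.1} was proved via Theorem 1.1.12 of \cite{Kr_26}, and that theorem needs only the local quantity $\hat b(\tau)$. It does not need the global Assumption \ref{assumption 7.28.1}, so the application is legitimate. Corollary \ref{corollary 8.16.1} guarantees $\hat b_\rho<\infty$ for every $\rho$, so the finiteness requirements are in place.

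\emph{Second inequality.} This is the main obstacle. If $\rho\le1$, then $\rho^{-2}\ge1$ and Jensen's inequality does not apply directly. A first attempt is
$$
E e^{-\lambda_0\tau_\rho}=E\big(e^{-\lambda_0\rho^{-2}\tau_\rho}\big)^{\rho^2}\le \big(E e^{-\lambda_0\rho^{-2}\tau_\rho}\big)^{\rho^2}\le\nu_0^{\rho^2}.
$$
This works by Jensen's inequality when $\rho^2\le1$, since $z\mapsto z^{\rho^2}$ is concave.

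If $\rho>1$, I would instead cover the motion by a chain of balls of radius $1$. Since $\rho(|x|)$ is assumed to be the function appearing in \eqref{8.13.1}, I would use the condition at unit scale for points inside $B_\rho(x_\tau)$. Alternatively, I would rescale again and apply Corollary \ref{corollary 8.30.1} to the rescaled process at scale $\rho$ with a time-step of order $\rho^2$.

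Concretely, I would split $\tau_\rho$ into successive exits from balls $B_1$ centered at the current position. Each such exit contributes a factor $\nu_0$ by the strong Markov property of the conditional expectations. At least $\lfloor\rho\rfloor$ exits are needed, which gives $\nu_0^{\lfloor\rho\rfloor}$. The difficulty is to upgrade $\lfloor\rho\rfloor$ to $\rho^2$. The statement is most likely intended for $\rho\le K_0\le1$, where the Jensen argument suffices, and I would restrict to that case, noting that $\hat\rho$ may be replaced by $\hat\rho\wedge 1$.
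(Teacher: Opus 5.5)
Your argument is the paper's. The paper gets the first inequality by rescaling around $(\sft_\tau,x_\tau)$ and applying Lemma \ref{lemma 7.18.1}, as in Remark \ref{remark 8.13.03}. Its ``H\"older's inequality'' for the second one is exactly your step $E_{\cF_\tau}Z^{\rho^{2}}\le (E_{\cF_\tau}Z)^{\rho^{2}}$ with $Z=e^{-\lambda_0\rho^{-2}\tau_\rho}$, and that step needs $\rho\le 1$ in both versions.

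Your concern about $\rho>1$ is justified, and it is not a weakness of your method: the second inequality is false for large $\rho$. Take $d=1$, $a\equiv1$, $b\equiv0$ and $\hat\rho\equiv\rho$, so that \eqref{8.13.1} and \eqref{8.15.6} hold trivially. Then
$E e^{-\lambda_0\tau_\rho}=1/\cosh(\rho\sqrt{2\lambda_0})\ge e^{-\sqrt{2\lambda_0}\,\rho}$,
which exceeds $\nu_0^{\rho^{2}}=e^{-\rho^{2}\ln(12/11)}$ for all large $\rho$. So no chain-of-balls refinement can produce $\nu_0^{\rho^2}$ there. The restriction $\rho\le1$ must be read as a tacit hypothesis of the lemma, as it is in the paper; it holds, for example, for the $\hat\rho$ of Remark \ref{remark 8.13.3}.

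What you should drop is the justification ``$\hat\rho$ may be replaced by $\hat\rho\wedge1$''. Condition \eqref{8.13.1} does not pass to smaller radii. If $\hat\rho>1$, the inclusion $C_1(t,x)\subset C_{\hat\rho}(t,x)$ only gives $\dashnorm b\|_{L_{(p,q)}(C_{1}(t,x))}\le \hat b\,\hat\rho^{\,d/p+2/q-1}$. When $d/p+2/q>1$, which the paper allows (cf. Remark \ref{remark 8.5.2}), this bound is attained: take $\hat\rho\equiv R>1$ and $b=\hat b R^{d/p+2/q-1}I_{C_{1}}$. Then \eqref{8.13.1} holds at radius $R$ but fails at radius $1$. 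The same objection applies to your alternative of using ``the condition at unit scale'' for points inside $B_\rho(x_\tau)$.
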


The first estimate is deduced from 
Lemma \ref{lemma 7.18.1} by using the self-similarity as in Remark \ref{remark 8.13.03}
and the second one follows from  H\"older's
inequality.

Next,  let $\tau^{n}$ be the first time
$x_{s}$ exits from $B_{R_{n+1}}$, $n\geq1$. 
\begin{lemma}
                        \label{lemma 8.16.3}
For $m=0,1,...$, $n_{1}\geq n_{0}$,
$|x_{0}|\leq R_{n_{1}}$ it holds that
\begin{equation}
                             \label{8.17.3}
P (\max_{s\leq T}|x_{s}|\geq R_{n_{1}+m+1})
\leq e^{ \lambda_{0}T}
\nu^{\mu(n_{1},m)}_{0}.
\end{equation}
\end{lemma}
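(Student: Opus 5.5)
We will prove \eqref{8.17.3} by chaining the exit times $\tau^{n}$, the successive exits from $B_{R_{n+1}}$, and applying the second estimate in \eqref{7.6.1} once per annulus. First observe that
$$
P(\max_{s\leq T}|x_{s}|\geq R_{n_{1}+m+1})\leq P(\tau^{n_{1}+m}\leq T)\leq e^{\lambda_{0}T}Ee^{-\lambda_{0}\tau^{n_{1}+m}}
$$
by Chebyshev's inequality. So it suffices to show
$$
Ee^{-\lambda_{0}\tau^{n_{1}+m}}\leq \nu_{0}^{\mu(n_{1},m)}.
$$

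Set $\sigma_{-1}=0$ and let $\sigma_{k}$ be the first time $x_{s}$ reaches $\partial B_{R_{n_{1}+k+1}}$, $k=0,\dots,m$, so that $\sigma_{k}=\tau^{n_{1}+k}$. Since $|x_{0}|\leq R_{n_{1}}$ and paths are continuous, at time $\sigma_{k-1}$ (for $k\geq1$) we have $|x_{\sigma_{k-1}}|=R_{n_{1}+k}$. For $k=0$ we have $|x_{0}|\leq R_{n_{1}}$, where we reduce to the worst case by first waiting until $|x|$ hits $R_{n_{1}}$, which only helps.

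Before the process exits $B_{R_{n_{1}+k+1}}$ from such a starting point, it must travel at least the distance
$$
R_{n_{1}+k+1}-R_{n_{1}+k}=\rho(R_{n_{1}+k})=\rho_{n_{1}+k}.
$$
Hence $\sigma_{k}-\sigma_{k-1}$ is at least the first exit time of $x_{\sigma_{k-1}+s}$ from $B_{\rho}(x_{\sigma_{k-1}})$ with $\rho=\rho_{n_{1}+k}$, which equals $\hat\rho(|x_{\sigma_{k-1}}|)$. Lemma \ref{lemma 8.13.2} applied with $\tau=\sigma_{k-1}$ then gives
$$
E_{\cF_{\sigma_{k-1}}}e^{-\lambda_{0}(\sigma_{k}-\sigma_{k-1})}\leq \nu_{0}^{\rho^{2}_{n_{1}+k}}.
$$
Here $\rho(x_{\tau})$ means $\hat\rho(|x_{\tau}|)$, and it is a deterministic number on this event because $|x_{\sigma_{k-1}}|=R_{n_{1}+k}$.

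Iterating by the tower property,
$$
Ee^{-\lambda_{0}\sigma_{m}}=E\Big[\prod_{k=0}^{m}e^{-\lambda_{0}(\sigma_{k}-\sigma_{k-1})}\Big]\leq\prod_{k=0}^{m}\nu_{0}^{\rho^{2}_{n_{1}+k}}=\nu_{0}^{\mu(n_{1},m)}.
$$
Infinite stopping times cause no trouble: on $\{\sigma_{k-1}=\infty\}$ all later factors vanish. Combining this with the Chebyshev bound yields \eqref{8.17.3}.

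The main obstacle is the first step, $k=0$, when $|x_{0}|<R_{n_{1}}$. Here the radius $\hat\rho(|x_{0}|)$ is not $\rho_{n_{1}}$, and a ball around $x_{0}$ of radius $\rho_{n_{1}}$ need not sit inside $B_{R_{n_{1}+1}}$ in the right way. I would handle this with an extra initial stopping time $\sigma_{-1}'$, the hitting time of $\partial B_{R_{n_{1}}}$; since $\sigma_{-1}'\geq 0$, it only decreases $e^{-\lambda_{0}\cdot}$. If $|x_{0}|=R_{n_{1}}$ already, this is trivial. A second check is that the inclusion of $B_{\rho_{n_{1}+k}}(x_{\sigma_{k-1}})$ in $B_{R_{n_{1}+k+1}}$, up to boundary, follows from the triangle inequality. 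One must also make sure that Lemma \ref{lemma 8.13.2} was stated with $\rho=\hat\rho(|x_{\tau}|)$ and not with some other function; the lemma is formulated exactly so that this works.
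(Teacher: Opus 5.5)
Your proposal is correct and follows the paper's route. Chebyshev's inequality reduces the claim to $Ee^{-\lambda_{0}\tau^{n_{1}+m}}\leq\nu_{0}^{\mu(n_{1},m)}$, which the paper obtains by tacitly iterating the second inequality in \eqref{7.6.1} over the successive exits from $B_{R_{n_{1}+k+1}}$; you do the same explicitly via the tower property. Your extra hitting time of $\partial B_{R_{n_{1}}}$ supplies a detail the paper leaves unstated: the first application of Lemma \ref{lemma 8.13.2} must be made at a point with $|x|=R_{n_{1}}$, so that the radius is $\rho_{n_{1}}$ rather than $\hat\rho(|x_{0}|)$.
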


Proof.
Owing to 
the second inequality in \eqref{7.6.1}  we get
$$
E e^{-\lambda_{0}\tau^{n_{1}+m}}\leq \nu^{\mu(n_{1},m)}_{0}.
$$
After that it only remains to use that
$$
P (\max_{s\leq T}|x_{s}|\geq R_{n_{1}+m+1})= P (\tau_{n_{1}+m}\leq T)
\leq e^{ \lambda_{0}T}
Ee^{-\lambda_{0}\tau^{n_{1}+m}}.
$$
\qed

\begin{lemma}
                        \label{lemma 8.17.1}
For $m=0,1,...$, $n_{1}\geq n_{0}$,
$|x_{0}|\leq R_{n_{1}}$, and Borel $f(t,x)\geq0$ vanishing for $|x|\geq R_{n_{1}+m}$ it holds that
$$
I:=E\int_{0}^{\infty}e^{-\lambda_{0}t}
f(t,x_{t})\,dt\leq N\|f\|_{L_{(p,q)}},
$$
where $N$ depends only on $d,\delta,p,q,R_{n_{1}+m+1},R_{n_{1}}$.
\end{lemma}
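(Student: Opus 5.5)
The plan is to reduce to the machinery of Section \ref{section 8.2.1}, which was built under Assumption \ref{assumption 7.28.1}, by working inside a large ball where the drift is uniformly controlled at a fixed small scale. Set $R'=R_{n_{1}+m+1}$. On $B_{2R'}$ the function $\hat\rho$ is bounded below by $\hat\rho_{0}(2R')>0$ because of \eqref{8.15.6}. Together with \eqref{8.13.1} and Corollary \ref{corollary 8.16.1}, this gives a scale $r\in(0,1]$, depending only on $R'$ and $K_{0}$, such that every cylinder $C_{r}(t,x)$ with $|x|\le R'+1$ satisfies $\dashnorm b\|_{L_{(p,q)}(C_{r}(t,x))}\le N(d,\hat\rho_{0}(2R'))\,\hat b\, r^{-1}$. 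To get this, cover $C_{r}(t,x)$ by the cylinder $C_{\hat\rho}(t,x)$ and compare averages; if the constant is too large, use a smaller $\hat b$ in the statement.

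Rescaling by $r$ as in Section \ref{section 9.12.1} then puts us in the situation of Lemmas \ref{lemma 7.18.1}--\ref{lemma 7.27.1}, but only for starting points in $B_{R'+1}$. To make those lemmas literally applicable, I would modify the drift. Let $\tilde b=bI_{B_{R'+1}}$ and let $\tilde x_{t}$ coincide with $x_{t}$ up to $\gamma$, the first exit time from $B_{R'}$. The simplest way to get $\tilde x_{t}$ is to note that $f$ vanishes outside $B_{R_{n_{1}+m}}$, so we only need the process before it leaves $B_{R'}$.

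The integral then splits at $\gamma$:
$$
I\le E\int_{0}^{\gamma}e^{-\lambda_{0}t}f(t,x_{t})\,dt+E\,e^{-\lambda_{0}\gamma}I_{\gamma<\infty}E_{\cF_{\gamma}}\int_{0}^{\infty}e^{-\lambda_{0}s}f(\sft_{\gamma+s},x_{\gamma+s})\,ds .
$$
For the first term, stopped at $\gamma$, the process only sees $\tilde b$, and $\tilde b$ satisfies the rescaled Assumption \ref{assumption 7.28.1}. So Theorem \ref{theorem 7.29.1}, or Lemma \ref{lemma 7.27.1} after rescaling, gives a bound $N\|f\|_{L_{(p,q)}}$. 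Here the constant depends on $r$ and hence on $R'$, and the rescaling changes $\lambda_{0}$ to $\lambda_{0}r^{2}$; this is harmless since $e^{-\lambda_{0}t}\le e^{-\lambda_{0}r^{2}t}$.

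For the second term, the plan is an iteration exactly like the proof of Lemma \ref{lemma 7.28.1}. Take $M=\sup_{\tau}\esssup$ of the conditional integral over stopping times $\tau$ with $|x_{\tau}|\le R'$ (first taking $f$ bounded so that $M<\infty$). Go from a point on $\partial B_{R'}$ to the next hitting of $\bar B_{R_{n_{1}+m}}$. Between these two times $f$ contributes nothing. The key estimate is that, for a path started on $\partial B_{R_{n_{1}+m}}$, the expected discount $E\,e^{-\lambda_{0}\gamma}$ up to exit from $B_{R'}$ is at most $\nu<1$. By the second inequality in \eqref{7.6.1} with radius $\rho(R_{n_{1}+m})=R'-R_{n_{1}+m}$, one gets $\nu=\nu_{0}^{\rho_{n_{1}+m}^{2}}$. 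This yields $M\le N_{1}\|f\|+\nu M$, hence $M\le (1-\nu)^{-1}N_{1}\|f\|_{L_{(p,q)}}$, with a constant depending on $R_{n_{1}}$ and $R'$ as claimed. Strictly speaking, the iteration must start from points inside $B_{R_{n_{1}+m}}$, so I would set up $M$ over stopping times at which $x_{\tau}\in\bar B_{R_{n_{1}+m}}$, using $|x_{0}|\le R_{n_{1}}\le R_{n_{1}+m}$.

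The main obstacle is the first term: the local estimate up to $\gamma$ with the modified drift. Lemma \ref{lemma 7.27.1} assumes the global smallness condition, so I need to check that the stopped process satisfies the hypotheses. Theorem 1.1.12 of \cite{Kr_26} and Lemma \ref{lemma 7.27.2} are stated for solutions with drift satisfying $\hat b_{\rho}<\infty$, and only Corollary \ref{corollary 8.16.1} is available for that. My first attempt would be to apply Theorem \ref{theorem 7.29.1} directly with $\rho=2R'$ and $y=0$. It needs only $\hat b_{\rho}<\infty$, which Corollary \ref{corollary 8.16.1} provides, and it bounds $E\int_{0}^{\tau_{2R',0}}e^{-\lambda_{0}s}f\,ds\le N\|f\|_{L_{(p,q)}}$ with $N$ depending on $R'$ and on $\hat b_{2R'}$. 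Because of the normalization of the averages, $\hat b_{2R'}$ is itself controlled through \eqref{8.13.1} and $\hat\rho_{0}$ on $B_{3R'}$, so it depends only on $R'$. This would bypass the rescaling entirely, and then the iteration above closes the argument.
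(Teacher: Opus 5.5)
Your final route is the paper's proof. The paper likewise bounds the contribution before the exit from $B_{R_{n_1+m+1}}$ by Theorem \ref{theorem 7.29.1} together with Corollary \ref{corollary 8.16.1}, uses that $f$ vanishes until the next return to $\bar B_{R_{n_1+m}}$, and gets the factor $\nu_0^{\rho_{n_1+m}^2}$ per excursion from the second inequality in \eqref{7.6.1}; it sums the resulting geometric series over the successive times $\gamma^n,\tau^n$ explicitly, where you close with the $\sup\esssup$ device of Lemma \ref{lemma 7.28.1}, so your rescaling/modified-drift detour is not needed. The one step you leave implicit, the discount bound from interior points of $\bar B_{R_{n_1+m}}$, follows at once because the path must cross $\partial B_{R_{n_1+m}}$ before leaving $B_{R_{n_1+m+1}}$ (alternatively, restrict the supremum to times with $|x_\tau|=R_{n_1+m}$ and bound the first excursion from $x_0$ by $1$, as the paper does with $\gamma^0=0$).
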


Proof. Introduce recursively $\gamma^{0}=0$, $\tau^{1}$
as the first exit time of $x_{t}$ from
$B_{R_{n_{1}+m+1}}$, $\gamma^{1}$ as the first
time after $\tau^{1}$ when $x_{t}$ hits
$\bar B_{R_{n_{1}+m}}$, $\tau^{n}$ as the first 
time after $\gamma^{n-1}$ when $x_{t}$ exits from
$ B_{R_{n_{1}+m+1}}$, $\gamma^{n}$ as the first
time after $\tau^{n}$ when $x_{t}$ hits
$\bar B_{R_{n_{1}+m+1}}$. Then
$$
I=E\sum_{n=0}^{\infty}E_{\cF_{\gamma^{n}}}
\int_{\gamma^{n}}^{\tau^{n+1}}
e^{-\lambda_{0}t}
f(t,x_{t})\,dt
$$
$$
=E\sum_{n=0}^{\infty}e^{-\lambda_{0}\gamma^{n}}E_{\cF_{\gamma^{n}}}
\int_{0}^{\tau^{n+1}-\gamma^{n}}
e^{-\lambda_{0}t}
f(\gamma^{n}+t,x_{\gamma^{n}+t})\,dt.
$$
Here the interior conditional expectations are
dominated by $N\|f\|_{L_{(p,q)}}$ owing to Theorem \ref{theorem 7.29.1} (and Corollary
\ref{corollary 8.16.1}). Furthermore,
$$
E e^{-\lambda_{0}\gamma^{n}}=
E e^{-\lambda_{0}\gamma^{n-1}}E_{\cF_{\tau^{n}}}e^{-\lambda_{0}(\gamma^{n}-\tau^{n})},
$$
where the conditional expectation is dominated
by $\nu_{0}^{\rho^{2}}$, with $\rho=\rho _{n_{1}+m } $, in light of Lemma
\ref{lemma 8.13.2}. Hence,
$$
E e^{-\lambda_{0}\gamma^{n}}\leq \nu_{0}^{\rho^{2}}
E e^{-\lambda_{0}\gamma^{n-1}}\leq \nu_{0}^{ \mu(n_{1},m)},
$$
and our assertion follows. \qed

\mysection{Proof of Theorem \ref{theorem 6.19.01}}  {}

First, we make a crucial step proving
and analog of Theorem \ref{theorem 9.6.4}
when Assumption \ref{assumption 7.28.1} is replaced
with \eqref{8.13.1}.

\begin{theorem}
                       \label{theorem 8.17.1}
In our new situation Theorem \ref{theorem 9.6.4} still holds true
if we suppose that \eqref{8.13.1}
is satisfied instead of Assumption \ref{assumption 7.28.1}, which will change Assumption
\ref{assumption 8.16.1} accordingly.
\end{theorem}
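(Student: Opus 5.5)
The plan is to rerun the proof of Theorem \ref{theorem 9.6.4} step by step and check where Assumption \ref{assumption 7.28.1} was actually used. It enters in three places only.

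\emph{First}, it gives the tightness bound \eqref{8.16.6} for $\sup_{s\le T}|x^{(n)}_s|$.

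\emph{Second}, it gives the local integrability estimate \eqref{8.14.3}. This came from Theorem \ref{theorem 7.31.1}, and in Step 3 it was used in the form \eqref{4.26.1}.

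\emph{Third}, it supplies the approximating solutions in the proof of assertion (i).

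Under \eqref{8.13.1}, each of these will be replaced by its counterpart from Section \ref{section 8.16.1}. All constants there depend only on $d,\delta,p,q,K_0$, on the function $\hat\rho$ through the sequence $R_n$, and on $\sup_n|x^{(n)}_{(0)}|$. Since $x^{(n)}_{(0)}$ converge, we can fix a single $n_1\geq n_0$ with $|x^{(n)}_{(0)}|\le R_{n_1}$ for all $n$, and then every constant is uniform in $n$.

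\textbf{Step 1 (tightness).} Instead of \eqref{8.16.6}, use Lemma \ref{lemma 8.16.3}:
$$
P^n\big(\sup_{s\le T}|x^{(n)}_s|\ge R_{n_1+m+1}\big)\le e^{\lambda_0T}\nu_0^{\mu(n_1,m)}.
$$
By Lemma \ref{lemma 8.16.1}, $\mu(n_1,m)\ge K_0\ln\ln R_{n_1+m+1}-K_0\ln\ln R_{n_1}\to\infty$ as $m\to\infty$, and $R_n\to\infty$. So the right-hand side goes to zero uniformly in $n$, which is exactly what is needed for \eqref{5.10.5}, \eqref{5.10.6}, and \eqref{8.14.1}.

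\textbf{Step 1 (integrability).} Instead of \eqref{8.14.3}, use Lemma \ref{lemma 8.17.1}, applied to $f=|b^{(n)}|I_{|b^{(n)}|\ge M}I_{|x|<R}I_{(-S,S)}(t)$ with $m$ chosen so that $R\le R_{n_1+m}$. This bounds the truncated expectation by $N\|b^{(n)}I_{|b^{(n)}|\ge M}\|_{L_{(p,q)}(C)}$ for a fixed bounded cylinder $C$. It is small for large $M$, uniformly in $n$, because $b^{(n)}\to b$ in $L_{(p,q)}(C)$. The Skorokhod embedding of Step 2 then goes through unchanged.

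\textbf{Step 3.} The global weighted bound \eqref{4.26.1} is not available, and I replace it with a localized version:
$$
E\int_0^T f(t,\hat x^{(n)}_t)\,dt\le N(R)\,\|fI_{(-S,S)}I_{B_R}\|_{L_{(p,q)}}+T\,\|f\|_{\infty}\,P\big(\sup_{t\le T}|\hat x^{(n)}_t|\ge R\big).
$$
For unbounded $f$ I instead split on the event $\{\sup_{t\le T}|\hat x^{(n)}_t|<R\}$, which has probability close to one uniformly in $n$ by Step 1. For the stochastic integrals $I_n$ this works without any bound on $f$: on the good event, $\sigma-\alpha$ can be replaced by $(\sigma-\alpha)I_{B_R}$, and $\sigma-\alpha$ is bounded anyway. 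For the drift term, write $b^{(n)}-\beta$ and argue on the good event with $f=|b^{(n)}-\beta|I_{B_R}$, using Lemma \ref{lemma 8.17.1}; the bad event has small probability. The bound for $n=0$ follows as before by passing to the limit in probability, for $f$ continuous with compact support. With these substitutions the limit passage in \eqref{5.12.1} is identical to the original.

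\textbf{Assertion (i).} Take $b^{(n)}=\zeta(t/n,x/n)b$ with $\zeta\in C_0^\infty$, $0\le\zeta\le1$, and $\zeta=1$ near the origin. I expect this to be the main obstacle. Each $b^{(n)}$ satisfies \eqref{8.13.1}, since $|b^{(n)}|\le|b|$. By Corollary \ref{corollary 8.16.1}, $b$ is in $L_{(p,q)}$ on every bounded cylinder, so $b^{(n)}\in L_{(p,q)}$ and Theorem \ref{theorem 6.19.10} together with Remark \ref{remark 8.5.1} supply the solutions required in Assumption \ref{assumption 8.16.1}. Moreover $b^{(n)}\to b$ in $L_{(p,q)}(C)$ for every $C\in\bC$. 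The delicate part is that the constants from Section \ref{section 8.16.1} must not depend on the specific $b^{(n)}$. They depend only through \eqref{8.13.1} and Corollary \ref{corollary 8.16.1}, and the quantity $\hat b_\rho$ for $b^{(n)}$ is dominated by the one for $b$. So the constants are uniform, and (ii) then yields (i).
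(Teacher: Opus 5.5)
\textbf{Gap: Step 3 does not localize the limit process $\hat x^{(0)}$.} Your Step 1 substitutions (Lemma \ref{lemma 8.16.3} for \eqref{8.16.6}, Lemma \ref{lemma 8.17.1} for \eqref{8.14.3}), the good-event splitting in Step 3, and your treatment of (i) all agree with the paper.

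In Step 3 you must also localize the $n=0$ terms. These are $I_{0}$ and $\int_0^T(b^{(0)}-\beta)(t^{(0)}_0+s,\hat x^{(0)}_s)\,ds$. You do this on the event $\{\sup_{t\le T}|\hat x^{(0)}_t|<R\}$, which you say has probability close to one ``uniformly in $n$ by Step 1''. Step 1 says nothing about $n=0$. Lemmas \ref{lemma 8.16.3} and \ref{lemma 8.17.1} apply to $\hat x^{(n)}$, $n\ge1$, only because these are continuous solutions of \eqref{5.12.1}. At this stage $\hat x^{(0)}$ is merely an $(\omega,t)$-measurable process: it is obtained from limits in probability at $t\in\cS$ and set to $0$ off $\cS$. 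It is not known to be continuous or to solve anything, so its pathwise supremum is not controlled.

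\textbf{Why your expectation bound does not repair this.} Passing to the limit in your bound for continuous $f$ only gives control of the form $\|f\|_\infty\int_0^T P(|\hat x^{(0)}_t|\ge R)\,dt$. That would be enough for the stochastic integral, since $\sigma-\alpha$ is bounded and $E|\int_0^T g\,d\hat w^{(0)}_s|^2=E\int_0^T|g|^2\,ds$. It is not enough for the drift. Under \eqref{8.13.1}, $b^{(0)}$ may be unbounded off $B_R$ (linear growth is allowed). To discard $\int_0^T|b^{(0)}-\beta|\,I_{|\hat x^{(0)}_s|\ge R}\,ds$, and even to know that $\int_0^T|b(t^{(0)}_0+s,\hat x^{(0)}_s)|\,ds<\infty$, you need an event-wise statement. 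It is: with probability close to one, $|\hat x^{(0)}_s|<R$ for a.e.\ $s\le T$.

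\textbf{How the paper supplies it.} This is the one genuinely new ingredient in the paper's proof. Bounds on $P(\max_i|\hat x^{(0)}_{t_i}|\ge\rho)$ over finite sets of times are inherited from the $\hat x^{(n)}$ through convergence in probability on $\cS$. Lemma \ref{lemma 8.17.3} turns these into a bound on $P(\esssup_{s\le T}|\hat x^{(0)}_s|\ge\rho)$. This gives $\lim_{\rho\to\infty}\sup_{n\ge0}P(\esssup_{s\le T}|\hat x^{(n)}_s|\ge\rho)=0$, so the splitting $I_n\le I_{n,\rho}+J_{n,\rho}$ (and its analogue for the drift) works for $n=0$ too.

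\textbf{An equivalent fix within your framework.} For continuous $0\le g\le1$ vanishing on $B_R$ you have $P(\int_0^T g(\hat x^{(n)}_s)\,ds>\eta)\le P(\sup_{s\le T}|\hat x^{(n)}_s|\ge R)$ for every $\eta>0$. Pass to the limit in probability, let $\eta\downarrow0$, and then let $g\uparrow I_{\{|x|>R\}}$.
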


We need the following
in which $\cB(\bR)$ is the Borel $\sigma$-field
on $\bR$.
\begin{lemma}
                        \label{lemma 8.17.3}
Let $\alpha\in(0,1)$ and let $\xi(t)\geq0,t\in\bR$, be a $\cF\otimes \cB(\bR)$-measurable function such that for any $n\ge1$
and $t_{1},...,t_{n}\in[0,1]$ we have
$$
P(\max_{i\leq n}\xi(t_{i})\geq1)\leq \alpha.
$$
Then
$$
P(\esssup_{[0,1]} \xi(t)\geq1)\leq \alpha.
$$
\end{lemma}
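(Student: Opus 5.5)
The plan is to randomize the finite set of times, average the hypothesis over that randomization, and let $n\to\infty$. Set $A=\{(\omega,t):\xi(t)\geq1\}\subset\Omega\times[0,1]$. It is $\cF\otimes\cB(\bR)$-measurable, so by Fubini's theorem $\lambda(\omega):=|\{t\in[0,1]:\xi(t)\geq1\}|$ is an $\cF$-measurable random variable.

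First, take an auxiliary probability space carrying independent random variables $\theta_{1},\theta_{2},\ldots$, each uniform on $[0,1]$, and work on the product space. For every deterministic choice $t_{i}=\theta_{i}$ the hypothesis gives $P(\max_{i\leq n}\xi(t_{i})\geq1)\leq\alpha$. Integrating this over the law of $(\theta_{1},\ldots,\theta_{n})$, which is legitimate by joint measurability and Fubini, we get
$$
\alpha\geq E\Big[1-\big(1-\lambda(\omega)\big)^{n}\Big].
$$
Here we used that, for fixed $\omega$, each $\theta_{i}$ misses $\{\xi\geq1\}$ independently with probability $1-\lambda(\omega)$. Letting $n\to\infty$ and using monotone convergence yields $P(\lambda>0)\leq\alpha$.

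Second, we pass from $\{\lambda>0\}$ to the event in the statement. For every $\varepsilon>0$ we have
$$
\Big\{\esssup_{[0,1]}\xi>1\Big\}\subset\Big\{\big|\{t:\xi(t)\geq1+\varepsilon\}\big|>0\ \text{for some }\varepsilon\Big\}\subset\{\lambda>0\}.
$$
Hence $P(\esssup_{[0,1]}\xi>1)\leq\alpha$. The same conclusion holds for the event that the set $\{\xi\geq1\}$ has positive measure, which is exactly $\{\lambda>0\}$.

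The main obstacle is the boundary event $\{\esssup_{[0,1]}\xi=1,\ \lambda=0\}$. I would try to absorb it by applying the first step to $\xi/(1-\varepsilon)$. That does not work, because the hypothesis is given only at threshold $1$. In fact this gap cannot be closed from the stated hypotheses alone. Take the deterministic function $\xi(t)=1-t$ for $t>0$ and $\xi(0)=0$. Every finite maximum is $<1$, so the hypothesis holds with any $\alpha$, yet $\esssup_{[0,1]}\xi=1$ surely, so $P(\esssup_{[0,1]}\xi\geq1)=1>\alpha$.

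So the plan is to establish the lemma in the form
$$
P\Big(\esssup_{[0,1]}\xi>1\Big)\leq P\big(|\{t\in[0,1]:\xi(t)\geq1\}|>0\big)\leq\alpha.
$$
This appears to be what is needed downstream, where the lemma is presumably applied with $\xi$ such as $|x_{t}|/R$ for increasing $R$, so a strict versus non-strict threshold is harmless. The non-strict form $\geq1$ as literally worded would require an extra hypothesis, for instance that the hypothesis holds at every threshold $c<1$, or upper semicontinuity of $\xi$ from the relevant side. I would flag this explicitly rather than claim the literal statement.
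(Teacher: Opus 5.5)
Your counterexample is correct once $\xi$ is made nonnegative on all of $\bR$, e.g.\ $\xi(t)=(1-t)I_{0<t\le 1}$. So the lemma as literally stated is false, and the paper's proof breaks exactly at the boundary event you isolate. Its two limit passages are the a.s.\ convergence $I_{n,p}(s)\to I_{p}$ and then $I_{p}\uparrow\esssup_{[0,1]}\xi$, and both only control strict events. In your example $I_{p}=(p+1)^{-1/p}<1$ for every $p$, while $\esssup_{[0,1]}\xi=1$.

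As you note, the lemma's only use is to show $\sup_{n}P(\esssup_{s\le T}|x^{(n)}_{s}|\ge\rho)\to0$ as $\rho\to\infty$. That needs only the strict form.

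One side remark should be dropped: semicontinuity of $\xi$ in $t$ does not rescue the non-strict form. Take $\xi(t)=(1-|t-U|)_{+}$ with $U$ uniform on $[0,1]$. It is continuous, satisfies the hypothesis for every $\alpha$, and has $\esssup_{[0,1]}\xi=1$ surely. Your other remedy, assuming the hypothesis at every level $c<1$, does work.

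Your proof of the corrected statement is sound and takes a different route from the paper. The paper proceeds as follows:
\begin{itemize}
\item it truncates to $\xi\le2$ and discretizes along shifted dyadic grids $\kappa_{n}(t+s)-s$;
\item it bounds $I_{n,p}(s)=\big(\int_{0}^{1}\xi^{p}(\kappa_{n}(t+s)-s)\,dt\big)^{1/p}$ by a maximum over finitely many grid points;
\item it invokes Doob's theorem that these discretizations converge to $\big(\int_{0}^{1}\xi^{p}\,dt\big)^{1/p}$ for a.e.\ shift $s$, and selects a good $s$ by Fubini;
\item it finally lets $p\to\infty$.
\end{itemize}
Passing through $L_{p}$-norms is what makes the conclusion concern $\esssup$. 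Through the limits, it then yields only $P(\esssup_{[0,1]}\xi>1)\le\alpha$.

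You instead average the hypothesis over i.i.d.\ uniform sample times. Tonelli turns it into $E[1-(1-\lambda)^{n}]\le\alpha$, and monotone convergence finishes. This has several advantages:
\begin{itemize}
\item it uses nothing beyond Fubini;
\item the sample points stay inside $[0,1]$, whereas the paper's $\kappa_{n}(t+s)-s$ can be slightly negative;
\item it needs the hypothesis only for almost every $(t_{1},\dots,t_{n})$;
\item it gives the formally stronger bound $P(|\{t\in[0,1]:\xi(t)\ge1\}|>0)\le\alpha$, an event lying between $\{\esssup_{[0,1]}\xi>1\}$ and $\{\esssup_{[0,1]}\xi\ge1\}$.
\end{itemize}
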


Proof. We may assume that $\xi(t)\leq2$. Let $\kappa_{n}(t)=2^{-n}\lfloor t2^{n}\rfloor $,
$n=1,2,...$,
and for $s\in[0,1]$, $p\in(1,\infty)$ consider the functions
$$
I_{n,p}(s)=I_{n,p}(\omega,s) =\Big(\int_{0}^{1}
\xi^{p}(\kappa_{n}(t+s)-s)\,dt\Big)^{1/p}.
$$ 
By taking into account that $\kappa_{n}$ is piece-wise constant, one easily sees that
$I_{n}(\omega,s)$ is measurable with respect to
$(\omega,s)$ and
\begin{equation}
                              \label{8.18.2}
P(I_{n,p}(s)\geq1)\leq\alpha.
\end{equation}
 By one of   Doob's theorem, for each $\omega$,
\begin{equation}
                              \label{8.18.1}
I_{n,p}(s)\to\Big(\int_{0}^{1}\xi^{p}(t)
\,dt\Big)^{1/p}=:I_{p}
\end{equation}
as $n\to\infty$ for almost all $s\in[0,1]$. Then by Fubini's theorem
there exists $s\in[0,1]$ such that \eqref{8.18.1} hods almost surely. This and \eqref{8.18.2} imply that $P(I_{p}\geq 1)\leq\alpha$.
Now it only remains to recall that
$$
I_{p}\uparrow \esssup_{[0,1]} \xi(t)
$$
as $p\to\infty$ for any $\omega$. \qed

{\bf Proof of Theorem \ref{theorem 6.19.01}}. First we almost literally follow the proof of Theorem \ref{theorem 9.6.4} and only discuss
the necessary changes. In Step 1 we only need
to replace \eqref{8.16.6} with \eqref{8.17.3}
written for $x^{(n)}_{s}$. Step 2 needs no changes. In Step 3 we first observe that
owing to \eqref{8.17.3} and Lemma \ref{lemma 8.17.3}
$$
\lim_{\rho\to\infty}\sup_{n\geq0}
P(\esssup_{s\leq T}|x^{(n)}_{s}|\geq\rho)=0
$$
for any $T\in(0,\infty)$.
Then we
change the treatment
of $I_{n}$ writing $I_{n}\leq I_{n,\rho}+J_{n,\rho}$,
where
$$
I_{n,\rho}=P\Big(\Big|\int_{0}^{T}(\sigma-\alpha)(t^{(n)}_{0}+s,\hat x_{s}^{(n)})
I_{|\hat x_{s}^{(n)}|\leq\rho}\,d\hat w^{(n)}_{s}
  \Big|\geq \varepsilon/2
\Big),
$$
\begin{equation}
                              \label{8.18.3}
J_{n,\rho}=P(\esssup_{s\leq T}|x^{(n)}_{s}|\geq\rho).
\end{equation}
For any $\rho$ the term
$I_{n,\rho}$ can be made as small as we like
on account of choosing $\alpha$ appropriately.
This is shown as in Step 3 on the basis of
Lemma \ref{lemma 8.17.1} in place of \eqref{4.26.1}.

Treating the second integral in \eqref{5.12.1}
 also requires only a small change. Observe that
similarly to \eqref{8.18.3}
$$
P\Big(\int_{0}^{T}|(b^{(n)}-\beta)
(t^{(n)}_{0}+s,
\hat x^{(n)}_{s})|\,ds>2\varepsilon\Big)
$$ 
$$
\leq P\Big(\int_{0}^{T}|(b^{(n)}-\beta)
(t^{(n)}_{0}+s,
\hat x^{(n)}_{s})|I_{|\hat x_{s}^{(n)}|\leq\rho}\,ds>2\varepsilon\Big)+J_{n,\rho}
$$
and one can repeat the same
as above about this estimate.

This leads to an obvious counterpart
of Theorem \ref{theorem 9.6.4} in our new setting.  
Now Theorem \ref{theorem 6.19.01}  
is proved by repeating word for word the
corresponding   part of the
proof of Theorem 1.9.1 of \cite{Kr_26} using
Theorem \ref{theorem 8.17.1} in place of Theorem 1.6.1 of \cite{Kr_26}.

\mysection{An example}

In the one space dimension for $\alpha>0$ define $b(x)= x_{+}\ln^{1+\alpha}(1+|x|)$ and consider the equation
\begin{equation}
                           \label{8.9.1}
x_{t}=2+w_{t}+\int_{0}^{t}b(x_{s})\,ds.
\end{equation}
Set
$y_{t}=x_{t}-w_{t} $, so that
$$
y_{t}=2+\int_{0}^{t}b(y_{s}+w_{s})\,ds.
$$
Define
$$
T=\int_{2}^{\infty}\frac{1}{b(x-1)}\,dx.
$$
Observe that, if the event $\{\min_{[0,T]}w_{s}>-1\}$
occurs,
then on $[0,T]$
$$
y_{t}\geq 2+\int_{0}^{t}b(y_{s}-1)\,ds.
$$ 
It follows that $y_{t}\geq z_{t}$, where
$z_{t}$ is the solution of
$$
z_{t}= 2+\int_{0}^{t}b(z_{s}-1)\,ds
$$
(cf. the argument on page 79 of \cite{Kr_77}).
Obviously, $z_{t}>2$, so that $b(z_{s}-1)\geq b(1)>0$ and $z_{t}$ satisfies
$$
\int_{z_{t}}^{\infty}\frac{1}{b(x-1)}\,dx=
T-t.
$$
 It follows that $y_{t}\geq z_{t}\to\infty$
as $t\uparrow T$, so that \eqref{8.9.1}
does not have solutions defined for all $t\geq0$.

Now let us see how much condition \eqref{8.13.1} is violated. For $x\geq1$ we set $\hat \rho(x)=K_{0}x^{-1}$. In that case, for $x\geq1$
$$
\dashnorm b\|^{p}_{L_{(p,q)}(C_{  
\hat\rho}(t,x))}=(1/2)\hat \rho^{-1}(x)
\int_{x-\hat\rho(x)}^{x+\hat\rho(x)}b^{p}(y)\,dy\geq b^{p}(x-\hat\rho(x))
$$
implying that
$$
\dashnorm b\| _{L_{(p,q)}(C_{  
\hat\rho}(t,x))}\geq b (x-\hat\rho(x))
=\hat\rho^{-1}(x)\hat b(x),
$$
where $\hat b(x)=K_{0}^{-1}b (x-\hat\rho(x))/(x
\ln x)
\sim K_{0}^{-1}\ln^{ \alpha}x$ as $x\to \infty$. We see
that in condition \eqref{8.13.1} we cannot allow
$\hat b$ to grow to infinity even with such a slow rate.

\end{document}